\newcommand{\ol}{\overline}
\newcommand{\pa}{\partial}
\DeclareMathOperator\tr{tr}
\DeclareMathOperator\id{id}
\DeclareMathOperator\rank{rank}
\DeclareMathOperator\dist{dist}
\DeclareMathOperator\vol{vol}
\DeclareMathOperator\dvol{dvol}
\DeclareMathOperator\End{End}
\DeclareMathOperator\loc{loc}
\DeclareMathOperator\HE{HE}
\DeclareMathOperator\AHE{AHE}
\begin{document}
\newcounter{remark}
\newcounter{theor}
\setcounter{remark}{0}
\setcounter{theor}{1}
\newtheorem{theorem}{Theorem}[section]
\newtheorem{lemma}[theorem]{Lemma}
\newtheorem{corollary}[theorem]{Corollary}
\newtheorem{corollarys}[theorem]{Corollary}
\newtheorem{proposition}[theorem]{Proposition}
\newtheorem{claim}[theorem]{Claim}
\newtheorem{question}[theorem]{question}
\newtheorem{defn}[theorem]{Definition}
\newtheorem{examp}[theorem]{Example}
\newtheorem{assumption}{Assumption}[section]
\newtheorem{case}{Case}
\newtheorem*{maintheorem}{Main Theorem}
\newtheorem{remark}[theorem]{Remark}
\newtheorem*{theorem1}{Theorem}
\numberwithin{equation}{section}

\title[Hermitian-Einstein equations on noncompact manifolds]
{Hermitian-Einstein equations on \\ noncompact manifolds}
\author{Di Wu}
\address{Di Wu, School of Mathematics and Statistics, Nanjing University of Science and Technology, Nanjing 210094, People's Republic of China}
\email{wudi@njust.edu.cn}
\author{Xi Zhang}
\address{Xi Zhang, School of Mathematics and Statistics, Nanjing University of Science and Technology, Nanjing 210094, People's Republic of China}
\email{mathzx@njust.edu.cn}
\subjclass[]{53C07, 14J60}
\keywords{Hermitian-Einstein equation, Noncompact manifold, Uniqueness, Stability.}
\thanks{The research is supported by the National Key R\&D Program of China 2020YFA0713100. Both authors are partially supported by the National Natural Science Foundation of China No.12141104 and No.12431004. The first author is also supported by the project funded by China Postdoctoral Science Foundation 2023M731699, the Jiangsu Funding Program for Excellent Postdoctoral Talent 2022ZB282, and the Natural Science Foundation of Jiangsu Province No.BK20230902.}
\maketitle
\begin{abstract}
	This paper first investigates solvability of Hermitian-Einstein equation on a Hermitian holomorphic vector bundle on the complement of an arbitrary closed subset in a compact Hermitian manifold. The uniqueness of Hermitian-Einstein metrics on a Zariski open subset in a compact K\"{a}hler manifold was only figured out by Takuro Mochizuki recently, for this model the second part of this paper gives an affirmative answer to a question proposed by Takuro Mochizuki and it leads to an alternative approach to the unique issue. We also prove stability from solvability of Hermitian-Einstein equation, which together with the classical existence result of Carlos Simpson in particular establish a Kobayashi-Hitchin bijective correspondence. The argument is also effective in more general settings, including basic models of Takuro Mochizuki, as well as non-K\"{a}hler and semi-stable contexts.
\end{abstract}
\section{Introduction}
\par Let $(E,\ol\pa_E)$ be a holomorphic vector bundle on a compact K\"{a}hler manifold $(N,J,g)$, the Hermitian-Einstein (abbreviated as $\HE$, also called Hermitian-Yang-Mills because of its relation to Yang-Mills theory) equation reads as
\begin{equation}\begin{split}\label{HE}
		\Lambda F_H^\perp\triangleq\Lambda F_H-\frac{\tr\Lambda F_H}{\rank(E)}\id_E=0,
\end{split}\end{equation}
where $\Lambda$ is the operator obtained as the adjoint of the multiplication of the associated fundamental form $\omega_g=g(J(\cdot),\cdot)$, and $F_H$ denotes the Chern curvature of a Hermitian metric $H$ on $(E,\ol\pa_E)$. The $\HE$ equation is one of the most important nonlinear partial differential equations in complex geometry and solving such an equation provides an effective approach to construct Yang-Mills connections on holomorphic vector bundles.  A Hermitian metric satisfying the $\HE$ equation will be called a $\HE$ metric and a holomorphic vector bundle will be called indecomposable if it is not a direct sum of serval holomorphic vector bundles. Thanks to a series of works
including those of Narasimhan-Seshadri \cite{NS1965}, Kobayashi \cite{Ko1982}, L\"{u}bke \cite{Lu1983}, Metha-Ramanathan \cite{MR1984}, Donaldson \cite{Do1985,Do1987} and Uhlenbeck-Yau \cite{UY1986}, the celebrated Kobayashi-Hitchin correspondence (also called Hitchin-Kobayashi correspondence or Donaldson-Uhlenbeck-Yau correspondence) states that for an indecomposable holomorphic vector bundle, the existence and uniqueness of $\HE$ metrics is equivalent to the Mumford-Takemoto stability. It is a landmark in complex geometry and has been generalized in many different directions, the large majority of which themselves became very important results in complex geometry. 
\par Furthermore, the semi-stable Kobayashi-Hitchin correspondence \cite{Ja2014,Ko1987,LZ2015} indicates the equivalence between the existence of an approximate Hermitian-Einstein (abbreviated as $\AHE$) structure and the Mumford-Takemoto semi-stability, where an $\AHE$ structure is a family of Hermitian metrics $\{H_\epsilon\}_{\epsilon>0}$ such that the $\HE$ equation can be solved in the sense of
\begin{equation}\begin{split}\label{HE2}
		\sup\limits_{N}|\Lambda F_{H_\epsilon}^\perp|_{H_\epsilon}\xrightarrow{\epsilon\rightarrow0}0.
\end{split}\end{equation}
In fact, to define stability and semi-stability, it is enough to pick a Gauduchon metric (that is, $\pa\ol\pa\omega_g^{\dim_{\mathbb{C}}N-1}=0$) as the reference metric. By a result of Gauduchon \cite{Ga1977}, there exists on any compact complex manifold a Gauduchon metric in a fixed conformal class. Therefore, the stability and semi-stability also make sense on compact Hermitian manifolds and the above two mentioned results on $\HE$ equations also hold on non-K\"{a}hler manifolds \cite{Bu1988,LY1987,NZ2018} indeed. For more related descriptions on this problem, the reader may also consult \cite{LT1995,LT2006}.
\par It is a natural to ask under what assumptions $\HE$ equations can be solved on noncompact K\"{a}hler manifolds and this question had been systematically investigated in various contexts, see \cite{Ba1993,LW1999,MS2010,Mo2020,Ni2002,NR2001,Si1988,WZpreprint,ZZZ2021,Zh2022} and references therein. From both the theoretic point of view and that of applications, it is important and highly desirable to establish a general existence theory for $\HE$ equations on noncompact spaces with a few technical assumptions as possible, so that it covers a wide range of applications in different areas. 
\par A first purpose of this paper is to study the above issue on a new but natural class of noncompact manifolds. We first have the following result.
\begin{theorem}\label{thm1}
	Let $(M,J)$ be a compact complex manifold and $\Sigma\subset M$ be an arbitrary closed subset. Suppose $g$ is a Hermitian metric on $M\setminus\Sigma$ equivalent to a Riemannian metric on $M$ and $\sqrt{-1}\pa\ol\pa\omega_g^{\dim_{\mathbb{C}}M-1}\geq0$. If $(E,\ol\pa_E,K)$ is a stable Hermitian holomorphic vector bundle on $M\setminus\Sigma$ with $|\Lambda F_{K}^\perp|\in L^\infty(M\setminus\Sigma)$, there exists a $\HE$ metric compatible with $K$.
\end{theorem}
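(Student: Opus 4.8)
The plan is to run a continuity/heat-flow argument on the noncompact manifold $M \setminus \Sigma$, obtaining the $\HE$ metric as a limit of solutions of the Hermitian-Einstein equation on an exhaustion by relatively compact domains, and using stability to rule out the escape of the Donaldson functional to $-\infty$. First I would fix the background metric $K$ and write a candidate solution as $H = K h$ with $h$ a positive self-adjoint endomorphism; the $\HE$ equation becomes a nonlinear elliptic equation for $h$. Since $M \setminus \Sigma$ is noncompact I would choose a smooth exhaustion $\Omega_1 \Subset \Omega_2 \Subset \cdots$ with $\bigcup_j \Omega_j = M \setminus \Sigma$ and solve the Dirichlet problem on each $\Omega_j$ with boundary value $h = \id_E$ on $\partial\Omega_j$; existence on each piece follows from the compact-with-boundary theory (as in Donaldson's original treatment and its extensions by Simpson). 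Call the solutions $h_j$.

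The heart of the argument is the a priori estimate. The hypothesis $|\Lambda F_K^\perp| \in L^\infty(M\setminus\Sigma)$ together with $\sqrt{-1}\,\pa\ol\pa\,\omega_g^{\dim_{\mathbb C}M-1}\ge 0$ (so that $\omega_g$ is "almost Gauduchon" and the integration-by-parts identities governing the Donaldson functional hold) gives a uniform bound on the Donaldson functional $M_K(h_j)$ along the sequence, provided we have a uniform bound on $\sup_{\Omega_j}\log\tr h_j$. To get the latter, I would invoke the key dichotomy: either $\sup_j \sup_{\Omega_j}|\log h_j|$ is bounded, in which case standard interior elliptic estimates (Evans-Krylov / Schauder bootstrapping, local in $M\setminus\Sigma$) produce $C^\infty_{\loc}$ convergence of a subsequence $h_j \to h_\infty$ and $h_\infty$ solves the $\HE$ equation; or the sup blows up, in which case — after rescaling and extracting a weak limit in $L^2_{1,\loc}$ — one produces a nontrivial $K$-self-adjoint endomorphism whose spectral decomposition yields a coherent (saturated) subsheaf of $(E,\ol\pa_E)$ violating the slope inequality. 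The Chern-Weil / Uhlenbeck-Yau type formula relating the degree of this subsheaf to the limiting curvature integral is where the equivalence of $g$ to a metric extending over $M$ (controlling volumes and ensuring finiteness of the relevant integrals) and the sign condition on $\pa\ol\pa\omega_g^{n-1}$ are essential; they guarantee that the destabilizing subsheaf actually has a well-defined slope in the sense used to define stability, and that the integral estimate goes through despite the singular set $\Sigma$.

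The main obstacle I anticipate is precisely this blow-up analysis across the closed set $\Sigma$: one must show that the weak limit of the rescaled endomorphisms, a priori only defined on $M\setminus\Sigma$, extends to (or directly defines) a genuine coherent subsheaf on $M\setminus\Sigma$ with controlled degree, and that no mass of the Donaldson functional is lost "at $\Sigma$." Because $\Sigma$ is allowed to be completely arbitrary (not a divisor, not even of measure zero a priori — though equivalence of $g$ to a smooth metric on $M$ forces $\Sigma$ to be $g$-null in the relevant sense), the usual removable-singularity or Hartogs-type extension arguments are unavailable, and one instead works intrinsically on $M\setminus\Sigma$ with the stability notion there. I would handle this by adapting Uhlenbeck-Yau's weak subbundle construction and Simpson's noncompact refinements: the projection $\pi = \lim \pi_j$ onto the destabilizing subspace lies in $L^2_{1,\loc}(M\setminus\Sigma)$ with $(\id-\pi)\ol\pa_E\pi = 0$, hence by the regularity theorem for such weak subbundles defines a coherent subsheaf, and its slope is computed by the Chern-Weil integral which is finite and negative by the accumulated a priori bounds. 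Contradiction with stability then forces the bounded alternative, completing the proof. A secondary technical point is the choice of exhaustion and the verification that the boundary term in the integration by parts has a favorable sign (or vanishes in the limit), which uses $\sqrt{-1}\,\pa\ol\pa\,\omega_g^{n-1}\ge 0$ together with a careful choice of cutoff functions supported near $\partial\Omega_j$.
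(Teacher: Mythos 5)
Your proposal follows essentially the same route as the paper: solve Dirichlet problems for the $\HE$ equation on an exhaustion of $M\setminus\Sigma$ with boundary value $K$, and obtain the zeroth-order estimate by the dichotomy in which an unbounded sequence $\|\log h_j\|$ forces, after normalization and passage to a weak $L^2_{1,\loc}$ limit with constant eigenvalues, a destabilizing saturated subsheaf via Uhlenbeck--Yau regularity and Simpson's slope computation, contradicting stability. The only cosmetic difference is that the paper's written-out version (Remark 3.5, riding on the proof of Theorem 1.2) inserts an $\epsilon\log(K^{-1}H)$ perturbation and takes a double limit $j\to\infty$ then $\epsilon\to 0$, while the paper itself notes that the direct unperturbed exhaustion you describe works equally well.
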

The general strategy in proving Theorem \ref{thm1} is not so involved. In fact, one may first solve Dirichlet problems of $\HE$ equations on an exhaustion series of compact submanifolds with boundaries. Secondly, once the zeroth estimate of a sequence of solutions to Dirichlet problems is established, the evolved metrics will converge to the desired $\HE$ metric. Alternatively, the strategy in proving Theorem \ref{thm2} below also applies to Theorem \ref{thm1}, see Remark \ref{stableremark}.
\par We are further interested in the semi-stable context and prove
\begin{theorem}\label{thm2}
	Let $(M,J)$ be a compact complex manifold and $\Sigma\subset M$ be an arbitrary closed subset. Suppose $g$ is a Hermitian metric on $M\setminus\Sigma$ equivalent to a Riemannian metric on $M$ and $\sqrt{-1}\pa\ol\pa\omega_g^{\dim_{\mathbb{C}}M-1}\geq0$. If $(E,\ol\pa_E,K)$ is a semi-stable Hermitian holomorphic vector bundle on $M\setminus\Sigma$ with $|\Lambda F_{K}^\perp|\in L^\infty(M\setminus\Sigma)$, there exists an $\AHE$ structure compatible with $K$.
\end{theorem}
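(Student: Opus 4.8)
The plan is to adapt the strategy behind Theorem~\ref{thm1} to a \emph{perturbed} Hermitian--Einstein equation and then to let the perturbation parameter tend to zero by an Uhlenbeck--Yau/Simpson-type argument. Write $n=\dim_{\mathbb{C}}M$ and $C_0:=\sup_{M\setminus\Sigma}|\Lambda F_K^\perp|_K<\infty$. For a Hermitian metric $H$ on $(E,\ol\pa_E)$ with $\det H=\det K$ put $s_H:=\log(K^{-1}H)$, which is then trace-free and $K$-self-adjoint, and for $\epsilon\in(0,1]$ consider the equation
\[
\Lambda F_H^\perp+\epsilon\, s_H=0.\qquad\qquad(\ast_\epsilon)
\]
Note that, unlike the genuine $\HE$ equation, $(\ast_\epsilon)$ is the Euler--Lagrange equation of the relative Donaldson functional augmented by $\tfrac{\epsilon}{2}\int|s_H|_K^2\,\tfrac{\omega_g^n}{n!}$, which is coercive; hence $(\ast_\epsilon)$ can be solved \emph{without any stability assumption}.

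First I would solve $(\ast_\epsilon)$ on $M\setminus\Sigma$ for each fixed $\epsilon$. Exhaust $M\setminus\Sigma$ by smoothly bounded compact domains $M_1\Subset M_2\Subset\cdots$, and on each $M_i$ solve the Dirichlet problem for $(\ast_\epsilon)$ with boundary value $K$ among metrics of determinant $\det K$; by the coercivity just noted the minimizer $H_{\epsilon,i}$ exists and is smooth up to $\pa M_i$ (equivalently one may run the associated parabolic flow from $K$ with fixed boundary data). As in the proof of Theorem~\ref{thm1}, and using that $g$ is equivalent to a metric on the compact manifold $M$ (so that $\vol(M\setminus\Sigma)<\infty$), that $|\Lambda F_K^\perp|_K\leq C_0$, and that $\sqrt{-1}\pa\ol\pa\omega_g^{n-1}\geq0$ controls the non-K\"{a}hler correction terms in the relevant Bochner/energy inequalities, one obtains estimates on $H_{\epsilon,i}$ uniform in $i$; interior elliptic estimates then give local $C^\infty$ bounds, and a diagonal subsequence converges to a smooth metric $H_\epsilon$ on $M\setminus\Sigma$ with $\det H_\epsilon=\det K$, solving $(\ast_\epsilon)$, with $|\Lambda F_{H_\epsilon}^\perp|_{H_\epsilon}\in L^\infty(M\setminus\Sigma)$. (This perturbed construction also applies in the stable setting of Theorem~\ref{thm1}, giving the alternative approach mentioned in Remark~\ref{stableremark}: there stability is used in the final step to bound $\sup|s_{H_\epsilon}|$ uniformly in $\epsilon$.)

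It remains to show that $\{H_\epsilon\}_{\epsilon\in(0,1]}$ is an $\AHE$ structure, that is, that
\[
\delta(\epsilon):=\sup_{M\setminus\Sigma}|\Lambda F_{H_\epsilon}^\perp|_{H_\epsilon}=\epsilon\,\sup_{M\setminus\Sigma}|s_{H_\epsilon}|_K\xrightarrow{\ \epsilon\to0\ }0.
\]
This is the heart of the matter and the step I expect to be the main obstacle. Arguing by contradiction, suppose there are $\epsilon_k\downarrow0$ with $\epsilon_k m_k\geq c>0$, where $m_k:=\sup_{M\setminus\Sigma}|s_{H_{\epsilon_k}}|_K$ (necessarily $m_k\to\infty$). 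Normalize $u_k:=s_{H_{\epsilon_k}}/m_k$, so $\|u_k\|_{C^0}\leq1$. Testing $(\ast_{\epsilon_k})$ against suitable functions of $u_k$ and invoking a Simpson-type integral identity --- once more absorbing the non-K\"{a}hler corrections via $\sqrt{-1}\pa\ol\pa\omega_g^{n-1}\geq0$ and using $\vol(M\setminus\Sigma)<\infty$ --- one derives a uniform $L^2_1$ bound for $u_k$ on $M\setminus\Sigma$, and extracts a weak limit $u_\infty$ that is $K$-self-adjoint, has constant eigenvalues, is nonzero (a separate lower bound prevents $u_k\to0$ in the relevant norm), and whose spectral projections $\pi$ are weakly holomorphic $L^2_1$ subbundles. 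By the Uhlenbeck--Yau regularity statement available in this $M\setminus\Sigma$ setting, each such $\pi$ is represented by a coherent subsheaf $\mathcal{F}\subsetneq E$ on $M\setminus\Sigma$ with well-defined $g$-degree; and the term $\epsilon_k m_k\geq c$, which survives in the limit, upgrades the usual Chern--Weil estimate to the \emph{strict} slope inequality $\deg_g(\mathcal{F})/\rank(\mathcal{F})>\deg_g(E)/\rank(E)$ for at least one such $\mathcal{F}$, contradicting the semi-stability of $(E,\ol\pa_E,K)$. Hence $\delta(\epsilon)\to0$.

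Consequently $\{H_\epsilon\}_{\epsilon\in(0,1]}$ (equivalently $\{H_{1/k}\}_{k\geq1}$) is an $\AHE$ structure compatible with $K$, which proves the theorem. To summarize the difficulty: the fixed-$\epsilon$ solvability of $(\ast_\epsilon)$ and the uniform-in-$i$ estimates are a routine modification of the analysis behind Theorem~\ref{thm1}; the delicate points are the uniform $L^2_1$ bound for $u_k$ on the noncompact manifold $M\setminus\Sigma$ in the presence of the $\sqrt{-1}\pa\ol\pa\omega_g^{n-1}$-correction, and extracting from the surviving term $\epsilon_k m_k\geq c$ the \emph{strict} destabilizing inequality needed to contradict semi-stability (mere semi-stability would not be contradicted by a non-strict inequality).
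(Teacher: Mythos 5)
Your proposal follows essentially the same route as the paper: solve the perturbed equation $\sqrt{-1}\Lambda F_{H_\epsilon}^\perp+\epsilon\log(K^{-1}H_\epsilon)=0$ by exhausting $M\setminus\Sigma$ with Dirichlet problems, then rule out $\epsilon\sup|\log(K^{-1}H_\epsilon)|\not\to0$ by a Simpson/Uhlenbeck--Yau blow-up argument in which the surviving term $\epsilon_k m_k\geq c$ forces a \emph{strictly} destabilizing subsheaf, contradicting semi-stability. The two technical points you flag as delicate are precisely where the paper works: it chooses the flow so that $\det H=\det K$ is preserved without solving a Poisson equation on $M\setminus\Sigma$, and it obtains all integral identities on the compact pieces $M_j$ (where the Dirichlet boundary data kill the boundary terms) before passing to a double limit, thereby avoiding any Stokes lemma on $M\setminus\Sigma$ for an arbitrary closed $\Sigma$.
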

In Theorems \ref{thm1}, \ref{thm2}, we require neither K\"{a}hler condition nor Gauduchon condition on $\omega_g$. We should mention that if $\Sigma\subset M$ is Zariski closed and $g$ is the restriction of a K\"{a}hler metric on $M$ (we call it the model of Simpson since it satisfies the key assumptions in \cite{Si1988}),  Theorem \ref{thm1} was proved in \cite{Si1988} and Theorem \ref{thm2} was obtained in \cite{ZZZ2021}. Actually, the authors in \cite{ZZZ2021} studied the case of noncompact Gauduchon manifolds satisfying Simpson's type assumptions (see \cite[Section 2]{Si1988}) and $|d\omega^{\dim_{\mathbb{C}}N-1}_g|\in L^2(N)$. Different from the standard method of exhaustion in stable case, the strategy we adopt in semi-stable case is to extend the method of continuity by Uhlenbeck-Yau \cite{UY1986} to noncompact case, meaning to solve a family of perturbed $\HE$ equations $(\ref{epsilonequation})$ on the noncompact space $M\setminus\Sigma$. The solvability of them was established on compact Gauduchon manifolds in \cite{LY1987,UY1986}, on noncompact Gauduchon manifolds satisfying Simpson's type assumptions and  $|d\omega^{\dim_{\mathbb{C}}N-1}_g|\in L^2(N)$ in \cite{ZZZ2021} with the help of solving the following Poisson equation:
\begin{equation}\begin{split}
		\sqrt{-1}\Lambda\pa\ol\pa f=-\frac{1}{\rank(E)}\tr\left(\sqrt{-1}\Lambda F_K-\frac{\int_N\sqrt{-1}\Lambda F_K\dvol_g}{\rank(E)\vol(N)}\id_E\right).
\end{split}\end{equation}
However, the solvability of such a equation is not clearly since there is no enough information on both $\Sigma$ and $\omega_g$. To get around this obstacle, the observation is a carefully new choice of a perturbed heat flow $(\ref{flow})$ and the advantage is that, this flow preserves the property $\det H=\det K$. Finally, the existence of an $\AHE$ structure will be proved by examining the limit of $\{H_\epsilon\}_{\epsilon>0}$ when $\epsilon$ goes to zero. In this step, one should note that again the generality of $\Sigma$ and $\omega_g$ may not imply suitable Stokes lemma on $M\setminus\Sigma$ which is a pivotal point in establishing global integral estimates \cite{Si1988,ZZZ2021}, our argument removes this technical constraint.
\par Below we consider the uniqueness problem of $\HE$ metrics on a stable holomorphic vector bundle. In compact case, it is not hard to know that $\HE$ metrics are unique up to constant multiples, see \cite{Ko1987,LT1995} for example. However, the noncompact issue becomes more subtle. Even for the case that $\Sigma\subset M$ is a Zariski closed subset, the uniqueness of $\HE$ metrics is unknown until the recent work \cite{Mo2020} of Mochizuki, while the existence of $\HE$ metrics was established by Simpson \cite{Si1988} more than thirty years ago. Explicitly, we have
\begin{theorem}[Mochizuki, Remark 2.13 in \cite{Mo2020}]\label{thm3}
Let $(M,J,g)$ be a compact K\"{a}hler manifold and $\Sigma\subset M$ be a Zariski closed subset. Suppose $(E,\ol\pa_E,K)$ is a stable Hermitian holomorphic vector bundle on $M\setminus\Sigma$ with $|\Lambda F_K|\in L^\infty(M\setminus\Sigma)$ and $H_1, H_2$ are two $\HE$ metrics compatible with $K$, then we have $H_1=H_2$.
\end{theorem}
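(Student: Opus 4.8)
The plan is to show that the endomorphism $h:=H_1^{-1}H_2\in\End(E)$ — which is positive-definite and self-adjoint with respect to both $H_1$ and $H_2$ — equals $\id_E$. I would first reduce, via a Bochner-type identity together with a removable-singularity argument across $\Sigma$, to the assertion that $h$ is a \emph{bounded holomorphic} endomorphism of $(E,\ol\pa_E)$ on $M\setminus\Sigma$, and then invoke stability to conclude $h=\id_E$. \emph{Step 1 (comparing the two equations).} Compatibility of $H_1,H_2$ with $K$, together with $|\Lambda F_K|\in L^\infty(M\setminus\Sigma)$, guarantees that $H_1,H_2$ are smooth on $M\setminus\Sigma$, that $h$ and $h^{-1}$ are bounded sections of $\End(E)$, and that $\det h\equiv1$. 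From the standard relation $\sqrt{-1}\Lambda F_{H_2}=\sqrt{-1}\Lambda F_{H_1}+\sqrt{-1}\Lambda\ol\pa_E(h^{-1}\pa_{H_1}h)$, taking the trace and using $\det h\equiv1$ (so $\tr\ol\pa_E(h^{-1}\pa_{H_1}h)=\ol\pa\pa\log\det(h)=0$) gives $\tr(\Lambda F_{H_1})=\tr(\Lambda F_{H_2})$; since both metrics solve $(\ref{HE})$, the mean curvatures $\sqrt{-1}\Lambda F_{H_i}$ are scalar endomorphisms, hence equal: $\sqrt{-1}\Lambda F_{H_1}=\sqrt{-1}\Lambda F_{H_2}$ on $M\setminus\Sigma$.

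\emph{Step 2 (a subharmonicity inequality).} The Bochner-type identity underlying uniqueness in the compact case (cf.\ \cite{Ko1987,LT1995}) gives, on $M\setminus\Sigma$,
\begin{equation*}
\sqrt{-1}\,\Lambda\pa\ol\pa\,\tr(h)\;=\;\sqrt{-1}\,\tr\!\bigl((\Lambda F_{H_1}-\Lambda F_{H_2})\,h\bigr)\;+\;\bigl|\ol\pa_E(h)\bigr|^2_{h,g},
\end{equation*}
where $\bigl|\ol\pa_E(h)\bigr|^2_{h,g}\geq0$ denotes the appropriate $(h,g)$-dependent norm and vanishes precisely when $\ol\pa_E h=0$. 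By Step 1 the first term on the right vanishes, so $\tr(h)$ is subharmonic for the Laplacian of the (smooth, compact) K\"ahler metric $g$; it is moreover bounded, above and below, since $h$ and $h^{-1}$ are bounded.

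\emph{Step 3 (removing $\Sigma$, the crux).} Here the hypothesis that $\Sigma$ is \emph{Zariski} closed is used. A Zariski closed subset is a complex-analytic subset of $M$ of complex codimension $\geq1$, so locally it is contained in $\{\,\log(|f_1|^2+\cdots+|f_k|^2)=-\infty\,\}$ for holomorphic functions $f_j$; hence $\Sigma$ is a closed polar set for the Laplacian of $g$. By the classical removable-singularity theorem for bounded subharmonic functions across closed polar sets, $\tr(h)$ extends to a subharmonic function on $M$; since the integral of $\sqrt{-1}\Lambda\pa\ol\pa(\,\cdot\,)$ against the volume form vanishes on the compact K\"ahler manifold $M$, this extension must be harmonic, hence constant. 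Therefore $\tr(h)$ is constant on $M\setminus\Sigma$, and the equality case of Step 2 yields $\ol\pa_E h=0$, i.e.\ $h$ is a holomorphic endomorphism of $(E,\ol\pa_E)$. This is the only genuinely non-compact ingredient: for an arbitrary closed $\Sigma$ it would fail, but Zariski-closedness makes $\Sigma$ negligible for the Laplacian, which is exactly what lets a bounded subharmonic function extend across it and then be constant by compactness of $M$.

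\emph{Step 4 (stability and conclusion).} Being holomorphic and bounded, each $\tr(h^j)$ for $1\leq j\leq\rank(E)$ is a bounded holomorphic function on $M\setminus\Sigma$; by Riemann's extension theorem across the analytic set $\Sigma$ and the maximum principle on the compact connected manifold $M$, it is constant. Hence the characteristic polynomial of $h$ has constant coefficients, and since $h$ is self-adjoint positive-definite its eigenvalues are constants $\lambda_1,\dots,\lambda_m>0$ of constant multiplicity; thus each $h-\lambda_i\id_E$ is a holomorphic bundle endomorphism of constant rank, so $\ker(h-\lambda_i\id_E)$ is a holomorphic subbundle and $E=\bigoplus_{i=1}^m\ker(h-\lambda_i\id_E)$ holomorphically on $M\setminus\Sigma$. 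Stability of $(E,\ol\pa_E,K)$ forces every proper subsheaf to have slope $<\mu(E)$, which a nontrivial such decomposition would violate (as $\mu(E)$ is a convex combination of the slopes of the summands), so $m=1$ and $h=\lambda\,\id_E$; finally $\lambda^{\rank(E)}=\det h=1$ forces $\lambda=1$, that is $H_1=H_2$. (Alternatively one could carry out Step 3 by global integral estimates on $M\setminus\Sigma$ with a cutoff near $\Sigma$, as in \cite{Si1988,Mo2020}; the potential-theoretic route above avoids Stokes-type lemmas on $M\setminus\Sigma$ altogether.)
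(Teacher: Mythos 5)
Your Steps 1--3 are essentially correct and parallel the first half of the paper's argument: one shows $\sqrt{-1}\Lambda\pa\ol\pa\tr(H_1^{-1}H_2)\geq0$ with the curvature term cancelling because both metrics are $\HE$ with equal determinant, deduces that the bounded function $\tr(H_1^{-1}H_2)$ is constant, and concludes that $h=H_1^{-1}H_2$ is holomorphic with constant eigenvalues, so $E$ splits holomorphically and $H_1$-, $H_2$-orthogonally as $\oplus_{i=1}^m E_i$. (The paper obtains constancy by citing Simpson's Proposition \ref{assumptions}(3) rather than by your removable-singularity argument across the polar set $\Sigma$; the two routes are interchangeable for this model.)

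The gap is in Step 4, and it is precisely the point Mochizuki flagged and that the paper spends Theorem \ref{thm4} and Lemma \ref{keylemma2} resolving. You apply stability of $(E,\ol\pa_E,K)$ and assert that $\deg(E,K)/\rank(E)$ is a convex combination of the slopes $\deg(E_i,K)/\rank(E_i)$. On a noncompact base the degree is the analytic Chern--Weil quantity $\deg(E_i,K)=\int\bigl(\tr(\sqrt{-1}\pi_{K,i}\circ\Lambda F_K)-|\ol\pa_E\pi_{K,i}|^2\bigr)\dvol_g$, computed with the $K$-orthogonal projection $\pi_{K,i}$ onto $E_i$; it depends on the metric and is not additive over a direct sum that is merely holomorphic. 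Your decomposition is $H_1$- and $H_2$-orthogonal but there is no reason for it to be $K$-orthogonal, so $\sum_i\pi_{K,i}\neq\id_E$, the second-fundamental-form terms $|\ol\pa_E\pi_{K,i}|^2$ need not vanish, and $\sum_i\deg(E_i,K)\neq\deg(E,K)$ in general. Hence the convex-combination identity fails for $K$-slopes and stability of $(E,\ol\pa_E,K)$ alone does not produce the contradiction. The identity does hold for $H_1$-slopes, since $\ol\pa_E\pi_{H_1,i}=0$ for a holomorphic $H_1$-orthogonal summand, but then one needs stability of $(E,\ol\pa_E,H_1)$, which is exactly what is not known a priori. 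The paper's fix is Theorem \ref{thm4}: via the $L^2$-Stokes lemma (Proposition \ref{L2Stokes}) and Lemma \ref{keylemma2} one proves $\deg(\mathcal{S},H_1)=\deg(\mathcal{S},K)$ for every saturated subsheaf whose second fundamental form is $L^2$ (and $\deg(\mathcal{S},H_1)=-\infty$ otherwise), so that $(E,\ol\pa_E,H_1)$ is stable, and the slope comparison in your Step 4 is then carried out with respect to $H_1$. Some such degree-comparison argument must be inserted before your Step 4 can close.
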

In the same paper, Mochizuki also pointed out (see the paragraph above \cite[Remark 2.13]{Mo2020}) that the key point is, it is not clear whether the stability of $(E,\ol\pa_E,K)$ implies the stability of $(E,\ol\pa_E,H)$ for a $\HE$ metric $H$ and we need additional work. This nuisance does not appear in the case when the base space is compact since the stability condition on compact manifolds does not depend on the choice of metrics on a holomorphic vector bundle. In this paper, we answer Mochizuki's question affirmatively for the model of Simpson as follows.
\begin{theorem}\label{thm4}
Let $(M,J,g)$ be a compact K\"{a}hler manifold and $\Sigma\subset M$ be a Zariski closed subset. Suppose $(E,\ol\pa_E,K)$ is a stable Hermitian holomorphic vector bundle on $M\setminus\Sigma$ with $|\Lambda F_K|\in L^\infty(M\setminus\Sigma)$ and $H$ is a $\HE$ metric compatible with $K$, then $(E,\ol\pa_E,H)$ is stable.
\end{theorem}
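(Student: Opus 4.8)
The plan is to compare the stability of $(E,\ol\pa_E,H)$ with that of $(E,\ol\pa_E,K)$ by analysing, for an arbitrary coherent saturated subsheaf $\mathcal{F}\subset E$ defined on $M\setminus\Sigma$, the difference of its $K$-degree and its $H$-degree. Recall that a saturated subsheaf is represented away from its (analytic) singular locus by a subbundle, hence by the orthogonal projection onto it; call $\pi_K$ and $\pi_H$ the two projections (each a weakly-$L^2_1$ section of the relevant endomorphism bundle) associated with $\mathcal{F}$ for the metrics $K$ and $H$. The Chern--Weil-type formula for the degree in the noncompact setting (as developed by Simpson \cite{Si1988} and used by Mochizuki \cite{Mo2020}) expresses $\deg_K(\mathcal{F})$ and $\deg_H(\mathcal{F})$ as integrals over $M\setminus\Sigma$ of expressions involving $\tr(\pi\Lambda F)$ and $|\ol\pa\pi|^2$. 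Since $H$ is a $\HE$ metric compatible with $K$, write $H=K h$ with $h$ a positive self-adjoint endomorphism; the key structural input, available from the analysis behind Theorems \ref{thm1}--\ref{thm3}, is that $h$ and $h^{-1}$ are bounded and that the relevant curvature quantities are controlled, so that all the integrals in sight converge absolutely.

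The first step is to set up the degree formula carefully for $K$ and for $H$ on the Zariski open $M\setminus\Sigma$, being explicit about which integrability is needed and citing the version in \cite{Si1988,Mo2020}. The second step is to show that a $K$-destabilizing (or merely $K$-non-strictly-stabilizing) subsheaf would be $H$-destabilizing as well, and conversely; equivalently, one shows that the \emph{set} of saturated subsheaves realizing the maximal normalized degree is the same for $K$ and for $H$. The cleanest route is: (i) since $H$ and $K$ are both $\HE$ with the same slope (same determinant line bundle data on $M\setminus\Sigma$, and $|\Lambda F|\in L^\infty$ for both), the Einstein constant is the same; (ii) for any saturated $\mathcal{F}$, expand $\deg_H(\mathcal{F})-\deg_K(\mathcal{F})$ and identify it, via a Bott--Chern / secondary-class computation with the endomorphism $h$, as a sum of a boundary term that vanishes by the Zariski-closedness of $\Sigma$ plus a manifestly nonnegative term; (iii) conclude that $\deg_H(\mathcal{F})\le \deg_K(\mathcal{F})$ with equality forcing $\ol\pa_E$-parallelism of $h$ along $\mathcal{F}$, which by indecomposability/stability of $(E,\ol\pa_E,K)$ forces $h$ to be a constant multiple of the identity on $\mathcal{F}$. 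Because $(E,\ol\pa_E,K)$ is stable we get $\deg_K(\mathcal{F})<\mu(E)\cdot\rank(\mathcal{F})$ for every proper $\mathcal{F}$, and the inequality then gives the same strict inequality for $H$.

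An alternative and perhaps more robust second step avoids secondary classes: use the $\HE$ property of $H$ directly. Pick a proper saturated $\mathcal{F}$ with projection $\pi_H$; the Chern--Weil formula gives
\begin{equation}\begin{split}
\deg_H(\mathcal{F})=\int_{M\setminus\Sigma}\tr(\pi_H\sqrt{-1}\Lambda F_H)\,\dvol_g-\int_{M\setminus\Sigma}|\ol\pa\pi_H|_H^2\,\dvol_g,
\end{split}\end{equation}
and since $\sqrt{-1}\Lambda F_H=c\,\id_E+\sqrt{-1}\Lambda F_H^\perp$ with the trace-free part controlled — in fact for a genuine $\HE$ metric $\Lambda F_H^\perp=0$ — the first integral is exactly $c\,\rank(\mathcal{F})\,\vol(M)$, i.e. $\mu(E)\cdot\rank(\mathcal{F})$. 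Hence $\deg_H(\mathcal{F})\le \mu(E)\rank(\mathcal{F})$ automatically, with equality iff $\ol\pa\pi_H=0$, i.e. $\mathcal{F}$ is a holomorphic direct summand preserved by $H$. Such a summand would be a holomorphic subbundle of $(E,\ol\pa_E)$ on $M\setminus\Sigma$ that is also $K$-orthogonally split (after comparing $\pi_H$ and $\pi_K$ using boundedness of $h$), contradicting the stability — in particular the indecomposability — of $(E,\ol\pa_E,K)$. Therefore every proper saturated subsheaf satisfies the strict slope inequality for $H$, which is precisely the stability of $(E,\ol\pa_E,H)$.

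The main obstacle is step one done honestly: justifying that the Chern--Weil integral formula for the degree of a saturated subsheaf holds for the metric $H$ (not just $K$) on $M\setminus\Sigma$, i.e. that the relevant integrals converge and that the boundary contributions at $\Sigma$ vanish. This is exactly the subtlety Mochizuki flags — stability is a priori metric-dependent in the noncompact world — and it is where one must use that $\Sigma$ is Zariski closed (so that $\Sigma$ has the right capacity/measure-zero and codimension properties), that $g$ is a genuine K\"ahler metric on all of $M$, and that $|\Lambda F_K|\in L^\infty$ together with the $\HE$ equation gives $|\Lambda F_H|\in L^\infty$ and good integrability of $h$. Once this analytic input is in place — and it is essentially the same input underlying Theorem \ref{thm3} and the existence theory of \cite{Si1988} — the slope comparison above is formal. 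I expect the write-up to spend most of its length on this convergence-of-integrals and vanishing-of-boundary-terms bookkeeping, and only a short paragraph on the slope inequality itself.
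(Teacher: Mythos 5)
Your first route is essentially the paper's argument: the proof of Theorem \ref{thm4} reduces everything to showing that, for a saturated subsheaf $\mathcal{S}$, the two degrees coincide, $\deg(\mathcal{S},H)=\deg(\mathcal{S},K)$, whenever they are finite. The paper isolates this as Lemma \ref{keylemma2}: the difference $\tr\sqrt{-1}\Lambda F_{H|_S}-\tr\sqrt{-1}\Lambda F_{K|_S}$ is $\tr\sqrt{-1}\Lambda\ol\pa_S\bigl((H|_S^{-1}K|_S)^{-1}\pa(H|_S^{-1}K|_S)\bigr)$, an exact term whose integral vanishes by Simpson's $L^2$-Stokes lemma (Proposition \ref{L2Stokes}); the needed $L^2$ bound comes from Lemma \ref{keylemma1}, which expresses $\ol\pa_S(H|_S^{-1}K|_S)$ through $\ol\pa_E(H^{-1}K)$ (which is $L^2$ by compatibility) and $\ol\pa_E\pi_{H}$ (which is $L^2$ by the case assumption). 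Two corrections to your set-up. First, your ``key structural input'' that \emph{all} integrals converge absolutely is not available and not needed: the term $\int|\ol\pa_E\pi_H|_H^2$ may diverge, and the paper handles this by a case distinction --- if $|\ol\pa_E\pi_H|_H\notin L^2$ then $\deg(\mathcal{S},H)=-\infty$ and the strict slope inequality is automatic; the comparison lemma is invoked only in the $L^2$ case, where it also transfers the $L^2$ property from $\pi_H$ to $\pi_K$ so that $\deg(\mathcal{S},K)$ is an honest absolutely convergent integral. Second, the difference of degrees is a pure boundary term equal to zero, not ``a boundary term plus a manifestly nonnegative term''; you get equality, not merely $\deg_H\leq\deg_K$ (though the inequality would also suffice here).

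Your ``alternative'' second step has a genuine gap in the equality case. From $\Lambda F_H^\perp=0$ you correctly get $\deg(\mathcal{S},H)\leq\mu_H(E)\rank(\mathcal{S})$ with equality iff $\ol\pa_E\pi_H=0$, but ruling out equality is where the work is. A holomorphic $H$-orthogonal splitting $E=\mathcal{S}\oplus\mathcal{S}^{\perp_H}$ does \emph{not} contradict the stability of $(E,\ol\pa_E,K)$ until you show that one summand has $K$-slope at least $\mu_K(E)$; the natural way to do this is $\mu_H(E_i)\geq\mu_H(E)$ for some $i$ (from $H$-orthogonality) combined with $\deg(E_i,H)=\deg(E_i,K)$ --- which is again exactly the degree-comparison lemma you were trying to avoid. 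Moreover the parenthetical claim that the splitting is ``also $K$-orthogonally split after comparing $\pi_H$ and $\pi_K$ using boundedness of $h$'' is false: boundedness of $h$ and $h^{-1}$ does not make an $H$-orthogonal decomposition $K$-orthogonal. Note also that indecomposability is not a hypothesis of Theorem \ref{thm4}, and deducing it from $K$-stability in this noncompact setting is itself a degree computation of the same kind. So the alternative route is not a shortcut; it silently re-imports Lemma \ref{keylemma2}.
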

As a direct consequence of Theorem \ref{thm4}, we shall present an alternative approach to the uniqueness issue of $\HE$ metrics on noncompact manifolds in Section 4.
\par Next we focus on the necessary condition for the existence of $\HE$ metrics on the model of Simpson. It is easy to see that the existence of a $\HE$ metric $H$ on an indecomposable holomorphic vector bundle $(E,\ol\pa_E)$ must imply the stability of $(E,\ol\pa_E,H)$. However, similar as Mochizuki's question mentioned above, the stability with respect to the background data $(E,\ol\pa_E,K)$ is generally unknown, hence a Kobayashi-Hitchin bijective correspondence between stability and existence of $\HE$ metrics remains open. In present work, we are able to prove stability (resp. semi-stability) of $(E,\ol\pa_E,K)$ from  stability (resp. semi-stability) of $(E,\ol\pa_E,H)$, therefore establishing the opposite direction of Theorems \ref{thm1}, \ref{thm2} as follows.
\begin{theorem}\label{thm5}
Let $(M,J,g)$ be a compact K\"{a}hler manifold and $\Sigma\subset M$ be a Zariski closed subset. Suppose $(E,\ol\pa_E,K)$ is an indecomposable Hermitian holomorphic vector bundle on $M\setminus\Sigma$ with $|\Lambda F_K|\in L^\infty(M\setminus\Sigma)$, then we have
\begin{enumerate}
	\item If there exists a $\HE$ metric compatible with $K$, then $(E,\ol\pa_E,K)$ is stable.
	\item If there exists an $\AHE$ structure compatible with $K$, then $(E,\ol\pa_E,K)$ is semi-stable.
	\end{enumerate}
\end{theorem}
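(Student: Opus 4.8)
The plan is to combine the classical ``same-metric'' half of the Kobayashi--Hitchin inequality with a metric-independence statement for the analytic degree on the model of Simpson; the second ingredient is in the same circle of ideas as the proof of Theorem \ref{thm4}.

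\emph{Step 1: the inequality for $H$ (resp. for $H_\epsilon$) itself.} For a saturated coherent subsheaf $S\subset E$ with $0<\rank(S)<\rank(E)$ and any Hermitian metric $h$, let $\pi_h$ be the $h$-orthogonal weakly holomorphic projection onto $S$; the Gauss--Codazzi identity gives, up to a fixed positive constant depending only on $n=\dim_{\mathbb C}M$,
\[
\deg_h(S)=\int_{M\setminus\Sigma}\tr\bigl(\pi_h\,\sqrt{-1}\Lambda F_h\bigr)\,\dvol_g-\int_{M\setminus\Sigma}|\ol\pa_E\pi_h|_h^2\,\dvol_g .
\]
If $h=H$ is $\HE$, then $\sqrt{-1}\Lambda F_H=\rank(E)^{-1}(\tr\sqrt{-1}\Lambda F_H)\id_E$ and $\tr\pi_H=\rank(S)$ a.e., so the first term equals $\rank(S)\,\mu_H(E)$; hence $\mu_H(S)\le\mu_H(E)$, and equality forces $\ol\pa_E\pi_H=0$, whence $\pi_H$ is (by elliptic regularity) a holomorphic $H$-selfadjoint idempotent and $E|_{M\setminus\Sigma}=\im\pi_H\oplus\ker\pi_H$ splits holomorphically, contradicting indecomposability. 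So $(E,\ol\pa_E,H)$ is stable. If instead $h=H_\epsilon$ is a member of an $\AHE$ structure, writing $\sqrt{-1}\Lambda F_{H_\epsilon}=\sqrt{-1}\Lambda F^\perp_{H_\epsilon}+\rank(E)^{-1}(\tr\sqrt{-1}\Lambda F_{H_\epsilon})\id_E$ and using $\vol_g(M\setminus\Sigma)<\infty$ (as $\Sigma$ is a proper analytic subset of the compact $M$) yields, for a fixed $C=C(n,\rank E)$,
\[
\mu_{H_\epsilon}(S)\le\mu_{H_\epsilon}(E)+\delta_\epsilon,\qquad \delta_\epsilon=C\,\vol_g(M\setminus\Sigma)\sup_{M\setminus\Sigma}|\Lambda F^\perp_{H_\epsilon}|_{H_\epsilon}\xrightarrow{\epsilon\to0}0 .
\]

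\emph{Step 2: transfer to $K$.} It remains to prove that for every $K$-admissible proper saturated $S$ one has $\deg_K(S)=\deg_H(S)$ and $\deg_K(E)=\deg_H(E)$ (resp. with $H_\epsilon$), and that such an $S$ is also $H$-admissible (resp. $H_\epsilon$-admissible) so that Step 1 applies to it. Since ``compatible with $K$'' includes the normalization $\det H=\det K$, one gets $\tr\sqrt{-1}\Lambda F_H=\tr\sqrt{-1}\Lambda F_K$, hence both $\deg_H(E)=\deg_K(E)$ and $|\Lambda F_H|\in L^\infty$ (and likewise $|\Lambda F_{H_\epsilon}|\in L^\infty$), so only the subsheaf comparison is at issue. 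Writing $f=K^{-1}H$ (positive, $K$-selfadjoint), the Bott--Chern identity expresses $\deg_H(S)-\deg_K(S)$, up to a constant, as $\int_{M\setminus\Sigma}\sqrt{-1}\pa\ol\pa\psi_S\wedge\omega_g^{n-1}$ with $\psi_S=\log\det\bigl((\pi_K f)|_S\bigr)$ on the locus where $S$ is a subbundle; as $\omega_g$ is K\"ahler, $\omega_g^{n-1}$ is $d$-closed, so this integrand equals $d\bigl(\sqrt{-1}\ol\pa\psi_S\wedge\omega_g^{n-1}\bigr)$ on $M\setminus\Sigma$ and, integrating over a shrinking tubular family around $\Sigma$, reduces to a boundary integral along $\Sigma$. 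I would make this boundary term vanish after showing that $\psi_S$ is at worst mildly singular along $\Sigma$ (with $L^1$-type control on $\ol\pa\psi_S$), and I would obtain the admissibility inclusion $\|\ol\pa_E\pi_H\|_H<\infty$ out of $\|\ol\pa_E\pi_K\|_K<\infty$ in the same breath, using the $\HE$ equation for $H$, the bound $|\Lambda F_K|\in L^\infty$, and a local sub/supersolution comparison for $\log\det(K^{-1}H)$ near $\Sigma$ on Simpson's model; in the $\AHE$ case these estimates are to be made uniform in $\epsilon$, which is harmless since $\sup|\Lambda F^\perp_{H_\epsilon}|\to0$.

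\emph{Conclusion and main obstacle.} Combining the two steps: if a $\HE$ metric $H$ compatible with $K$ exists, then for every $K$-admissible proper $S$ one has $\mu_K(S)=\mu_H(S)<\mu_H(E)=\mu_K(E)$, so $(E,\ol\pa_E,K)$ is stable; if an $\AHE$ structure exists, then $\mu_K(S)=\mu_{H_\epsilon}(S)\le\mu_{H_\epsilon}(E)+\delta_\epsilon=\mu_K(E)+\delta_\epsilon$ for all $\epsilon$, and letting $\epsilon\to0$ gives $\mu_K(S)\le\mu_K(E)$, so $(E,\ol\pa_E,K)$ is semi-stable. Step 1 is routine; the heart of the matter is the metric independence in Step 2, and inside it the control of the boundary integral along the \emph{arbitrary} Zariski closed set $\Sigma$ together with the near-$\Sigma$ behaviour of $\log\det(K^{-1}H)$ for a $\HE$ metric whose curvature is only assumed bounded --- precisely the ``no Stokes lemma for granted'' difficulty stressed in the introduction, and exactly the point Mochizuki flagged as requiring additional work.
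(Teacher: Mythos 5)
Your two-step architecture is exactly the paper's: first the ``same-metric'' Chern--Weil inequality for $H$ (resp.\ $H_\epsilon$, with the error term $\epsilon\vol(M)/\sqrt{\rank\mathcal S}$ coming from $\tr(\pi_{H_\epsilon}\circ\sqrt{-1}\Lambda F_{H_\epsilon}^\perp)$ over the finite-volume $M\setminus\Sigma$), then a transfer of $\deg(\mathcal S,\cdot)$ from $H$ (resp.\ $H_\epsilon$) back to $K$, with the case $|\ol\pa_E\pi_K|_K\notin L^2$ disposed of by $\deg(\mathcal S,K)=-\infty$. Step 1 is fine. The problem is that Step 2 --- which you yourself identify as the heart of the matter --- is not carried out: you end with ``I would make this boundary term vanish after showing that $\psi_S$ is at worst mildly singular along $\Sigma$\dots'', which is a plan, not a proof, and the route you sketch (a shrinking tubular family around $\Sigma$, a boundary integral along $\Sigma$, a local sub/supersolution comparison for $\log\det(K^{-1}H)$ near $\Sigma$) is not the right tool here: a Zariski closed $\Sigma$ need not be smooth, so there is no tubular neighbourhood to integrate over, and nothing in the hypotheses gives you pointwise asymptotics of $H$ near $\Sigma$.

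What you are missing is that no near-$\Sigma$ analysis is needed at all. The definition of ``compatible with $K$'' already hands you $|h|\in L^\infty$ and $|\ol\pa_Eh|\in L^2$ for $h=K^{-1}H$, and the paper's Lemma \ref{keylemma1} converts these into an $L^2$ bound on $\tr\bigl((h|_S)^{-1}\pa_{K|_S}(h|_S)\bigr)\wedge\omega_g^{n-1}$ via the identity $\ol\pa_S(h|_S)=\pi\circ\ol\pa_E h\circ\pi+\ol\pa_E\pi\circ(\id_E-\pi)\circ h\circ\pi$ (the second summand is controlled because $\deg(\mathcal S,H_1)$ finite forces $\ol\pa_E\pi\in L^2$ when $|\Lambda F_{H_1}|\in L^1$). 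Then the degree difference $\deg(\mathcal S,H)-\deg(\mathcal S,K)=\lim_j\int_{N_j}d\bigl(\sqrt{-1}\tr((h|_S)^{-1}\pa_{K|_S}(h|_S))\wedge\omega_g^{n-1}/(n-1)!\bigr)$ vanishes by Simpson's $L^2$ Stokes lemma (Proposition \ref{L2Stokes}), which applies because the model of Simpson admits an exhaustion function with $\Delta_g\phi\in L^\infty$ (Proposition \ref{assumptions}); this is the content of Lemma \ref{keylemma2}. In other words, the ``no Stokes lemma for granted'' difficulty you flag at the end is a concern for Theorems \ref{thm1} and \ref{thm2} (arbitrary closed $\Sigma$), but not for Theorem \ref{thm5}, which is stated on Simpson's model precisely so that the exhaustion-function Stokes lemma is available. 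Without this mechanism --- or an actual proof of your claimed decay of $\psi_S$ --- your Step 2, and hence the whole argument, is incomplete.
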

Gathering Simpson's existence result in \cite{Si1988} with the stability result in Theorem \ref{thm5}, we conclude a Kobayashi-Hitchin bijective correspondence for the model of Simpson. 
\begin{corollary}[Kobayashi-Hitchin bijective correspondence]
Let $(M,J,g)$ be a compact K\"{a}hler manifold and $\Sigma\subset M$ be a Zariski closed subset. Suppose $(E,\ol\pa_E,K)$ is an indecomposable Hermitian holomorphic vector bundle on $M\setminus\Sigma$ with $|\Lambda F_K|\in L^\infty(M\setminus\Sigma)$, then 
\begin{center}
	$(E,\ol\pa_E,K)$ is stable $\Leftrightarrow$ it admits a unique $\HE$ metric compatible with $K$.
\end{center}
\end{corollary}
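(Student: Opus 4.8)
The plan is to deduce the corollary by assembling the results already proved; no new analysis is involved, only a bookkeeping of the two implications.

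\emph{Stability $\Rightarrow$ existence and uniqueness of a $\HE$ metric compatible with $K$.} Since $g$ is K\"ahler we have $d\omega_g=0$, hence $\sqrt{-1}\pa\ol\pa\omega_g^{\dim_{\mathbb{C}}M-1}=0\ge 0$, and the hypothesis $|\Lambda F_K|\in L^\infty(M\setminus\Sigma)$ forces $|\Lambda F_K^\perp|\in L^\infty(M\setminus\Sigma)$ because $|\Lambda F_K^\perp|\le|\Lambda F_K|+\rank(E)^{-1/2}|\tr\Lambda F_K|$. Thus Theorem \ref{thm1} applies and produces a $\HE$ metric $H$ compatible with $K$; since these are exactly Simpson's hypotheses, one may equally invoke the existence theorem of \cite{Si1988}. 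For uniqueness one may cite Theorem \ref{thm3} directly. Alternatively --- this is the route announced after Theorem \ref{thm4} and to be carried out in Section 4 --- given two $\HE$ metrics $H_1,H_2$ compatible with $K$, one first applies Theorem \ref{thm4} to promote the stability of $(E,\ol\pa_E,K)$ to the stability of $(E,\ol\pa_E,H_1)$, and then runs the classical comparison argument of the compact case (cf. \cite{Ko1987,LT1995}): the self-adjoint positive endomorphism $s=H_1^{-1}H_2$ and its inverse are bounded, the two $\HE$ equations impose elliptic inequalities of definite sign on $\tr s$ and $\tr s^{-1}$, and stability of $(E,\ol\pa_E,H_1)$ rules out a nonconstant $s$, so $H_1$ and $H_2$ agree up to the scaling fixed by compatibility with $K$.

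\emph{Existence of a $\HE$ metric compatible with $K$ $\Rightarrow$ stability.} This is precisely Theorem \ref{thm5}(1), using that $(E,\ol\pa_E,K)$ is indecomposable. Combined with the first part, this yields the asserted equivalence.

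At the level of the corollary I expect no genuine obstacle: it is a formal consequence of Theorems \ref{thm1}, \ref{thm4} and \ref{thm5}. The real difficulty --- the reason a bijective correspondence was not previously available --- sits inside those theorems, in the phenomenon isolated by Mochizuki: over a noncompact base the slope, and hence the stability condition, depends on the chosen metric on $E$ through a Chern--Weil integral that need not converge absolutely or obey the usual integration by parts near $\Sigma$, so stability with respect to $K$ does not visibly transfer to stability with respect to a $\HE$ metric $H$. Theorems \ref{thm4} and \ref{thm5} establish exactly this transfer, in both directions, and once they are available the correspondence follows immediately.
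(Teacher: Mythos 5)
Your proposal is correct and assembles the corollary exactly as the paper does: existence from Simpson's theorem (equivalently Theorem \ref{thm1}, whose hypotheses you rightly verify are implied by the K\"ahler condition and $|\Lambda F_K|\in L^\infty$), uniqueness from Theorem \ref{thm3} via the Theorem \ref{thm4} route, and the converse from Theorem \ref{thm5}(1). No discrepancy with the paper's argument.
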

By Theorems \ref{thm2}, \ref{thm5}, we also obtain
\begin{corollary}[Semi-stable Kobayashi-Hitchin bijective correspondence]
	Let $(M,J,g)$ be a compact K\"{a}hler manifold and $\Sigma\subset M$ be a Zariski closed subset. Suppose $(E,\ol\pa_E,K)$ is a Hermitian holomorphic vector bundle on $M\setminus\Sigma$ with $|\Lambda F_K|\in L^\infty(M\setminus\Sigma)$, then 
	\begin{center}
		$(E,\ol\pa_E,K)$ is semi-stable $\Leftrightarrow$ it admits an $\AHE$ structure compatible with $K$. 
	\end{center}
\end{corollary}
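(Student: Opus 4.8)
The plan is to deduce both directions directly from Theorem \ref{thm2} and Theorem \ref{thm5}; the only work is to check that the present K\"ahler hypotheses specialize those of the cited theorems and to record a small remark for the reverse implication.

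For the implication that semi-stability gives an $\AHE$ structure, I would apply Theorem \ref{thm2}. One observes that a K\"ahler metric $g$ is in particular a Riemannian metric on all of $M$, so its restriction to $M\setminus\Sigma$ is a Hermitian metric equivalent to a Riemannian metric on $M$; that $d\omega_g=0$ makes $\omega_g^{\dim_{\mathbb{C}}M-1}$ closed, so $\sqrt{-1}\pa\ol\pa\omega_g^{\dim_{\mathbb{C}}M-1}=0\ge 0$; that a Zariski closed subset is a closed subset; and that the pointwise Cauchy--Schwarz bound $|\tr\Lambda F_K|\le\sqrt{\rank(E)}\,|\Lambda F_K|$ upgrades $|\Lambda F_K|\in L^\infty(M\setminus\Sigma)$ to $|\Lambda F_K^\perp|\in L^\infty(M\setminus\Sigma)$. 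With these checks Theorem \ref{thm2} applies verbatim and produces an $\AHE$ structure compatible with $K$.

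For the reverse implication I would invoke Theorem \ref{thm5}(2). If $(E,\ol\pa_E)$ is indecomposable this is immediate. For a general bundle I would either (i) write $(E,\ol\pa_E)=\bigoplus_i(E_i,\ol\pa_{E_i})$ as a direct sum of indecomposable holomorphic summands, apply Theorem \ref{thm5}(2) to each $E_i$, and use the elementary fact that a holomorphic bundle is semi-stable precisely when all of its indecomposable direct summands are semi-stable of one common slope; or (ii) observe that the Chern--Weil/degree estimate behind Theorem \ref{thm5}(2) --- which, for a saturated subsheaf $\mathcal F$ represented by its weakly holomorphic projection and for an approximating metric $H_\epsilon$ with $\det H_\epsilon=\det K$, bounds the $K$-slope of $\mathcal F$ by the $K$-slope of $E$ plus a constant multiple of $\sup|\Lambda F_{H_\epsilon}^\perp|_{H_\epsilon}$ --- never uses indecomposability, hence applies to every saturated subsheaf of $E$ directly and yields semi-stability upon letting $\epsilon\to 0$. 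Combining with the first part gives the stated equivalence.

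At the level of the corollary there is no genuine obstacle: the substantive analysis --- solving the perturbed flow while preserving $\det H_\epsilon=\det K$ on the possibly wild complement $M\setminus\Sigma$, extracting the $\AHE$ limit, and the degree comparison together with the vanishing of the error term as $\epsilon\to 0$ --- is precisely what Theorems \ref{thm2} and \ref{thm5} supply. The one point I would not gloss over, and which I have singled out above, is the passage from the indecomposable case to arbitrary bundles in the reverse implication.
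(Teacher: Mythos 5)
Your proof is correct and is essentially the paper's own argument: the corollary is obtained by combining Theorem \ref{thm2} (forward direction) with Theorem \ref{thm5}(2) (reverse direction), and your verification that the K\"ahler hypotheses specialize those of Theorem \ref{thm2} is exactly what is needed. Your observation that Theorem \ref{thm5} is stated for indecomposable bundles while the corollary is not is a genuine point the paper glosses over; of your two proposed fixes, (ii) is the right one --- the paper's proof of Theorem \ref{thm5}(2) indeed never invokes indecomposability --- whereas (i) would need extra care, since an $\AHE$ structure on $E$ does not obviously restrict to $\AHE$ structures on the indecomposable summands.
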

It may be mentioned that Mochizuki's discussion in \cite{Mo2020} relies on certain crucial properties of the Donaldson functional on the space of Hermitian metrics and a priori estimate of $\HE$ equation due to Simpson \cite{Si1988}, as well as the celebrated regularity result of weakly holomorphic subbundles due to Uhlenbeck-Yau \cite{UY1986}. In a different spirit, without the recourse to above deep results, our approach which is based on Theorem \ref{thm4}, seems simpler and has a merit that the proof can be readily applied to the case where the base space is $\mathbb{C}$ or $\mathbb{C}\times M$ for a compact K\"{a}hler manifold $M$, and also to the non-K\"{a}hler context. In fact, we also build
\begin{enumerate}
	\item Uniqueness of $\HE$ metrics on $\mathbb{C}$, $\mathbb{C}\times M$ for a compact K\"{a}hler manifold $M$, and noncompact Gauduchon manifolds, see Theorems \ref{C1}, \ref{CM1}, \ref{thm71}. 
	\item Kobayashi-Hitchin bijective correspondence on $\mathbb{C}$ and noncompact Gauduchon manifolds, see Theoremss \ref{C2}, \ref{thm72}. We should note that the existence part on $\mathbb{C}$ was previously given by Mochizuki \cite{Mo2020}.
	\item Semi-stable Kobayashi-Hitchin bijective correspondence on noncompact Gauduchon manifolds, see Theorem \ref{thm73}.
\end{enumerate}
\par We should remark that $\mathbb{C}$ and $\mathbb{C}\times M$ for a compact K\"{a}hler manifold $M$ are basic models satisfying Mochizuki's assumption (see \cite[Assumption 2.1]{Mo2020}). Compared with that of Simpson, the model of Mochizuki allows the possibility of infinite volume. But due to the infinite volume, one needs additional work to investigate the semi-stable Kobayashi-Hitchin bijective correspondence on $\mathbb{C}$ and it will be addressed elsewhere, see \cite{WZpreprint}.
\par
\textbf{Organization of this paper.} In Section 2, we give some preliminaries that will be used in later proofs. In Section 3, we study solvability of $\HE$ equations on noncompact manifolds and prove Theorems \ref{thm1}, \ref{thm2}. In Section 4, we investigate uniqueness and stabilities of $\HE$ bundles on noncompact manifolds by giving an alternative proof of Theorem \ref{thm3} and proving Theorems \ref{thm4}, \ref{thm5}. Related results on $\mathbb{C}$ and non-K\"{a}hler manifolds are also presented.
\section{Preliminaries}
Let $(E,\ol\pa_E)$ be holomorphic vector bundle on a Hermitian manifold $(N,J,g)$. For any two Hermitian metrics $K$, $H$ on $E$, we always denote by $h=K^{-1}H\in C^\infty(N,\End(E))$ the section defined by $H(\cdot,\cdot)=K(h(\cdot),\cdot)$. Throughout this paper, unless otherwise mentioned, inner products and norms are always taken with respect to $K$ and $g$.
\begin{defn}
	We say $H$ is compatible with $K$ if
	\begin{equation}\begin{split}
			\det K=\det H,\ |h|\in L^\infty(N),\ |\ol\pa_Eh|\in L^2(N).
	\end{split}\end{equation}
\end{defn}
Let us recall the definition of stability in \cite{Si1988}. We define
\begin{equation}\begin{split}
		\deg(E,K)=\int_N\tr\sqrt{-1}\Lambda F_{K}\dvol_g,
\end{split}\end{equation}
where $F_{K}=D_K^2$ and $D_K$ is the Chern connection. Given a saturated sub-sheaf $\mathcal{S}\subset\mathcal{O}_N(E)$, there exists a closed complex analytic subset $Z(\mathcal{S})$ of complex codimension at least two such that $\mathcal{S}|_{N\setminus Z(\mathcal{S})}$ becomes a locally free $\mathcal{O}_N$-module inheriting the holomorphic structure and metric from $E$, and therefore
$\deg(\mathcal{S},K)$ can be naturally defined by integrating outside $Z(\mathcal{S})$.
\begin{defn}\label{def}
	We say $(E,\ol\pa_E,K)$ is (semi-)stable if
	\begin{equation}\begin{split}
	\frac{\deg(\mathcal{S},K)}{\rank(\mathcal{S})}(\leq)<\frac{\deg(E,K)}{\rank(E)},
	\end{split}\end{equation}
for any nontrivial saturated sub-sheaf $\mathcal{S}$.
\end{defn}
\begin{remark}
	In fact, we have the Chern-Weil formula
	\begin{equation}\begin{split}\label{ChernWeilformula}
			\deg(\mathcal{S},K)=\int_{N\setminus Z(\mathcal{S})}(\tr(\sqrt{-1}\pi_K\circ\Lambda F_{K})-|\ol\pa_E\pi_K|^2)\dvol_g,
	\end{split}\end{equation}
	where $\pi_K$ denotes the projection onto $\mathcal{S}|_{N\setminus Z(\mathcal{S})}$ with respect to $K$.
\end{remark}
It is not hard to see
\begin{equation}\begin{split}
		D_H=D_K+h^{-1}\pa_{K}h,\ F_H=F_K+\ol\pa_E(h^{-1}\pa_Kh),
\end{split}\end{equation}
\begin{equation}\begin{split}
		\sqrt{-1}\Lambda\pa\ol\pa\tr h
		=(\sqrt{-1}\Lambda F_{K}-\sqrt{-1}\Lambda F_{H},h)
		+|\ol\pa_E h\circ h^{-\frac{1}{2}}|^2,
\end{split}\end{equation}
where $\pa_K$ is the $(1,0)$-component of $D_K$. By \cite[Section 7.4]{LT1995} and \cite[Section 4]{Si1988}, for $x\in N$ and a $K$-unitary basis $\{e_a\}_{a=1}^{\rank(E)}$, we set
\begin{equation}\begin{split}
		\log h(x)=\sum\limits_{a=1}^{\rank(E)}\lambda_ae_a\otimes e^a,\ \ol\pa_E\log h(x)=\sum\limits_{a,b=1}^{\rank(E)}A_a^be^a\otimes e_b,
\end{split}\end{equation}
\begin{equation}\begin{split}
		\Theta[\log h](\ol\pa_E\log h)(x)=\sum\limits_{a,b=1}^{\rank(E)}\Theta(\lambda_a,\lambda_b)A_a^be^a\otimes e_b,
\end{split}\end{equation}
where $\Theta(x,y)=\frac{e^{y-x}-1}{y-x}$ for $x\neq y$ and $\Theta(x,y)=1$ for $x=y$. Then we have
\begin{equation}\begin{split}\label{keyequality}
		\sqrt{-1}\Lambda\pa\ol\pa|\log h|^2
		&=-2\sqrt{-1}\Lambda\ol\pa<\pa_K\log h,\log h>
		\\&=-2\sqrt{-1}\Lambda\ol\pa<h^{-1}\pa_Kh,\log h>
		\\&=-2\sqrt{-1}\Lambda<\ol\pa_E(h^{-1}\pa_Kh),\log h>
		+2\sqrt{-1}\Lambda<h^{-1}\pa_Kh,\pa_K\log h>
		\\&=2<\sqrt{-1}\Lambda(F_{K}-F_{H}),\log h>
		+2<\Theta[\log h](\ol\pa_E\log h),\ol\pa_E\log h>,
\end{split}\end{equation}
where we have used 
\begin{equation}\begin{split}
		\tr(h^{-1}\pa_{K}h\circ\log h)=\tr(\log h\circ\pa_{K}\log h),
\end{split}\end{equation}
and the fact (see \cite[$(3.12)$]{NZ2018})
\begin{equation}\begin{split}
\sqrt{-1}\Lambda<h^{-1}\pa_{K}h,\pa_K\log h>=<\Theta[\log h](\ol\pa_E\log h),\ol\pa_E\log h>.
\end{split}\end{equation}
\par
We denote by $\nabla^H$ the connection induced by $D_{H}$ and the Chern connection $\nabla_g$ on $TN$, by $\nabla$ the connection induced by $D_{K}$ and $\nabla_g$. In the local holomorphic coordinate $(z^1,...,z^{\dim_{\mathbb{C}}N})$ of $N$ and the holomorphic frame $\{e_1,...,e_{\rank E}\}$ of $E$, we have
\begin{equation}\begin{split}\label{AA}
		\sqrt{-1}\Lambda\pa\ol\pa|h^{-1}\pa_{K}h|^2_H
		&=|\nabla^H(h^{-1}\pa_{K}h)|^2_H+g^{\ol{j}i}<\nabla^H_i\nabla^H_{\ol{j}}(h^{-1}\pa_{K}h),h^{-1}\pa_{K}h>_H
		\\&+g^{\ol{j}i}<h^{-1}\pa_{K}h,\nabla^H_{\ol{i}}\nabla^H_j(h^{-1}\pa_{K}h))>_H.
\end{split}\end{equation}
Let $T_{\nabla_g}$ denote the torsion of $\nabla_g$, it follows
\begin{equation}\begin{split}
		\nabla^H_i\nabla^H_{\ol{j}}(h^{-1}\pa_{K,m}h)
		&=\nabla^H_i\ol\pa_{\ol{j}}(h^{-1}\pa_{K}h)_m
		\\&=\nabla^H_iF_{H,m\ol{j}}-\nabla^H_iF_{K,m\ol{j}}
		\\&=\nabla^H_mF_{H,i\ol{j}}
		+T_{im}^lF_{K,l\ol{j}}
		\\&+T_{im}^l\ol\pa_{\ol{j}}(h^{-1}\pa_{K}h)_l-\nabla^H_iF_{K,m\ol{j}},
\end{split}\end{equation}
where $T_{im}^n$ is given by $T_{\nabla_g}(\frac{\pa}{\pa z^i},\frac{\pa}{\pa z^m})=T_{im}^l\frac{\pa}{\pa z^l}$ and we have used the relation
\begin{equation}\begin{split}
		0&=\nabla^H_iF_{H,m\ol{j}}-\nabla^H_mF_{H,i\ol{j}}
		+\nabla^H_{\ol{j}}F_{H,im}
		-T_{im}^lF_{H,l\ol{j}}
		\\&=\nabla^H_iF_{H,m\ol{j}}-\nabla^H_mF_{H,i\ol{j}}
		-T_{im}^lF_{K,l\ol{j}}+T_{im}^l\ol\pa_{\ol{j}}(h^{-1}\pa_{K}h)_l,
\end{split}\end{equation}
which comes from the Bianchi identity, $T^{1,1}_{\nabla_g}=0$ and $T_{\nabla_g}(\frac{\pa}{\pa z^i},\frac{\pa}{\pa z^m})\in T^{1,0}N$. Hence
\begin{equation}\begin{split}\label{BB}
		&g^{\ol{j}i}<\nabla^H_i\nabla^H_{\ol{j}}(h^{-1}\pa_{K}h),h^{-1}\pa_{K}h>_H
		\\&\geq-C_1(|\nabla F_{K}|_H+|T_{\nabla_g}||F_{K}|_H+|\nabla\Lambda F_{H}|_H)|h^{-1}\pa_{K}h|_H
		\\&-C_1(|F_{K}|_H+|\Lambda F_{H}|_H)|h^{-1}\pa_{K}h|^2_H
		\\&-C_1|T||h^{-1}\pa_{K}h|_H|\nabla^H(h^{-1}\pa_{K}h)|_H,
\end{split}\end{equation}
where $C_1$ is positive constant independent of $K$ and $H$. Moreover, it holds that
\begin{equation}\begin{split}
		\nabla^H_{\ol{i}}\nabla^H_j(h^{-1}\pa_{K}h)_{m\alpha}^\beta
		&=\nabla^H_j\nabla^H_{\ol{i}}(h^{-1}\pa_{K}h)_{m\alpha}^\beta
		-R_{j\ol{i}m}^l(h^{-1}\pa_{K}h)_{l\alpha}^\beta
		\\&+F_{H,j\ol{i}\gamma}^\beta(h^{-1}\pa_{K}h)_{m\alpha}^\gamma-F_{H,j\ol{i}\alpha}^\gamma(h^{-1}\pa_{K}h)_{m\gamma}^\beta,
\end{split}\end{equation}
where $R_{j\ol{i}m}^l$ is given by $R_{\nabla_g}(\frac{\pa}{\pa z^j},\frac{\pa}{\pa\ol{z}^i})\frac{\pa}{\pa z^m}=R_{j\ol{i}m}^l\frac{\pa}{\pa z^l}$. Hence there is positive constant $C_2$ doesn't depend on $K$ and $H$ such that
\begin{equation}\begin{split}\label{CC}
		&g^{\ol{j}i}<h^{-1}\pa_{K}h,\nabla^H_{\ol{i}}\nabla^H_j(h^{-1}\pa_{K}h)>_H
		\\&\geq-C_2(|\nabla F_{K}|_H+|T_{\nabla_g}||F_{K}|_H+|\nabla\Lambda F_{H}|_H)|h^{-1}\pa_{K}h|_H
		\\&-C_2(|R_{\nabla_g}|+|F_{K}|_H+|\Lambda F_{H}|_H)|h^{-1}\pa_{K}h|^2_H
		\\&-C_2|T||h^{-1}\pa_{K}h|_H|\nabla^H(h^{-1}\pa_{K}h)|_H.
\end{split}\end{equation}
Combing $(\ref{AA})$, $(\ref{BB})$, $(\ref{CC})$ together, we can conclude
\begin{lemma}\label{DD}
	There is a positive constant $C_3$ independent of $K$ and $H$ such that
	\begin{equation}\begin{split}
			&\sqrt{-1}\Lambda\pa\ol\pa|h^{-1}\pa_{K}h|^2_H
			\\&\geq-C_3(|\nabla F_{K}|_H+|T_{\nabla_g}||F_{K}|_H+|\nabla\Lambda F_{H}|_H)|h^{-1}\pa_{K}h|_H
			\\&-C_3(|T_{\nabla_g}|^2+|R_{\nabla_X}|+|F_{K}|_H+|\Lambda F_{H}|_H)|h^{-1}\pa_{K}h|^2_H
			\\&+\frac{1}{2}|\nabla^H(h^{-1}\pa_{K}h)|^2_H.
	\end{split}\end{equation}
\end{lemma}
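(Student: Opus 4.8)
The plan is to assemble the pointwise Bochner-type identity already derived above and then clean up a single cross term by Young's inequality. First I would start from the exact identity $(\ref{AA})$, which expresses $\sqrt{-1}\Lambda\pa\ol\pa|h^{-1}\pa_{K}h|^2_H$ as $|\nabla^H(h^{-1}\pa_{K}h)|^2_H$ plus the two ``Hessian'' contractions $g^{\ol{j}i}\langle\nabla^H_i\nabla^H_{\ol{j}}(h^{-1}\pa_{K}h),h^{-1}\pa_{K}h\rangle_H$ and $g^{\ol{j}i}\langle h^{-1}\pa_{K}h,\nabla^H_{\ol{i}}\nabla^H_j(h^{-1}\pa_{K}h)\rangle_H$. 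These two contractions are bounded from below in $(\ref{BB})$ and $(\ref{CC})$ respectively, whose proofs rest on commuting covariant derivatives, the Bianchi identity, and $T^{1,1}_{\nabla_g}=0$; crucially the constants $C_1,C_2$ occurring there depend only on the fixed background geometry of $(N,J,g)$ and not on $K$ or $H$.

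Substituting $(\ref{BB})$ and $(\ref{CC})$ into $(\ref{AA})$, every term is already of the form appearing in the statement except for the sum of the two mixed contributions,
\[
-(C_1+C_2)\,|T_{\nabla_g}|\,|h^{-1}\pa_{K}h|_H\,|\nabla^H(h^{-1}\pa_{K}h)|_H .
\]
I would estimate this by Young's inequality,
\[
-(C_1+C_2)\,|T_{\nabla_g}|\,|h^{-1}\pa_{K}h|_H\,|\nabla^H(h^{-1}\pa_{K}h)|_H \;\geq\; -\tfrac12\,|\nabla^H(h^{-1}\pa_{K}h)|^2_H-\tfrac{(C_1+C_2)^2}{2}\,|T_{\nabla_g}|^2\,|h^{-1}\pa_{K}h|^2_H .
\]
The first term on the right cancels half of the $|\nabla^H(h^{-1}\pa_{K}h)|^2_H$ contributed by $(\ref{AA})$, leaving exactly $\tfrac12|\nabla^H(h^{-1}\pa_{K}h)|^2_H$, while the second term is absorbed into the coefficient of $|T_{\nabla_g}|^2|h^{-1}\pa_{K}h|^2_H$ in the statement. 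Taking $C_3=\max\{C_1,C_2\}+\tfrac{(C_1+C_2)^2}{2}$ (or anything larger) then yields the asserted inequality, and this $C_3$ is manifestly independent of $K$ and $H$.

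Since every step is either the exact identity $(\ref{AA})$, the previously established bounds $(\ref{BB})$ and $(\ref{CC})$, or the elementary Young inequality, I do not expect a genuine obstacle here: the substance of the lemma lies in the derivation of $(\ref{BB})$ and $(\ref{CC})$, and this last step is essentially bookkeeping. The only point worth double-checking is that the new constant $C_3$ really is independent of the bundle metrics, which is immediate from the corresponding independence of $C_1,C_2$ and the fact that the Young-inequality constant is absolute.
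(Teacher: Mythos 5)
Your proposal is correct and follows exactly the route the paper intends: the paper itself simply states that Lemma \ref{DD} follows by combining $(\ref{AA})$, $(\ref{BB})$, $(\ref{CC})$, and the only content left implicit there is precisely the Young-inequality absorption of the cross term $|T_{\nabla_g}||h^{-1}\pa_{K}h|_H|\nabla^H(h^{-1}\pa_{K}h)|_H$ into $\tfrac12|\nabla^H(h^{-1}\pa_{K}h)|^2_H$ plus a $|T_{\nabla_g}|^2|h^{-1}\pa_{K}h|^2_H$ term, which you carry out correctly with a constant $C_3$ depending only on $C_1,C_2$ and hence independent of $K$ and $H$.
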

\section{Solvability of Hermitian-Einstein equations}
\begin{lemma}\label{tracelemma}
	For a compact Hermitian manifold $(N,J,g)$ with boundary and suppose $H$ is a solution of the following perturbed heat flow
	\begin{equation}\begin{split}\label{flow}
			\left\{ \begin{array}{ll}
				H^{-1}\frac{\pa H}{\pa t}=-2\left(\sqrt{-1}(\Lambda F_H-\frac{\tr\Lambda F_K}{\rank(E)}\id_E)+\epsilon\log(K^{-1}H)\right),\\
				H(0)=K,\ H|_{\pa N}=K|_{\pa N}.
			\end{array}\right.
	\end{split}\end{equation}
	Then we have $\det K=\det H$.
\end{lemma}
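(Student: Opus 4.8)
The plan is to show that the quantity $\det(K^{-1}H)$ is constant in $t$ by computing its time derivative along the flow \eqref{flow}, and then to invoke the initial and boundary conditions $H(0)=K$, $H|_{\partial N}=K|_{\partial N}$ to pin that constant value down to $1$. First I would record the standard variational identity: if $h=K^{-1}H$ and $H^{-1}\partial_t H = -2S$ for some endomorphism $S\in C^\infty(N,\End(E))$, then
\begin{equation}\begin{split}
\frac{\partial}{\partial t}\log\det h = \tr\!\left(h^{-1}\frac{\partial h}{\partial t}\right) = \tr\!\left(H^{-1}\frac{\partial H}{\partial t}\right) = -2\tr S,
\end{split}\end{equation}
using that $\det h = \det H/\det K$ with $K$ fixed, and the Jacobi formula for the derivative of a determinant. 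Here the right-hand side $S$ of \eqref{flow} is
\begin{equation}\begin{split}
S = \sqrt{-1}\Lambda F_H - \frac{\tr\Lambda F_K}{\rank(E)}\id_E + \epsilon\log(K^{-1}H),
\end{split}\end{equation}
so I would compute $\tr S$ term by term.

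The key pointwise facts are: $\tr(\sqrt{-1}\Lambda F_H) = \sqrt{-1}\Lambda\,\tr F_H = \sqrt{-1}\Lambda\,\partial\bar\partial\log\det H$ (the trace of the Chern curvature of the determinant line bundle, computed in the induced metric $\det H$), and likewise $\tr(\sqrt{-1}\Lambda F_K) = \sqrt{-1}\Lambda\,\partial\bar\partial\log\det K$. Hence
\begin{equation}\begin{split}
\tr S = \sqrt{-1}\Lambda\,\partial\bar\partial\log\det H - \sqrt{-1}\Lambda\,\partial\bar\partial\log\det K + \epsilon\,\tr\log(K^{-1}H) = \sqrt{-1}\Lambda\,\partial\bar\partial\log\det h + \epsilon\log\det h,
\end{split}\end{equation}
where I used $\tr\log(K^{-1}H) = \log\det(K^{-1}H) = \log\det h$. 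Writing $u=\log\det h$, the two displays above combine to give the scalar parabolic equation
\begin{equation}\begin{split}
\frac{\partial u}{\partial t} = -2\sqrt{-1}\Lambda\,\partial\bar\partial u - 2\epsilon u \quad\text{on } N,\qquad u(0)=0,\qquad u|_{\partial N}=0,
\end{split}\end{equation}
since $\det h = 1$ at $t=0$ and on $\partial N$. (That $u|_{\partial N}=0$ uses the boundary condition $H|_{\partial N}=K|_{\partial N}$, which forces $h|_{\partial N}=\id_E$.)

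It remains to conclude $u\equiv 0$, which is a routine maximum-principle argument on the compact manifold-with-boundary $N$: the operator $-2\sqrt{-1}\Lambda\partial\bar\partial$ is (up to a positive factor depending on $g$) the Laplace-Beltrami-type operator with no zeroth-order term, and $-2\epsilon$ has the favourable sign, so $u\equiv 0$ is the unique solution of this linear initial-boundary value problem — one can see this directly by considering $\max_{N}u$ and $\min_N u$ as functions of $t$, or by testing against $u$ and integrating by parts. Therefore $\log\det(K^{-1}H)\equiv 0$, i.e. $\det H = \det K$ for all $t$, as claimed. I do not anticipate a genuine obstacle here: the only mild subtlety is making sure the trace of the curvature term is handled correctly (it is the curvature of the determinant bundle, so the traceless correction $-\frac{\tr\Lambda F_K}{\rank(E)}\id_E$ contributes $-\tr\Lambda F_K$ and exactly cancels the $\tr\Lambda F_K$ coming from nowhere — rather, it is the term that turns $\tr\Lambda F_H$ into the difference $\tr\Lambda F_H - \tr\Lambda F_K$), and that the perturbation term contributes precisely $\epsilon\log\det h$; both are immediate once one writes everything in terms of $u=\log\det h$.
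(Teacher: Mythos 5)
Your strategy --- reduce everything to the scalar function $u=\log\det(K^{-1}H)$, show that it solves a linear parabolic initial-boundary value problem with zero data, and conclude $u\equiv 0$ by the maximum principle --- is sound, and it is in fact a little more economical than the paper's argument. The paper applies the maximum principle one level up: it shows that $e^{2\epsilon t}\tr\bigl(\sqrt{-1}(\Lambda F_H-\frac{\tr\Lambda F_K}{\rank(E)}\id_E)+\epsilon\log(K^{-1}H)\bigr)$ itself satisfies $(\pa_t-2\sqrt{-1}\Lambda\pa\ol\pa)(\cdot)=0$ with vanishing initial and boundary values, which requires differentiating the curvature along the flow via $\pa_t\tr F_H=-2\tr\ol\pa_E\pa_H(\cdots)$; the vanishing of that trace then yields $\pa_t\log\det(K^{-1}H)=0$. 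Your route bypasses the variation-of-curvature computation entirely, using only the pointwise identity relating $\tr F_H-\tr F_K$ to $\pa\ol\pa u$ together with the Jacobi formula, and the two arguments are otherwise the same scalar maximum-principle argument in different packaging.

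There is, however, a sign error you must fix. With the paper's conventions ($F_H=F_K+\ol\pa_E(h^{-1}\pa_Kh)$, hence $\tr F_H-\tr F_K=\ol\pa\pa\log\det h=-\pa\ol\pa\log\det h$), one has
\begin{equation*}
\tr\sqrt{-1}\Lambda F_H-\tr\sqrt{-1}\Lambda F_K=-\sqrt{-1}\Lambda\pa\ol\pa u,
\end{equation*}
not $+\sqrt{-1}\Lambda\pa\ol\pa u$; this sign is also forced by consistency with the paper's identity $\sqrt{-1}\Lambda\pa\ol\pa\tr h=(\sqrt{-1}\Lambda(F_K-F_H),h)+|\ol\pa_Eh\circ h^{-\frac{1}{2}}|^2$, whose last term is nonnegative. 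Consequently $\tr S=-\sqrt{-1}\Lambda\pa\ol\pa u+\epsilon u$, and the scalar equation is the \emph{forward} damped heat equation $\pa_tu=2\sqrt{-1}\Lambda\pa\ol\pa u-2\epsilon u$. As you wrote it, $\pa_tu=-2\sqrt{-1}\Lambda\pa\ol\pa u-2\epsilon u$ is backward parabolic, and neither the parabolic maximum principle nor the energy argument you invoke applies to it (uniqueness for a backward equation with prescribed initial data is the backward-uniqueness problem, a genuinely harder statement). Once the sign is corrected, your conclusion $u\equiv 0$, hence $\det H=\det K$, is complete.
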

\begin{proof}
	We compute
	\begin{equation}\begin{split}\label{trt}
			&(\frac{\pa}{\pa t}-2\sqrt{-1}\Lambda\pa\ol\pa)\left(e^{2\epsilon t}
			\tr(\sqrt{-1}(\Lambda F_H-\frac{\tr\Lambda F_K}{\rank(E)}\id_E)+\epsilon\log(K^{-1}H))\right)
			\\&=2\epsilon e^{2\epsilon t}\tr\sqrt{-1}(\Lambda F_H-\frac{\tr\Lambda F_K}{\rank(E)}\id_E)
			+2\epsilon^2e^{2\epsilon t}\log\det(K^{-1}H)
			\\&+e^{2\epsilon t}(\frac{\pa}{\pa t}-2\sqrt{-1}\Lambda\pa\ol\pa)\tr\sqrt{-1}(\Lambda F_H-\frac{\tr\Lambda F_K}{\rank(E)}\id_E)
			\\&+\epsilon e^{2\epsilon t}(\frac{\pa}{\pa t}-2\sqrt{-1}\Lambda\pa\ol\pa)\log\det(K^{-1}H)
			\\&=e^{2\epsilon t}\frac{\pa}{\pa t}\tr\sqrt{-1}\Lambda F_{H}
			-2e^{2\epsilon t}\sqrt{-1}\Lambda\pa\ol\pa\tr\sqrt{-1}(\Lambda F_H-\frac{\tr\Lambda F_K}{\rank(E)}\id_E)
			\\&-2\epsilon e^{2\epsilon t}\sqrt{-1}\Lambda\pa\ol\pa\log\det(K^{-1}H)
			\\&=-2e^{2\epsilon t}\tr\left(\sqrt{-1}\Lambda\ol\pa_E\pa(\sqrt{-1}(\Lambda F_H-\frac{\tr\Lambda F_K}{\rank(E)}\id_E)+\epsilon\log(K^{-1}H))\right)
			\\&-2e^{2\epsilon t}\sqrt{-1}\Lambda\pa\ol\pa\tr\sqrt{-1}(\Lambda F_H-\frac{\tr\Lambda F_K}{\rank(E)}\id_E)
			-2\epsilon e^{2\epsilon t}\sqrt{-1}\Lambda\pa\ol\pa\log\det(K^{-1}H)
			\\&=0,
	\end{split}\end{equation}
	where we have used
	\begin{equation}\begin{split}
			\frac{\pa}{\pa t}\log\det(K^{-1}H)
			&=\tr(H^{-1}\frac{\pa H}{\pa t})
			\\&=-2\tr\sqrt{-1}(\Lambda F_H-\frac{\tr\Lambda F_K}{\rank(E)}\id_E)
			-2\epsilon\tr\log(K^{-1}H)
			\\&=-2\tr\sqrt{-1}(\Lambda F_H-\frac{\tr\Lambda F_K}{\rank(E)}\id_E)
			-2\epsilon\log\det(K^{-1}H),
	\end{split}\end{equation}
	and
	\begin{equation}\begin{split}
			\frac{\pa\tr F_{H}}{\pa t}
			&=\frac{\pa\tr\ol\pa_E\left((K^{-1}H)^{-1}\pa_K(K^{-1}H)\right)}{\pa t}
			\\&=\tr\ol\pa_E\pa_{H}\left((K^{-1}H)^{-1}\frac{\pa K^{-1}H}{\pa t}\right)
			\\&=-2\tr\ol\pa_E\pa_H\left(\sqrt{-1}(\Lambda F_H-\frac{\tr\Lambda F_K}{\rank(E)}\id_E)+\epsilon\log(K^{-1}H)\right).
	\end{split}\end{equation}
	\par In addition, we note on $N\times\{t=0\}$ and $\ol\pa N\times[0,t)$,
	\begin{equation}\begin{split}
			e^{2\epsilon t}
			\tr\left(\sqrt{-1}(\Lambda F_H-\frac{\tr\Lambda F_K}{\rank(E)}\id_E)+\epsilon\log(K^{-1}H)\right)=0,
	\end{split}\end{equation}
	it follows from the maximum principle that on $N\times[0,t)$,
	\begin{equation}\begin{split}
			e^{2\epsilon t}
			\tr\left(\sqrt{-1}(\Lambda F_H-\frac{\tr\Lambda F_K}{\rank(E)}\id_E)+\epsilon\log(K^{-1}H)\right)=0.
	\end{split}\end{equation}
	Thus we finish the proof by noting
	\begin{equation}\begin{split}
			\frac{\pa}{\pa t}\log\det(K^{-1}H)
			&=-2\tr\left(\sqrt{-1}(\Lambda F_H-\frac{\tr\Lambda F_K}{\rank(E)}\id_E)+\epsilon\log(K^{-1}H)\right)
			\\&=0.
	\end{split}\end{equation}
\end{proof}
\begin{remark}\label{flowremark}
	It seems more natural to consider
	\begin{equation}\begin{split}
			\left\{ \begin{array}{ll}
				H^{-1}\frac{\pa H}{\pa t}=-2(\sqrt{-1}\Lambda F_{H}^\perp+\epsilon\log(K^{-1}H)),\\
				H(0)=K.
			\end{array}\right.
	\end{split}\end{equation}
	For this flow, one may not run the above argument to obtain Lemma \ref{tracelemma} since
	\begin{equation}\begin{split}
			&(\frac{\pa}{\pa t}-2\sqrt{-1}\Lambda\pa\ol\pa)\left(e^{2\epsilon t}
			\tr(\sqrt{-1}\Lambda F_{H}^\perp+\epsilon\log(K^{-1}H))\right)
			\\&=(\frac{\pa}{\pa t}-2\sqrt{-1}\Lambda\pa\ol\pa)\epsilon e^{2\epsilon t}\log\det(K^{-1}H)
			\\&=\epsilon e^{2\epsilon t}\left(2\epsilon\log\det(K^{-1}H)+\tr(H^{-1}\frac{\pa H}{\pa t})-2\sqrt{-1}\Lambda\pa\ol\pa\log\det(K^{-1}H)\right)
			\\&=-2\epsilon e^{2\epsilon t}\sqrt{-1}\Lambda\pa\ol\pa\log\det(K^{-1}H).
	\end{split}\end{equation}
\end{remark}
\begin{lemma}\label{estimatelemma}
	Let $H$, $\tilde{H}$ be two solutions to the perturbed heat flow $(\ref{flow})$ on a compact Hermitian manifold $(N,J,g)$, then it holds
	\begin{equation}\begin{split}\label{distancet}
			(\frac{\pa}{\pa t}-2\sqrt{-1}\Lambda\pa\ol\pa)\sigma(H,\tilde{H})
			\leq-|\ol\pa_E\tilde{h}\circ\tilde{h}^{-\frac{1}{2}}|^2_H-|\ol\pa_E\tilde{h}^{-1}\circ\tilde{h}^{\frac{1}{2}}|^2_{\tilde{H}},
	\end{split}\end{equation}
	\begin{equation}\begin{split}\label{loghlaplace}
			(\frac{\pa}{\pa t}-2\sqrt{-1}\Lambda\pa\ol\pa)|\log h|^2&\leq4|\Lambda F_{K}^\perp||\log h|-4\epsilon|\log h|^2,
	\end{split}\end{equation}
	\begin{equation}\begin{split}\label{modulet}
			(\frac{\pa}{\pa t}-2\sqrt{-1}\Lambda\pa\ol\pa)|\sqrt{-1}\Lambda F_{H}^\perp+\epsilon\log h|_H^2
			&\leq-4|\ol\pa_E(\sqrt{-1}\Lambda F_{H}^\perp+\epsilon\log h)|_H^2,
	\end{split}\end{equation}
	where $h=K^{-1}H$, $\tilde{h}=H^{-1}\tilde{H}$ and $\sigma(H,\tilde{H})=\tr(\tilde{h}+\tilde{h}^{-1})-2\rank(E)$.
\end{lemma}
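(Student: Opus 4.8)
The plan is to establish the three evolution inequalities by differentiating each quantity along the flow \eqref{flow} and invoking the parabolic Bochner-type identities assembled in Section 2. I would treat the three estimates in a natural order: first \eqref{loghlaplace}, which is the simplest, then \eqref{distancet}, and finally \eqref{modulet}, each following the template $\frac{\pa}{\pa t}$-term plus $-2\sqrt{-1}\Lambda\pa\ol\pa$-term, with the elliptic piece supplied by the Section 2 computations specialized to the present pair of metrics.

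For \eqref{loghlaplace}, I would start from the identity \eqref{keyequality} applied with the reference metric $K$ and the evolving metric $H=Kh$, which gives
\begin{equation*}
\sqrt{-1}\Lambda\pa\ol\pa|\log h|^2
=2\langle\sqrt{-1}\Lambda(F_K-F_H),\log h\rangle
+2\langle\Theta[\log h](\ol\pa_E\log h),\ol\pa_E\log h\rangle .
\end{equation*}
The second term on the right is nonnegative since $\Theta\geq0$, so it may be discarded. For the $t$-derivative one computes $\frac{\pa}{\pa t}|\log h|^2=2\langle\log h,\frac{\pa}{\pa t}\log h\rangle$ and, using $H^{-1}\frac{\pa H}{\pa t}=-2(\sqrt{-1}\Lambda F_H-\frac{\tr\Lambda F_K}{\rank E}\id_E+\epsilon\log h)$ together with $\det H=\det K$ (Lemma \ref{tracelemma}), one rewrites $\sqrt{-1}\Lambda F_H-\frac{\tr\Lambda F_K}{\rank E}\id_E=\sqrt{-1}\Lambda F_H^\perp+\frac{1}{\rank E}(\tr\sqrt{-1}\Lambda F_H-\tr\sqrt{-1}\Lambda F_K)\id_E$; the trace term is exactly $\sqrt{-1}\Lambda\pa\ol\pa\log\det h=0$ by Lemma \ref{tracelemma}, so $\frac{\pa}{\pa t}\log h$ effectively becomes $-2(\sqrt{-1}\Lambda F_H^\perp+\epsilon\log h)$ modulo the substitution that also turns $F_K$ into $F_K^\perp$ in the elliptic term. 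Pairing with $\log h$, combining, and using Cauchy--Schwarz $|\langle \sqrt{-1}\Lambda F_K^\perp,\log h\rangle|\leq |\Lambda F_K^\perp||\log h|$ yields the stated bound $4|\Lambda F_K^\perp||\log h|-4\epsilon|\log h|^2$.

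For \eqref{distancet} I would use the classical Donaldson-type computation: with $\tilde h=H^{-1}\tilde H$ and $\sigma(H,\tilde H)=\tr(\tilde h+\tilde h^{-1})-2\rank E\geq0$, differentiate $\sigma$ along $t$ using that both $H$ and $\tilde H$ solve \eqref{flow}, so $\tilde h$ satisfies a parabolic equation in which the zeroth-order terms $-2(\sqrt{-1}\Lambda F_{\tilde H}^\perp+\epsilon\log\tilde k)$ vs. $-2(\sqrt{-1}\Lambda F_H^\perp+\epsilon\log k)$ enter; the key cancellations are that the $\epsilon\log$-terms contribute a favorable (nonpositive) term after pairing — this is where one checks convexity of the relevant function — and the curvature difference $F_{\tilde H}-F_H=\ol\pa_E(\tilde h^{-1}\pa_H\tilde h)$ produces the gradient square $-|\ol\pa_E\tilde h\circ\tilde h^{-1/2}|_H^2-|\ol\pa_E\tilde h^{-1}\circ\tilde h^{1/2}|_{\tilde H}^2$ after integrating the Laplacian by parts against $\tr$. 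For \eqref{modulet}, set $u=\sqrt{-1}\Lambda F_H^\perp+\epsilon\log h$, note the flow reads $H^{-1}\frac{\pa H}{\pa t}=-2u$, and compute $(\frac{\pa}{\pa t}-2\sqrt{-1}\Lambda\pa\ol\pa)|u|_H^2$; the $\pa\ol\pa$-part generates $-2|\ol\pa_E u|_H^2$ from the Chern--Weil/Bochner identity (as in \eqref{AA}), while the $t$-part, after differentiating $\sqrt{-1}\Lambda F_H$ (which gives $\sqrt{-1}\Lambda\ol\pa_E\pa_H(-2u)=2\sqrt{-1}\Lambda\pa\ol\pa u$ up to sign, since $F_H^\perp$ evolves by a heat operator in $u$) and $\epsilon\log h$ (whose $t$-derivative is $-2u$), combines with the elliptic term to yield the clean inequality $-4|\ol\pa_E u|_H^2$; the trace-free projection is compatible with the flow because $\det H=\det K$ keeps $\tr u$ pointwise harmonic, hence it drops out.

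The main obstacle I expect is bookkeeping the interaction between the trace-free projection $(\cdot)^\perp$ and the $\epsilon\log(K^{-1}H)$ correction term: the flow \eqref{flow} is written with $\frac{\tr\Lambda F_K}{\rank E}\id_E$ rather than $\frac{\tr\Lambda F_H}{\rank E}\id_E$, so one must carefully use Lemma \ref{tracelemma} (equivalently $\sqrt{-1}\Lambda\pa\ol\pa\log\det h=0$ along the flow) to replace $F_K$ by $F_K^\perp$ in the reaction terms and to guarantee that no stray trace terms spoil the sign. Verifying that the $\epsilon$-terms produce the exact coefficients $-4\epsilon|\log h|^2$ in \eqref{loghlaplace} and the nonpositive contributions in \eqref{distancet}, \eqref{modulet} — which ultimately rests on the elementary convexity of $x\mapsto x\log x$ type expressions and on $\Theta\geq0$ — is routine but must be done with care regarding factors of $2$.
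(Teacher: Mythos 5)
Your proposal takes the paper's own route: the paper gives no details for this lemma, saying only that the three inequalities follow by a direct calculation based on Lemma \ref{tracelemma} (precisely the point you isolate --- $\det H=\det K$ lets one rewrite the flow as $H^{-1}\frac{\pa H}{\pa t}=-2(\sqrt{-1}\Lambda F_H^\perp+\epsilon\log h)$ and replace $F_K$ by $F_K^\perp$ in the reaction terms after pairing with the trace-free $\log h$) and referring to \cite{ZZZ2021} for the computation. Your sketch is correct in substance; the one imprecision is the shorthand ``$\pa_t(\epsilon\log h)=-2\epsilon u$'' in the argument for $(\ref{modulet})$ --- what holds is $h^{-1}\pa_t h=-2u$, and passing to $\pa_t\log h$ requires the standard monotonicity/integral formula for the derivative of the matrix logarithm, exactly as handled in the cited reference.
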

\begin{proof}
	Based on Lemma \ref{tracelemma}, the proof is a direct calculation, see \cite{ZZZ2021} for example.
\end{proof}
Using Lemmas \ref{tracelemma}, \ref{estimatelemma} and following \cite{Do1992,Zh2005}, we can conclude the existence result on Dirichlet problems of perturbed $\HE$ equations, see \cite[Theorem 5.1]{ZZZ2021}.
\begin{proposition}\label{Dirichletsolution}
	Let $(E,\ol\pa_E,K)$ be a Hermitian holomorphic vector bundle on a compact Hermitian manifold $(N,J,g)$ with nonempty boundary. For any $\epsilon\geq0$, there exists a unique Hermitian metric $H_\epsilon$ such that
	\begin{equation}\begin{split}
			\sqrt{-1}\Lambda F_{H_\epsilon}^\perp+\epsilon\log(K^{-1}H_\epsilon)=0,\ H_\epsilon|_{\pa N}=K|_{\pa N}.
	\end{split}\end{equation}
	Moreover if $\epsilon>0$, it holds
	\begin{equation}\begin{split}
			|\log(K^{-1}H_\epsilon)|\leq\epsilon^{-1}\sup\limits_{N}|\Lambda F_{K}^\perp|.
	\end{split}\end{equation}
\end{proposition}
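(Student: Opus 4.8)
The plan is to produce $H_\epsilon$ as the long-time limit of the perturbed heat flow $(\ref{flow})$ issuing from $K$ with boundary value $K|_{\pa N}$, following \cite{Do1992,Zh2005,ZZZ2021}, and then to deduce uniqueness from a maximum principle. First I would appeal to standard quasilinear parabolic theory: writing $h=K^{-1}H$, the flow $(\ref{flow})$ is strictly parabolic, so it has a unique smooth solution on a maximal interval $[0,T_{\max})$ with $h(0)=\id_E$ and $h|_{\pa N}=\id_E$, the corner $\pa N\times\{0\}$ being treated as in \cite{Do1992}.

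The heart of the argument is the a priori estimates, where the differential inequalities of Section 2 and of Lemma \ref{estimatelemma} enter. By Lemma \ref{tracelemma} the flow preserves $\det H=\det K$, hence $\det h\equiv1$ and $\log h$ is trace-free; in particular $\log h$ vanishes on the parabolic boundary $(N\times\{0\})\cup(\pa N\times[0,T_{\max}))$. The maximum principle applied to $(\ref{loghlaplace})$ then gives a uniform $C^0$ bound on $\log h$, and when $\epsilon>0$, evaluating $(\ref{loghlaplace})$ at an interior space--time maximum of $|\log h|^2$ forces $0\le 4|\Lambda F_K^\perp|\,|\log h|-4\epsilon|\log h|^2$, hence $|\log h|\le\epsilon^{-1}\sup_N|\Lambda F_K^\perp|$; this is exactly the asserted estimate and it survives passage to the limit. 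Granted this $C^0$ control, Lemma \ref{DD} together with $(\ref{modulet})$ yields, via cut-off functions and the maximum principle, interior bounds on $|h^{-1}\pa_K h|_H$ and on $|\sqrt{-1}\Lambda F_H^\perp+\epsilon\log h|_H$; the matching boundary estimates are obtained as in \cite{Do1992,Zh2005}, and parabolic Schauder estimates bootstrap everything to uniform $C^k$ bounds on $H(t)$ for all $k$ and all $t<T_{\max}$.

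These uniform bounds force $T_{\max}=\infty$. For convergence, I would use that $(\ref{modulet})$ and the maximum principle make $t\mapsto\sup_N|\sqrt{-1}\Lambda F_{H(t)}^\perp+\epsilon\log h(t)|$ non-increasing; combining this with the higher-order estimates, and, when $\epsilon>0$, with the coercive term $-4\epsilon|\log h|^2$ in $(\ref{loghlaplace})$, one gets that $H(t)$ converges smoothly as $t\to\infty$ to a metric $H_\epsilon$ with $H_\epsilon|_{\pa N}=K|_{\pa N}$ and $\sqrt{-1}\Lambda F_{H_\epsilon}^\perp+\epsilon\log(K^{-1}H_\epsilon)=0$; for $\epsilon=0$ this recovers Donaldson's solution of the Dirichlet problem for the $\HE$ equation. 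For uniqueness, given two solutions $H_1,H_2$ I would put $\tilde h=H_1^{-1}H_2$ and $\sigma=\tr(\tilde h+\tilde h^{-1})-2\rank(E)\ge0$, noting $\sigma=0$ on $\pa N$, and run the computation behind $(\ref{distancet})$ in the elliptic setting: since $\sqrt{-1}\Lambda F_{H_i}^\perp=-\epsilon\log(K^{-1}H_i)$ and $H\mapsto\log(K^{-1}H)$ is monotone, the cross term has a favorable sign and $\sqrt{-1}\Lambda\pa\ol\pa\sigma\ge0$; the maximum principle then gives $\sigma\equiv0$, i.e. $\tilde h=\id_E$ and $H_1=H_2$.

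I expect the main obstacle to be the boundary regularity, namely establishing $C^k$ a priori estimates that are uniform up to $\pa N$ and checking the corner compatibility needed for smooth short-time existence; this is the genuinely technical input and is handled following \cite{Do1992}. Once it is in place, long-time existence and convergence of the flow are comparatively routine consequences of the inequalities recorded in Lemma \ref{estimatelemma}.
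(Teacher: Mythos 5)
Your proposal follows essentially the same route as the paper, whose proof of this proposition is a one-line appeal to Lemmas \ref{tracelemma} and \ref{estimatelemma} together with the Dirichlet heat-flow method of \cite{Do1992,Zh2005} and \cite[Theorem 5.1]{ZZZ2021}; your sketch is a correct expansion of exactly that argument, including the derivation of the bound $|\log h|\leq\epsilon^{-1}\sup_N|\Lambda F_K^\perp|$ from an interior maximum of $|\log h|^2$ in $(\ref{loghlaplace})$ and the uniqueness via the subharmonicity of $\sigma(H_1,H_2)$. The only imprecision is that for $\epsilon=0$ the inequality $(\ref{loghlaplace})$ alone does not yield a $C^0$ bound uniform in $t$ (its right-hand side is not nonpositive), and there one must instead use Donaldson's comparison of $|\Lambda F_{H}^\perp|$ and of $\sigma(H(t),H(t+s))$ with the Dirichlet heat semigroup, as in the cited references.
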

\begin{proof}[\textup{\textbf{Proof of Theorem \ref{thm2}}}]
We may take a family of compact submanifolds $\{M_j\}_{j\in\mathbb{N}}$ with boundaries such that
\begin{equation}\begin{split}
		M_j\xrightarrow{j\rightarrow\infty}M\setminus\Sigma,\ M_j\subset M_{j^{'}},\ \forall j<j^{'}.
\end{split}\end{equation}
By Proposition \ref{Dirichletsolution}, let $H_{\epsilon,j}$ be the unique Hermitian metric on $E|_{M_j}$ such that
\begin{equation}\begin{split}
		\sqrt{-1}\Lambda F_{H_{\epsilon,j}}^\perp+\epsilon\log(K^{-1}H_{\epsilon,j})=0,\
		H_{\epsilon,j}|_{\pa M_j}=K|_{\pa M_j}.
\end{split}\end{equation}
By setting $h_{\epsilon,j}=K^{-1}H_{\epsilon,j}$ and making use of $(\ref{keyequality})$, we obtain
\begin{equation}\begin{split}
		\int_{M_j}\sqrt{-1}\Lambda\pa\ol\pa|\log h_{\epsilon,j}|^2\dvol_g
		&=2\int_{M_j}<\sqrt{-1}\Lambda(F_{K}-F_{H_{\epsilon,j}}),\log h_{\epsilon,j}>\dvol_g
		\\&+2\int_{M_j}<\Theta[\log h_{\epsilon,j}](\ol\pa_E\log h_{\epsilon,j}),\ol\pa_E\log h_{\epsilon,j}>\dvol_g.
\end{split}\end{equation}
Note that
\begin{equation}\begin{split}
		<\sqrt{-1}\Lambda(F_{K}-F_{H_{\epsilon,j}}),\log h_{\epsilon,j}>
		&=<\sqrt{-1}\Lambda(F_{K}^\perp-F_{H_{\epsilon,j}}^\perp),\log h_{\epsilon,j}>
		\\&=<\sqrt{-1}\Lambda F_{K}^\perp,\log h_{\epsilon,j}>
		+\epsilon|\log h_{\epsilon,j}|^2,
\end{split}\end{equation}
the boundary condition and $\sqrt{-1}\pa\ol\pa\omega_g^{\dim_{\mathbb{C}}M-1}\geq0$ imply
\begin{equation}\begin{split}
		\int_{M_j}\sqrt{-1}\Lambda\pa\ol\pa|\log h_{\epsilon,j}|^2\dvol_g
		&=\int_{M_j}\sqrt{-1}\pa\ol\pa|\log h_{\epsilon,j}|^2\wedge\frac{\omega_g^{\dim_{\mathbb{C}}M-1}}{(\dim_{\mathbb{C}}M-1)!}
		\\&=\int_{M_j}\sqrt{-1}\pa(\ol\pa|\log h_{\epsilon,j}|^2\wedge\frac{\omega_g^{\dim_{\mathbb{C}}M-1}}{(\dim_{\mathbb{C}}M-1)!})
		\\&+\int_{M_j}\sqrt{-1}\ol\pa(|\log h_{\epsilon,j}|^2\wedge\frac{\pa\omega_g^{\dim_{\mathbb{C}}M-1}}{(\dim_{\mathbb{C}}M-1)!})
		\\&-\int_{M_j}\sqrt{-1}|\log h_{\epsilon,j}|^2\wedge\frac{\ol\pa\pa\omega_g^{\dim_{\mathbb{C}}M-1}}{(\dim_{\mathbb{C}}M-1)!}
		\\&\leq0.
\end{split}\end{equation}
It follows from above that
\begin{equation}\begin{split}\label{identity}
		&\int_{M_j}<\Theta[\log h_{\epsilon,j}](\ol\pa_E\log h_{\epsilon,j}),\ol\pa_E\log h_{\epsilon,j}>\dvol_g
		\\&\leq-\epsilon\int_{M_j}|\log h_{\epsilon,j}|^2\dvol_g
		-\int_{M_j}<\sqrt{-1}\Lambda F_{K}^\perp,\log h_{\epsilon,j}>\dvol_g.
\end{split}\end{equation}
\par Below we always use $C_1, C_2, C_3,...$ to denote constants independent of $j$. For $\epsilon>0$, Proposition \ref{Dirichletsolution} implies
\begin{equation}\begin{split}\label{c1a}
		|\log h_{\epsilon,j}|\leq\epsilon^{-1}\sup\limits_{M\setminus\Sigma}|\Lambda F_{K}^\perp|.
\end{split}\end{equation}
Moreover, a direct computation gives us that
\begin{equation}\begin{split}\label{c1b}
		\sqrt{-1}\Lambda\pa\ol\pa\tr h_{\epsilon,j}
		&=<\sqrt{-1}\Lambda(F_K-F_{H_{\epsilon,j}}),h_{\epsilon,j}>+|h_{\epsilon,j}^{-\frac{1}{2}}\pa_Kh_{\epsilon,j}|^2,
\end{split}\end{equation}
and also Lemma \ref{DD} implies
\begin{equation}\begin{split}\label{c1c}
		&\sqrt{-1}\Lambda\pa\ol\pa|h^{-1}_{\epsilon,j}\pa_{K}h_{\epsilon,j}|^2_{H_{\epsilon,j}}
		\\&\geq-C_1(|\nabla F_{K}|_{H_{\epsilon,j}}+|T_{\nabla_g}||F_{K}|_{H_{\epsilon,j}}+|\nabla\Lambda F_{H_{\epsilon,j}}|_{H_{\epsilon,j}})|h^{-1}_{\epsilon,j}\pa_{K}h_{\epsilon,j}|_{H_{\epsilon,j}}
		\\&-C_1(|T_{\nabla_g}|^2+|R_{\nabla_g}|+|F_{K}|_{H_{\epsilon,j}}+|\Lambda F_{H_{\epsilon,j}}|_{H_{\epsilon,j}})|h^{-1}_{\epsilon,j}\pa_{K}h_{\epsilon,j}|^2_{H_{\epsilon,j}}
		\\&+\frac{1}{2}|\nabla^{H_{\epsilon,j}}(h^{-1}_{\epsilon,j}\pa_{K}h_{\epsilon,j})|^2_{H_{\epsilon,j}},
\end{split}\end{equation}
Due to $(\ref{c1a})$ and
\begin{equation}\begin{split}
		\sqrt{-1}\Lambda F_{H_{\epsilon,j}}=\frac{\sqrt{-1}\Lambda\tr F_{K}}{\rank(E)}+\epsilon\log h_{\epsilon,j},
\end{split}\end{equation}
we can write $(\ref{c1b})$ and $(\ref{c1c})$ as
\begin{equation}\begin{split}\label{c1d}
		\sqrt{-1}\Lambda\pa\ol\pa\tr h_{\epsilon,j}
		&\geq C_2|h_{\epsilon,j}^{-1}\pa_Kh_{\epsilon,j}|^2-C_3,
\end{split}\end{equation}
\begin{equation}\begin{split}\label{c1e}
		\sqrt{-1}\Lambda\pa\ol\pa|h^{-1}_{\epsilon,j}\pa_{K}h_{\epsilon,j}|^2_{H_{\epsilon,j}}
		&\geq\frac{1}{2}|\nabla^{H_{\epsilon,j}}(h^{-1}_{\epsilon,j}\pa_{K}h_{\epsilon,j})|^2_{H_{\epsilon,j}}-C_4|h^{-1}_{\epsilon,j}\pa_{K}h_{\epsilon,j}|^2-C_4.
\end{split}\end{equation}
For any closed subset $\Omega\subset M_{j_\ast}\subset M\setminus\Sigma$ with large $j_\ast$, we set 
\begin{equation}\begin{split}
		\Omega_1=\{x\in M_{j_\ast}, d(x,\Omega)\leq4^{-1}\dist(\Omega,\pa M_{j_\ast})\},
\end{split}\end{equation}
\begin{equation}\begin{split}
		\Omega_2=\{x\in M_{j_\ast}, \dist(x,\Omega)\leq2^{-1}d(\Omega,\pa M_{j_\ast})\}.
\end{split}\end{equation}
Let $0\leq\eta_1\leq\eta_2\leq1$ be two nonnegative cut-off functions with $\eta_1\equiv1$ on $\Omega$, $\eta_1\equiv0$ on $M_{j_\ast}\setminus\Omega_1$, $\eta_2\equiv1$ on $\Omega_1$, $\eta_2\equiv0$ on $M_{j_\ast}\setminus\Omega_2$ and $|d\eta_i|^2+|\Lambda\pa\ol\pa\eta_i|\leq\tilde{C}$ for $i=1,2$, where $\tilde{C}$ depends only on $\dist^{-2}(\Omega,\pa M_{j_\ast})$. We define the quantity
\begin{equation}\begin{split}
		f=\eta_1^2|h^{-1}_{\epsilon,j}\pa_{K}h_{\epsilon,j}|^2_{H_{\epsilon,j}}+\hat{C}\eta_2^2\tr h_{\epsilon,j},
\end{split}\end{equation}
for constant $\hat{C}$ to be determined later. By $(\ref{c1d})$ and $(\ref{c1e})$, we get
\begin{equation}\begin{split}
		\sqrt{-1}\Lambda\pa\ol\pa f
		&\geq\eta_1^2(\frac{1}{2}|\nabla^{H_{\epsilon,j}}(h^{-1}_{\epsilon,j}\pa_{K}h_{\epsilon,j})|^2_{H_{\epsilon,j}}-C_4|h^{-1}_{\epsilon,j}\pa_{K}h_{\epsilon,j}|^2-C_4)
		\\&+\hat{C}\eta_2^2(C_2|h_{\epsilon,j}^{-1}\pa_Kh_{\epsilon,j}|^2-C_3)
		\\&+|h^{-1}_{\epsilon,j}\pa_{K}h_{\epsilon,j}|^2_{H_{\epsilon,j}}\sqrt{-1}\Lambda\pa\ol\pa\eta_1^2+\hat{C}\tr h_{\epsilon,j}\sqrt{-1}\Lambda\pa\ol\pa\eta_2^2
		\\&+2<\pa\eta_1^2,\pa|h^{-1}_{\epsilon,j}\pa_{K}h_{\epsilon,j}|^2_{H_{\epsilon,j}}>+2\hat{C}<\pa\eta_2^2,\pa\tr h_{\epsilon,j}>.
\end{split}\end{equation}
By the inequality $ab\leq\chi a^2+\frac{1}{4\chi^2}b^2$ for $a,b,\chi>0$, we have
\begin{equation}\begin{split}
		&2<\pa\eta_1^2,\pa|h^{-1}_{\epsilon,j}\pa_{K}h_{\epsilon,j}|^2_{H_{\epsilon,j}}>+2\hat{C}<\pa\eta_2^2,\pa\tr h_{\epsilon,j}>
		\\&\geq-8\eta_1|\pa\eta_1||h^{-1}_{\epsilon,j}\pa_{K}h_{\epsilon,j}|_{H_{\epsilon,j}}|\nabla^{H_{\epsilon,j}}(h^{-1}_{\epsilon,j}\pa_{K}h_{\epsilon,j})|_{H_{\epsilon,j}}-4\hat{C}\eta_2|\pa\eta_2||\tr\nabla^{H_{\epsilon,j}}h_{\epsilon,j}|
		\\&\geq-\frac{1}{2}\eta_1^2|\nabla^{H_{\epsilon,j}}(h^{-1}_{\epsilon,j}\pa_{K}h_{\epsilon,j})|_{H_{\epsilon,j}}^2-32|\pa\eta_1|^2|h^{-1}_{\epsilon,j}\pa_{K}h_{\epsilon,j}|_{H_{\epsilon,j}}^2
		\\&-2\eta_2^2|\tr\nabla^{H_{\epsilon,j}}h_{\epsilon,j}|^2-2\hat{C}^2|\pa\eta_2|^2,
\end{split}\end{equation}
and it holds
\begin{equation}\begin{split}
		\sqrt{-1}\Lambda\pa\ol\pa f
		&\geq-\eta_1^2(C_4|h^{-1}_{\epsilon,j}\pa_{K}h_{\epsilon,j}|^2+C_4)
		+\hat{C}\eta_2^2(C_2|h_{\epsilon,j}^{-1}\pa_Kh_{\epsilon,j}|^2-C_3)
		\\&+2|h^{-1}_{\epsilon,j}\pa_{K}h_{\epsilon,j}|^2_{H_{\epsilon,j}}(\eta_1\sqrt{-1}\Lambda\pa\ol\pa\eta_1+|\pa\eta_1|^2)
		\\&+2\hat{C}\tr h_{\epsilon,j}(\eta_2\sqrt{-1}\Lambda\pa\ol\pa\eta_2+|\pa\eta_2|^2)
		\\&-32|\pa\eta_1|^2|h^{-1}_{\epsilon,j}\pa_{K}h_{\epsilon,j}|_{H_{\epsilon,j}}^2
		-2\eta_2^2|\tr\nabla^{H_{\epsilon,j}}h_{\epsilon,j}|^2-2\hat{C}^2|\pa\eta_2|^2
		\\&\geq-\eta_2^2(C_4|h^{-1}_{\epsilon,j}\pa_{K}h_{\epsilon,j}|^2+C_4)
		+\hat{C}\eta_2^2(C_2|h_{\epsilon,j}^{-1}\pa_Kh_{\epsilon,j}|^2-C_3)
		\\&-2\tilde{C}C_5\eta_2^2|h^{-1}_{\epsilon,j}\pa_{K}h_{\epsilon,j}|^2-2\hat{C}\tilde{C}C_6\eta_2^2
		\\&-32\tilde{C}C_5|h^{-1}_{\epsilon,j}\pa_{K}h_{\epsilon,j}|^2
		-2C_7\eta_2^2|h^{-1}_{\epsilon,j}\pa_{K}h_{\epsilon,j}|^2-2\hat{C}^2\tilde{C}
		\\&=\eta_2^2|h^{-1}_{\epsilon,j}\pa_{K}h_{\epsilon,j}|^2(\hat{C}C_2-C_4-2\tilde{C}C_5-32\tilde{C}C_5-2C_7)
		\\&-C_4-\hat{C}C_3-2\hat{C}\tilde{C}C_6-2\hat{C}^2\tilde{C}.
\end{split}\end{equation}
By taking $\hat{C}$ such that $\hat{C}C_2-C_4-2\tilde{C}C_5-32\tilde{C}C_5-2C_7=1$, we have
\begin{equation}\begin{split}
		\sqrt{-1}\Lambda\pa\ol\pa f\geq\eta_2^2|h^{-1}_{\epsilon,j}\pa_{K}h_{\epsilon,j}|^2-C_8.
\end{split}\end{equation}
We assume $x_0\in\Omega_2$ such that $f(x_0)=\max\limits_{M_{j_\ast}}f$. Obviously, it holds
\begin{equation}\begin{split}
		\max\limits_{\Omega}|h^{-1}_{\epsilon,j}\pa_{K}h_{\epsilon,j}|^2_{H_{\epsilon,j}}
		&=\max\limits_{\Omega}\eta_1^2|h^{-1}_{\epsilon,j}\pa_{K}h_{\epsilon,j}|^2_{H_{\epsilon,j}}
		\\&\leq\max\limits_{\Omega}f
		\\&\leq f(x_0).
\end{split}\end{equation}
If $x_0\in\Omega_2\setminus\Omega_1$, we have
\begin{equation}\begin{split}
\max\limits_{\Omega}|h^{-1}_{\epsilon,j}\pa_{K}h_{\epsilon,j}|^2_{H_{\epsilon,j}}
&\leq f(x_0)
\\&=\hat{C}\eta_2^2\tr h_{\epsilon,j}(x_0)
\\&\leq C_9.
	\end{split}\end{equation}
If $x_0\in\Omega_1$, we have
\begin{equation}\begin{split}
		\max\limits_{\Omega}|h^{-1}_{\epsilon,j}\pa_{K}h_{\epsilon,j}|^2_{H_{\epsilon,j}}
		&\leq f(x_0)
		\\&=\eta_1^2|h^{-1}_{\epsilon,j}\pa_{K}h_{\epsilon,j}|^2_{H_{\epsilon,j}}(x_0)+\hat{C}\eta_2^2\tr h_{\epsilon,j}(x_0)
		\\&\leq C_{10}\left(\eta_2^2|h^{-1}_{\epsilon,j}\pa_{K}h_{\epsilon,j}|^2(x_0)+1\right)
		\\&\leq C_{10}(C_8+1).
\end{split}\end{equation}
In both cases, we conclude the uniform estimate of $\{h_{\epsilon,j}\}_{j\in\mathbb{N}}$ up to arbitrary orders on $\Omega$ by standard elliptic regularity. We may take $j_\epsilon\rightarrow\infty$ so that $h_{\epsilon,j_\epsilon}\xrightarrow{j_\epsilon\rightarrow\infty}h_\epsilon$ in $C^\infty_{\loc}$-topology and
\begin{equation}\begin{split}\label{epsilonequation}
		\sqrt{-1}\Lambda F_{H_\epsilon}^\perp+\epsilon\log h_{\epsilon}=0,\  \epsilon>0.
\end{split}\end{equation}
\par Now we compute
\begin{equation}\begin{split}
		\sqrt{-1}\Lambda\pa\ol\pa\log(\frac{\tr h_{\epsilon,j_\epsilon}}{\rank(E)})
		&=\frac{<\sqrt{-1}\Lambda(F_K-F_{H_{\epsilon,j_\epsilon}}),h_{\epsilon,j_\epsilon}>}{\tr h_{\epsilon,j_\epsilon}}
		\\&-\frac{|\tr\pa_Kh_{\epsilon,j_\epsilon}|^2}{(\tr h_{\epsilon,j_\epsilon})^2}+\frac{|h_{\epsilon,j_\epsilon}^{-\frac{1}{2}}\circ\pa_Kh_{\epsilon,j_\epsilon}|^2}{\tr h_{\epsilon,j_\epsilon}}
		\\&\geq\frac{<\sqrt{-1}\Lambda(F_K^\perp-F_{H_{\epsilon,j_\epsilon}}^\perp),h_{\epsilon,j_\epsilon}>}{\tr h_{\epsilon,j_\epsilon}}
		\\&=\frac{<\sqrt{-1}\Lambda F_K^\perp+\epsilon\log h_{\epsilon,j_\epsilon},h_{\epsilon,j_\epsilon}>}{\tr h_{\epsilon,j_\epsilon}}
		\\&\geq\frac{<\sqrt{-1}\Lambda F_K^\perp,h_{\epsilon,j_\epsilon}>}{\tr h_{\epsilon,j_\epsilon}}
		\\&\geq-|\Lambda F_K^\perp|,
\end{split}\end{equation}
where we have used $\frac{|\tr\pa_Kh_{\epsilon,j_\epsilon}|^2}{\tr h_{\epsilon,j_\epsilon}}\leq|h_{\epsilon,j_\epsilon}^{-\frac{1}{2}}\circ\pa_Kh_{\epsilon,j_\epsilon}|^2$, and $\tr(\log h_{\epsilon,j}\circ h_{\epsilon,j})\geq\tr\log h_{\epsilon,j}=0$ due to $\det K=\det H_{\epsilon,j}$. For any $p>0$, it holds
\begin{equation}\begin{split}
		&-\sup\limits_{M\setminus\Sigma}|\Lambda F_K^\perp|\int_{M_j}(\log(\frac{\tr h_{\epsilon,j_\epsilon}}{\rank(E)}))^p\dvol_g
		\\&\leq\int_{M_j}
		\sqrt{-1}\Lambda\pa\ol\pa\log(\frac{\tr h_{\epsilon,j_\epsilon}}{\rank(E)})(\log(\frac{\tr h_{\epsilon,j_\epsilon}}{\rank(E)}))^p\dvol_g
		\\&=\int_{M_j}
		\sqrt{-1}\pa\ol\pa\log(\frac{\tr h_{\epsilon,j_\epsilon}}{\rank(E)})(\log(\frac{\tr h_{\epsilon,j_\epsilon}}{\rank(E)}))^p\wedge\frac{\omega_g^{\dim_{\mathbb{C}}M-1}}{(\dim_{\mathbb{C}}M-1)!}
		\\&=\int_{M_j}
		\sqrt{-1}\ol\pa\log(\frac{\tr h_{\epsilon,j_\epsilon}}{\rank(E)})\wedge\pa(\log(\frac{\tr h_{\epsilon,j_\epsilon}}{\rank(E)}))^p\wedge\frac{\omega_g^{\dim_{\mathbb{C}}M-1}}{(\dim_{\mathbb{C}}M-1)!}
		\\&+\int_{M_j}
		\sqrt{-1}\ol\pa\log(\frac{\tr h_{\epsilon,j_\epsilon}}{\rank(E)})\wedge(\log(\frac{\tr h_{\epsilon,j_\epsilon}}{\rank(E)}))^p\wedge\frac{\pa\omega_g^{\dim_{\mathbb{C}}M-1}}{(\dim_{\mathbb{C}}M-1)!}
		\\&=-p\int_{M_j}(\log(\frac{\tr h_{\epsilon,j_\epsilon}}{\rank(E)}))^{p-1}|\pa\log(\frac{\tr h_{\epsilon,j_\epsilon}}{\rank(E)})|^2\dvol_g,
\end{split}\end{equation}
where we haved used $\log(\frac{\tr h_{\epsilon,j_\epsilon}}{\rank(E)})\geq\log(\frac{\tr h_{\epsilon,j_\epsilon}}{\rank(E)})|_{\pa M_j}=0$ and $\sqrt{-1}\pa\ol\pa\omega_g^{\dim_{\mathbb{C}}M-1}\geq0$. Then by extending $\log(\frac{\tr h_{\epsilon,j_\epsilon}}{\rank(E)})$ to a Lipschitz function on $M$ and using Moser iteration based on
\begin{equation}\begin{split}
	\int_{M_j}|\pa(\log(\frac{\tr h_{\epsilon,j_\epsilon}}{\rank(E)}))^{\frac{p+1}{2}}|^2\dvol_g
	&\leq\frac{(p+1)^2}{4p}\sup\limits_{M\setminus\Sigma}|\Lambda F_K^\perp|\int_{M_j}(\log(\frac{\tr h_{\epsilon,j_\epsilon}}{\rank(E)}))^p\dvol_g,
\end{split}\end{equation}
 we conclude
\begin{equation}\begin{split}
		\log(\frac{\tr h_{\epsilon,j_\epsilon}}{\rank(E)})
		\leq C_{11}(1+\int_{M_{j_\epsilon}}\log(\frac{\tr h_{\epsilon,j_\epsilon}}{\rank(E)})\dvol_{g}),
\end{split}\end{equation}
where we have used the fact that $g$ is mutually bounded with a Hermitian metric on $M$. It is also easy to see $\log(\frac{\tr h_{\epsilon,j_\epsilon}}{\rank(E)})\leq|\log h_{\epsilon,j_\epsilon}|\leq\rank(E)^{\frac{3}{2}}\log\tr h_{\epsilon,j_\epsilon}$ and hence it follows
\begin{equation}\begin{split}
		|\log h_{\epsilon,j_\epsilon}|
		&\leq\rank(E)^{\frac{3}{2}}\left(\log(\frac{\tr h_{\epsilon,j_\epsilon}}{\rank(E)})
		+\log\rank(E)\right)
		\\&\leq\rank(E)^{\frac{3}{2}}\left(C_{11}+C_{11}\int_{M_{j_\epsilon}}\log(\frac{\tr h_{\epsilon,j_\epsilon}}{\rank(E)})\dvol_{g}+\log\rank(E)
		\right)
		\\&\leq\rank(E)^{\frac{3}{2}}\left(C_{11}+C_{11}\int_{M_{j_\epsilon}}|\log h_{\epsilon,j_\epsilon}|\dvol_{g}+\log\rank(E)
		\right)
		\\&=C_{12}(1+\int_{M_{j_\epsilon}}|\log h_{\epsilon,j_\epsilon}|\dvol_{g}).
\end{split}\end{equation}
By Lebesgue theorem, we then arrive at 
\begin{equation}\begin{split}
		|\log h_\epsilon|
		&=\lim\limits_{j_\epsilon\rightarrow\infty}|\log h_{\epsilon,j_\epsilon}|
		\\&\leq\lim\limits_{j_\epsilon\rightarrow\infty}C_{12}(1+\int_{M_{j_\epsilon}}|\log h_{\epsilon,j_\epsilon}|\dvol_g)
		\\&=C_{12}(1+\int_{M\setminus\Sigma}|\log h_{\epsilon}|\dvol_g).
\end{split}\end{equation}
\par In the remaining of the proof, we may assume $\int_{M\setminus\Sigma}|\log h_{\epsilon}|\dvol_g>1$ and thus
\begin{equation}\begin{split}\label{mean}
		|\log h_\epsilon|
		\leq A\int_{M\setminus\Sigma}|\log h_{\epsilon}|\dvol_g,
\end{split}\end{equation}
for a universal constant $A$. To prove the existence of an $\AHE$ structure, it suffices to prove \begin{equation}\begin{split}
		\sup\limits_{M\setminus\Sigma}|\Lambda F_{H_{\epsilon}}^\perp|=\sup\limits_{M\setminus\Sigma}|\epsilon\log h_{\epsilon}|\xrightarrow{\epsilon\rightarrow0}0.
\end{split}\end{equation}
We may assume there exists a sequence $\epsilon\rightarrow0$ and a constant $B>0$ such that
\begin{equation}\begin{split}
		||\log h_\epsilon||_{L^1(M\setminus \Sigma)}\xrightarrow{\epsilon\rightarrow0}\infty,
\end{split}\end{equation}
\begin{equation}\begin{split}
		|\Lambda F_{H_{\epsilon}}^\perp|=|\epsilon\log h_{\epsilon}|\geq B.
\end{split}\end{equation}
Set $\widetilde{\log h_{\epsilon}}=||\log h_\epsilon||_{L^1(M\setminus \Sigma)}^{-1}\log h_{\epsilon}$ and it follows from $(\ref{mean})$ that
\begin{equation}\begin{split}
		|\widetilde{\log h_{\epsilon}}|\leq A,\
		||\widetilde{\log h_{\epsilon}}||_{L^1(M\setminus\Sigma)}=1.
\end{split}\end{equation}
Using $(\ref{identity})$, we have
\begin{equation}\begin{split}
		&\epsilon||\log h_{\epsilon,j_\epsilon}||_{L^1(M_{j_\epsilon})}^{-1}\int_{M_{j_\epsilon}}|\log h_{\epsilon,j_\epsilon}|^2\dvol_g
		\\&+\int_{M_{j_\epsilon}}||\log h_{\epsilon,j_\epsilon}||_{L^1(M_{j_\epsilon})}<\Theta[\log h_{\epsilon,j_\epsilon}](\ol\pa_E\widetilde{\log h_{\epsilon,j_\epsilon}}),\ol\pa_E\widetilde{\log h_{\epsilon,j_\epsilon}}>\dvol_g
		\\&=-\int_{M_{j_\epsilon}}<\sqrt{-1}\Lambda F_{K}^\perp,\widetilde{\log h_{\epsilon,j_\epsilon}}>\dvol_g,
\end{split}\end{equation}
where $\widetilde{\log h_{\epsilon,j_\epsilon}}=||\log h_{\epsilon,j_\epsilon}||_{L^1(M_{j_\epsilon})}^{-1}\log h_{\epsilon,j_\epsilon}$. Given any $\delta>0$, there exists large $J_\delta$ such that if $j_\epsilon^{'}\geq j_\epsilon\geq J_\delta$, it holds
\begin{equation}\begin{split}
		\vol(M_{j_\epsilon^{'}}\setminus M_{j_\epsilon})\leq\delta.
\end{split}\end{equation}
We may assume $|\widetilde{\log h_{\epsilon,j_\epsilon}}|\leq A$ and therefore we deduce
\begin{equation}\begin{split}
		&\epsilon||\log h_{\epsilon,j_\epsilon^{'}}||_{L^1(M_{j_\epsilon^{'}})}^{-1}\int_{M_{j_\epsilon}}|\log h_{\epsilon,j_\epsilon^{'}}|^2\dvol_g
		\\&+\int_{M_{j_\epsilon}}||\log h_{\epsilon,j_\epsilon^{'}}||_{L^1(M_{j_\epsilon^{'}})}<\Theta[\log h_{\epsilon,j_\epsilon^{'}}](\ol\pa_E\widetilde{\log h_{\epsilon,j_\epsilon^{'}}}),\ol\pa_E\widetilde{\log h_{\epsilon,j_\epsilon^{'}}}>\dvol_g
		\\&\leq-(\int_{M_{j_\epsilon^{'}}\setminus M_{j_\epsilon}}
		+\int_{M_{j_\epsilon}})<\sqrt{-1}\Lambda F_{K}^\perp,\widetilde{\log h_{\epsilon,j_\epsilon^{'}}}>\dvol_g
		\\&\leq\vol(M_{j_\epsilon^{'}}\setminus M_{j_\epsilon})
		\sup\limits_{M_{j_{\epsilon}}}|\widetilde{\log h_{\epsilon,j_\epsilon^{'}}}|
		\sup\limits_{M\setminus\Sigma}|\Lambda F_K^\perp|
		-\int_{M_{j_\epsilon}}<\sqrt{-1}\Lambda F_{K}^\perp,\widetilde{\log h_{\epsilon,j_\epsilon^{'}}}>\dvol_g
		\\&\leq\delta A\sup\limits_{M\setminus\Sigma}|\Lambda F_{K}^\perp|
		-\int_{M_{j_\epsilon}}<\sqrt{-1}\Lambda F_{K}^\perp,\widetilde{\log h_{\epsilon,j_\epsilon^{'}}}>\dvol_g.
\end{split}\end{equation}
Taking $j_\epsilon^{'}\rightarrow\infty$ firstly and $j_\epsilon\rightarrow\infty$ secondly, we can arrive at
\begin{equation}\begin{split}
		&\epsilon||\log h_{\epsilon}||_{L^1(M\setminus\Sigma)}^{-1}\int_{M\setminus\Sigma}|\log h_{\epsilon}|^2\dvol_g
		\\&+\int_{M\setminus\Sigma}||\log h_{\epsilon}||_{L^1(M\setminus\Sigma)}<\Theta[\log h_{\epsilon}](\ol\pa_E\widetilde{\log h_{\epsilon}}),\ol\pa_E\widetilde{\log h_{\epsilon}}>\dvol_g
		\\&\leq\delta A\sup\limits_{M\setminus\Sigma}|\Lambda F_{K}^\perp|
		-\int_{M\setminus\Sigma}<\sqrt{-1}\Lambda F_{K}^\perp,\widetilde{\log h_{\epsilon}}>\dvol_g.
\end{split}\end{equation}
Note that
\begin{equation}\begin{split}
		&\epsilon||\log h_{\epsilon}||_{L^1(M\setminus\Sigma)}^{-1}\int_{M\setminus\Sigma}|\log h_{\epsilon}|^2\dvol_g
		\\&\geq B|\log h_{\epsilon}|^{-1}||\log h_{\epsilon}||_{L^1(M\setminus\Sigma)}^{-1}\int_{M\setminus\Sigma}|\log h_{\epsilon}|^2\dvol_g
		\\&\geq BA^{-1}||\log h_{\epsilon}||_{L^1(M\setminus\Sigma)}^{-2}\int_{M\setminus\Sigma}|\log h_{\epsilon}|^2\dvol_g
		\\&\geq B(A\vol(M\setminus\Sigma))^{-1},
\end{split}\end{equation}
this and the fact $\delta$ is arbitrary furthermore give us
\begin{equation}\begin{split}
		&B(A\vol(M\setminus\Sigma))^{-1}+\int_{M\setminus\Sigma}||\log h_{\epsilon}||_{L^1(M\setminus\Sigma)}<\Theta[\log h_{\epsilon}](\ol\pa_E\widetilde{\log h_{\epsilon}}),\ol\pa_E\widetilde{\log h_{\epsilon}}>\dvol_g
		\\&\leq-\int_{M\setminus\Sigma}<\sqrt{-1}\Lambda F_{K}^\perp,\widetilde{\log h_{\epsilon}}>\dvol_g.
\end{split}\end{equation}
\par Notice that $l\Theta(lx,ly)\rightarrow(x-y)^{-1}$ for $x>y$ and $l\Theta(lx,ly)\rightarrow+\infty$ for $x\leq y$ increases monotonically as $l\rightarrow+\infty$. It holds
\begin{equation}\begin{split}
		&B(A\vol(M\setminus\Sigma))^{-1}+\int_{M\setminus\Sigma}(\rho[\widetilde{\log h_{\epsilon}}](\ol\pa_E\widetilde{\log h_{\epsilon}}),\ol\pa_E\widetilde{\log h_{\epsilon}})\dvol_g
		\\&\leq-\int_{M\setminus\Sigma}<\sqrt{-1}\Lambda F_{K}^\perp,\widetilde{\log h_{\epsilon}}>\dvol_g,
\end{split}\end{equation}
for $\rho:\mathbb{R}\times\mathbb{R}\rightarrow\mathbb{R}$ with $\rho(x,y)<\frac{1}{x-y}$ whenever $x>y$ and $\epsilon$ small enough. Since $\{|\widetilde{\log h_\epsilon}|\}_{\epsilon>0}$ are uniformly bounded, by taking small $\rho$ it follows that $\{\widetilde{\log h_\epsilon}\}_{\epsilon>0}$ are also uniformly bounded in $L_1^2$. We may assume $\widetilde{\log h_\epsilon}\xrightarrow{\epsilon\rightarrow0} u_\infty$ weakly in $L_{1,\loc}^2$-topology and for large $j$,
\begin{equation}\begin{split}
		\vol(M\setminus(\Sigma\cup M_{j}))\leq(2A)^{-1}.
\end{split}\end{equation}
Therefore we have
\begin{equation}\begin{split}
		\int_{M_{j}}|u_\infty|\dvol_g
		&=\lim\limits_{\epsilon\rightarrow0}\int_{M_{j}}|\widetilde{\log h_\epsilon}|\dvol_g
		\\&=\lim\limits_{\epsilon\rightarrow0}(\int_{M\setminus\Sigma}
		-\int_{M\setminus(\Sigma\cup M_{j})})|\widetilde{\log h_\epsilon}|\dvol_g
		\\&\geq 1-A(2A)^{-1}
		\\&=2^{-1},
\end{split}\end{equation}
which indicates $u_\infty\neq0$. Meanwhile, we also have
\begin{equation}\begin{split}\label{keysemistable}
		&B(A\vol(M\setminus\Sigma))^{-1}+\int_{M\setminus\Sigma}(\rho[u_\infty](\ol\pa_E u_\infty,\ol\pa_E u_\infty)\dvol_g
		\\&\leq-\int_{M\setminus\Sigma}<\sqrt{-1}\Lambda F_{K}^\perp,u_\infty>\dvol_g.
\end{split}\end{equation}
\par Based on $(\ref{keysemistable})$ and \cite[Lemma 5.5]{Si1988}, we know that $u_\infty$ has constant eigenvalues almost everywhere. Let $\lambda_1<...<\lambda_k$ denote the distinct eigenvalues of $u_\infty$ with $2\leq k\leq\rank(E)$. Define a family of smooth nonnegative functions $p_i$ on $\mathbb{R}$ such that $p_i(x)=1$ if $x\leq\lambda_i$ and $p_i(x)=0$ if $x>\lambda_{i+1}$. Set $\pi_i=p_i[u_\infty]$ which means $\pi_i$ has eigenvalues $p_i(\lambda_1),...,p_i(\lambda_{k})$. Using $(\ref{keysemistable})$ again, the regularity result of Uhlenbeck-Yau \cite{UY1986} and following \cite[pages 887-889]{Si1988}, one further proves that each $\pi_i$ represent a nontrivial saturated sub-sheaf $\mathcal{S}_i$ and the negativity of
\begin{equation}\begin{split}\label{finish}
		W
		&\triangleq\lambda_k\deg(E,K)-\sum\limits_{i=1}^{k-1}(\lambda_{i+1}-\lambda_{i})\deg(\mathcal{S}_i,K)
		\\&=\sum\limits_{i=1}^{k-1}(\lambda_{i+1}-\lambda_{i})\rank(\mathcal{S}_i)(\frac{\deg(E,K)}{\rank(E)}-\frac{\deg(\mathcal{S}_i,K)}{\rank(\mathcal{S}_i)}),
\end{split}\end{equation}
which we shall not repeat since there is no essential change. Hence there must exist at least one $\mathcal{S}_i$ violating the semi-stability assumption. In a conclusion, $\{H_\epsilon\}$ is an $\AHE$ structure.
\par Finally for any $\epsilon>0$, we have $\det K=\det H_\epsilon$ and it holds for $j_\epsilon^{'}\geq j_\epsilon$ that
\begin{equation}\begin{split}
		\int_{M_{j_\epsilon}}|\ol\pa_E\log h_{\epsilon,j_\epsilon^{'}}|^2\dvol_g
		&\leq C_\epsilon\int_{M_{j_\epsilon^{'}}}\Theta[\log h_{\epsilon,j_\epsilon^{'}}](\ol\pa_E\log h_{\epsilon,j_\epsilon^{'}}),\ol\pa_E\log h_{\epsilon,j^{'}_\epsilon}>\dvol_g
		\\&\leq-C_\epsilon\int_{M_{j_\epsilon^{'}}}<\sqrt{-1}\Lambda F_{K}^\perp,\log h_{\epsilon,j_\epsilon^{'}}>\dvol_g
		\\&\leq C_\epsilon\sup\limits_{M\setminus\Sigma}|\Lambda F_{K}^\perp|\sup\limits_{M\setminus\Sigma}|\log h_{\epsilon,j_\epsilon^{'}}|\vol(M\setminus\Sigma)
		\\&\leq\epsilon^{-1}C_\epsilon\sup\limits_{M\setminus\Sigma}|\Lambda F_{K}^\perp|^2\vol(M\setminus\Sigma),
\end{split}\end{equation}
where $C_\epsilon$ depends on $\epsilon$. Taking $j^{'}_{\epsilon}\rightarrow\infty$ and $j_{\epsilon}\rightarrow\infty$, we have
\begin{equation}\begin{split}
		\int_{M}|\ol\pa_E\log h_{\epsilon}|^2\dvol_g
		\leq \epsilon^{-1}C_\epsilon\sup\limits_{M\setminus\Sigma}|\Lambda F_{K}^\perp|^2\vol(M\setminus\Sigma).
\end{split}\end{equation}
Hence the proof of Theorem \ref{thm2} is completed.
	\end{proof}
\begin{remark}\label{stableremark}
To obtain the existence of a $\HE$ metric, by $(\ref{keysemistable})$ we have
\begin{equation}\begin{split}
		\int_{M\setminus\Sigma}(\rho[u_\infty](\ol\pa_E u_\infty,\ol\pa_E u_\infty)\dvol_g
		&\leq-\int_{M\setminus\Sigma}<\sqrt{-1}\Lambda F_{K}^\perp,u_\infty>\dvol_g.
\end{split}\end{equation}
Nore this inequality follows from the assumption
\begin{equation}\begin{split}
		||\log h_\epsilon||_{L^1(M\setminus \Sigma)}\xrightarrow{\epsilon\rightarrow0}\infty,
\end{split}\end{equation}
and it implies the nonpositivity of $W$. As a consequence, there is at least one $\mathcal{S}_i$ violating the stability assumption. Hence there must exist a $\HE$ metric compatible with $K$.
\end{remark}
\section{Uniqueness and stabilities of Hermitian-Einstein bundles}
We recall following key assumptions of Simpson. 
\begin{proposition}[Proposition 2.2 in \cite{Si1988}]\label{assumptions}
	Suppose $(M,J,g)$ is a compact K\"{a}hler manifold and $\Sigma\subset M$ is a Zariski closed subset, then $M\setminus\Sigma$ satisfies:
	\begin{enumerate}
		\item It has finite volume.
		\item It admits a nonnegative exhaustion function $\phi$ such that $\Delta_g\phi\in L^\infty(M\setminus\Sigma)$.
		\item If $0\leq f\in C^\infty(M\setminus\Sigma)\cap L^\infty(M\setminus\Sigma)$ satisfies $\Delta_g f\geq-A$ for a constant $A$, then
		\begin{equation}\begin{split}
				\sup\limits_{M\setminus\Sigma}f\leq C_A(1+\int_{M\setminus\Sigma}f\dvol_{g}),
		\end{split}\end{equation}
		where $C_A$ is a constant depending on $A$. Moreoover, if $\Delta_g f\geq0$, then $f$ is a constant.
	\end{enumerate}
\end{proposition}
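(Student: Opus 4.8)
The first assertion is immediate: a proper Zariski closed (equivalently, complex-analytic) subset of the connected compact manifold $M$ is nowhere dense and Lebesgue-null, and $g$ is a smooth K\"{a}hler metric on all of $M$, so $\vol_g(M\setminus\Sigma)=\vol_g(M)<\infty$. The remaining task is to construct the exhaustion function of (2) and then to deduce (3) from (1), (2) and classical facts on the compact manifold $M$.

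For (2) the plan is to build $\phi$ out of local holomorphic data attached to $\Sigma$ and to check the Laplacian bound by a direct computation. When $\Sigma$ is a simple normal crossings divisor this is clean: writing $\Sigma=\bigcup_i\Sigma_i$ with the $\Sigma_i$ smooth and meeting transversally, choose defining sections $s_i$ of $\mathcal{O}(\Sigma_i)$ together with smooth Hermitian metrics $h_i$, and set $\phi:=-\sum_i\log|s_i|^2_{h_i}+C$ with $C$ chosen so that $\phi\geq0$; then $\phi$ is smooth on $M\setminus\Sigma$, tends to $+\infty$ along $\Sigma$ (hence is an exhaustion), and, crucially, $\sqrt{-1}\pa\ol\pa\phi=\sum_i\Theta_{h_i}$ is a \emph{smooth} $(1,1)$-form on all of $M$, because $-\log|z_i|^2$ is pluriharmonic off $\{z_i=0\}$; since $g$ is a genuine smooth metric on the compact $M$, contracting this smooth form gives $\Delta_g\phi\in L^\infty$. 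The general case is then handled by taking an embedded resolution of singularities $\pi:\widehat M\to M$ with $\pi^{-1}(\Sigma)$ normal crossings and $\pi$ an isomorphism over $M\setminus\Sigma$, running the above construction on $\widehat M$, transporting $\phi$ back to $M\setminus\Sigma\cong\widehat M\setminus\pi^{-1}(\Sigma)$, and correcting it by a bounded function so as to absorb the part of the curvature that ceases to be bounded once the degenerate form $\pi^{*}g$ is compared with the smooth metric upstairs. I expect this last point — the uniform bound on $\Delta_g\phi$ near the singular locus of $\Sigma$ — to be the main technical obstacle.

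For (3) the plan is to reduce both statements to their compact analogues on $M$, using $\phi$ to localize away from $\Sigma$. Fix a Green's function $G(\cdot,y)$ for $\Delta_g$ on $M$ with $\int_M G(\cdot,y)\dvol_g=0$, so that $\Delta_xG(x,y)=\vol_g(M)^{-1}-\delta_y$ and $G\geq-C_0$. The heart of the matter is the Green representation $f(y)=\vol_g(M)^{-1}\int_{M\setminus\Sigma}f\,\dvol_g-\int_{M\setminus\Sigma}G(x,y)\Delta_g f(x)\,\dvol_g(x)$ together with $\int_{M\setminus\Sigma}\Delta_g f\,\dvol_g=0$: granting these, $\Delta_g f\geq-A$, $G+C_0\geq0$ and $\int_M G=0$ give $f(y)\leq\vol_g(M)^{-1}\int f+AC_0\vol_g(M)\leq C_A(1+\int f)$, while if $\Delta_g f\geq0$ then $\Delta_g f\equiv0$, and applying the same reasoning to the bounded subharmonic function $f^2$, whose Laplacian is $2|\nabla f|^2$, forces $\nabla f\equiv0$. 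To obtain the representation one tests $\Delta_g f$ and $G(\cdot,y)\Delta_g f$ against cutoffs $\chi(\delta\phi)$ built from the exhaustion of (2), integrates by parts, and lets $\delta\to0$; the error terms produced carry factors $\chi'(\delta\phi)\Delta_g\phi$ and $\chi''(\delta\phi)|\nabla\phi|^2$ supported on shrinking neighborhoods of $\Sigma$, and are killed using $\Delta_g\phi\in L^\infty$ from (2), $\vol_g(\{\phi\text{ large}\})\to0$ from (1), and — for the gradient term — spreading the transition of $\chi$ over a long range of $\phi$-values. (Equivalently, one may invoke the classical removability of analytic sets of real codimension $\geq2$ for bounded functions with a one-sided bound on the Laplacian.) I expect this removability/cutoff step to be the delicate part of (3).
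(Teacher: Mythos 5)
The paper itself contains no proof of this statement: it is imported verbatim from Simpson's paper (Proposition 2.2 of \cite{Si1988}) and used as a black box, so the only meaningful comparison is with Simpson's own argument. Within your proposal, part (1) is immediate and correct, and the outline of part (3) is sound: deducing (3) from (1) and (2) via a Green-function (or, as Simpson actually does in his Proposition 2.1, heat-kernel) representation, with the integration by parts across $\Sigma$ justified by cutoffs $\chi(\delta\phi)$, is the right structure. The error term $\chi''(\delta\phi)\,\delta^2|\nabla\phi|^2$ is killed exactly as you indicate, by spreading the transition of $\chi$ over a long range of $\phi$ and using that $\int_{\{\phi\le T\}}|\nabla\phi|^2=O(T)$, which follows from the boundedness of $\Delta_g\phi$; this is precisely the mechanism of Simpson's Lemma 5.2.

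The genuine gap is in part (2), at the point you yourself flag as the main obstacle, and it is not a removable technicality: your construction fails whenever $\Sigma$ has a component of complex codimension at least two, and no \emph{bounded} correction can repair it. Already for $\Sigma=\{p\}$ in a surface one has, in normal coordinates, $\Delta_g\bigl(-\log|z|^2\bigr)=-4(n-1)|z|^{-2}+O(1)$, which is unbounded for $n\ge 2$; this is exactly the local behaviour of the function you push down from the resolution. Invariantly, on $\widehat M$ the form $\sqrt{-1}\pa\ol\pa\phi=-\Theta_h(\mathcal{O}(D))$ is indeed smooth, but $\Lambda_{\pi^{*}\omega_g}$ of a smooth $(1,1)$-form is unbounded along the exceptional locus because $\pi^{*}\omega_g$ degenerates in the fibre directions of $\pi$; boundedness would force $\Theta_h$ to vanish on the exceptional fibres, which is impossible since $c_1(\mathcal{O}(D))$ restricted to such a fibre is nonzero. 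Moreover the discrepancy between your $\phi$ and a correct exhaustion function near a codimension-$\ge 2$ stratum is itself unbounded (it \emph{is} the exhaustion there), so it cannot be absorbed into a bounded correction. A construction that does work, and stays close to the spirit of Simpson's proof, is to take the Green potential $\phi(x)=\int_{\Sigma}G(x,y)\,d\mu(y)+C$, where $G$ is the Green function of $(M,g)$ and $\mu$ is the sum over strata of the induced volume measures (with point masses at isolated points): then $\Delta_g\phi$ is constant on $M\setminus\Sigma$, $\phi$ is bounded below, and $\phi\to+\infty$ along $\Sigma$ uniformly, since $\int_{\Sigma}d(x,y)^{2-2n}\,d\mu(y)$ diverges for every positive codimension and Lelong's inequality bounds the local volume of $\Sigma$ through each of its points from below. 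For the purely divisorial components your $-\log|s|^2_h$ already works on $M$ itself, with no resolution needed, because $\log|s|^2$ is pluriharmonic off the zero set.
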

The second assumption in Proposition \ref{assumptions} ensures
\begin{proposition}[Lemma 5.2 in \cite{Si1988}]\label{L2Stokes}
Suppose a Riemannian manifold $(N,g)$ has an exhaustion function $\phi$ with $\Delta_g\phi\in L^1(N)$, $\eta$ is a $\dim N-1$ form with $|\eta|^2\in L^1(N)$, then
	\begin{equation}\begin{split}
			\lim\limits_{j\rightarrow\infty}\int_{N_j}d\eta=0,
	\end{split}\end{equation}
	for a sequence of exhaustion subsets $\{N_j\}_{j\in\mathbb{N}}$.
\end{proposition}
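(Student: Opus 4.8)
The plan is to realize the sequence $\{N_j\}$ as sublevel sets of a Lipschitz-regularized version of the exhaustion function $\phi$ and to estimate $\int_{N_j} d\eta$ by Stokes' theorem, bounding the resulting boundary integral in terms of the $L^1$-norm of $|\eta|$ over a thin annular region which shrinks to measure zero. Concretely, set $N_j = \{\phi < j\}$ for a sequence of regular values $j \to \infty$ (which exist by Sard's theorem after a harmless smoothing of $\phi$), so that each $N_j$ is a manifold with smooth boundary $\partial N_j = \{\phi = j\}$ and $N_j$ exhausts $N$. By Stokes,
\begin{equation*}
	\int_{N_j} d\eta = \int_{\partial N_j} \eta,
\end{equation*}
so it suffices to show the right-hand side tends to zero along a subsequence.

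The key step is a coarea / pigeonhole argument. Introduce a cutoff built from $\phi$: for $j$ large choose a piecewise-linear function $\chi_j$ of $\phi$ equal to $1$ on $\{\phi \le j\}$, equal to $0$ on $\{\phi \ge j+1\}$, and linear in between, so $d\chi_j = -\chi'_j(\phi)\, d\phi$ is supported on the annulus $A_j = \{j < \phi < j+1\}$. Then
\begin{equation*}
	\int_{N_j} d\eta = \int_N \chi_j\, d\eta = -\int_N d\chi_j \wedge \eta = \int_{A_j} \chi'_j(\phi)\, d\phi \wedge \eta,
\end{equation*}
and hence $\left| \int_{N_j} d\eta \right| \le C \int_{A_j} |d\phi|\, |\eta|$. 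Now I would split $|d\phi|\,|\eta|$ using Cauchy–Schwarz or Young's inequality as $|d\phi|\,|\eta| \le \tfrac12(|d\phi|^2 + |\eta|^2)$; the term $\int_{A_j} |\eta|^2 \to 0$ because $|\eta|^2 \in L^1(N)$ and $\mathrm{vol}(A_j) \to 0$ (the $A_j$ are essentially disjoint tails of a finite- or at least locally-finite exhaustion, so their measures are summable, forcing $\int_{A_j}|\eta|^2 \to 0$ by absolute continuity of the integral). For the term $\int_{A_j}|d\phi|^2$, use the hypothesis $\Delta_g \phi \in L^1(N)$: by Green's identity on $N_{j+1}$ against the same cutoff,
\begin{equation*}
	\int_N \chi_j\, |d\phi|^2 = -\int_N \chi_j \phi\, \Delta_g\phi - \int_{A_j} \chi'_j(\phi)\, \phi\, |d\phi|^2 + \text{(lower order)},
\end{equation*}
which lets one control a weighted average of $\int_{A_j}|d\phi|^2$ by $\|\Delta_g\phi\|_{L^1}$; passing to a subsequence of $j$ along which $\int_{A_j}|d\phi|^2$ is small (possible since these quantities cannot all stay bounded below, as their $\phi$-weighted sum is finite) finishes the estimate.

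The main obstacle is the term involving $|d\phi|^2$: unlike $|\eta|^2$, there is no a priori $L^1$ bound on $|d\phi|^2$, only on $\Delta_g \phi = \mathrm{div}(\nabla\phi)$. The trick is therefore not to bound $\int_{A_j}|d\phi|^2$ for every $j$, but to use a telescoping/averaging identity over the dyadic annuli: integrating the Bochner-type identity $\mathrm{div}(\phi \nabla\phi) = |\nabla\phi|^2 + \phi\,\Delta_g\phi$ against a cutoff shows that $\sum_j \int_{A_j}|d\phi|^2$ is controlled (after choosing the levels $j$ appropriately, e.g. along a sparse sequence) by $\|\Delta_g\phi\|_{L^1}$ plus boundary contributions that vanish in the limit, so infinitely many of the $\int_{A_j}|d\phi|^2$ must be small, and along that subsequence both pieces of the boundary term go to zero. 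This is precisely the argument of Simpson \cite[Lemma 5.2]{Si1988}, and I would follow it, taking care that the choice of exhaustion $\{N_j\}$ is the one extracted from this averaging so that the conclusion $\lim_{j\to\infty}\int_{N_j} d\eta = 0$ holds along it.
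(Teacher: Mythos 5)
Your overall skeleton is the right one (it is essentially Simpson's argument, which is all the paper itself invokes): cut off with a function of $\phi$, integrate by parts to trade $\int\chi_j\,d\eta$ for $\int_{A_j}\chi_j'(\phi)\,d\phi\wedge\eta$, and then estimate the annular term. But the way you propose to close the estimate has a genuine gap. Once you split via Young's inequality, $|d\phi|\,|\eta|\le\tfrac12(|d\phi|^2+|\eta|^2)$, you are forced to show $\int_{A_j}|d\phi|^2\to0$ along a subsequence, and this does \emph{not} follow from the hypotheses. Your justification rests on the claim that $\sum_j\int_{A_j}|d\phi|^2$ (equivalently $\int_N|d\phi|^2$) is controlled by $\|\Delta_g\phi\|_{L^1}$ via $\operatorname{div}(\phi\nabla\phi)=|\nabla\phi|^2+\phi\,\Delta_g\phi$; but $\phi$ is an unbounded exhaustion function, so the term $\int\phi\,\Delta_g\phi$ is not controlled by $\|\Delta_g\phi\|_{L^1}$, and in general $|d\phi|^2\notin L^1(N)$. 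What the hypothesis actually yields is only a \emph{uniform bound}: taking $0\le g\le1$ supported in $[j,j+1]$ and $G(t)=\int_t^\infty g$, the function $G(\phi)$ is compactly supported with $0\le G\le1$, so
\begin{equation*}
\int_{A_j}g(\phi)\,|d\phi|^2=-\int_N\langle dG(\phi),d\phi\rangle=\int_N G(\phi)\,\Delta_g\phi\le\|\Delta_g\phi\|_{L^1(N)},
\end{equation*}
independently of $j$, but with no decay. The repair is the one-line switch you mention in passing but do not carry out: use Cauchy--Schwarz instead of Young, so that
\begin{equation*}
\int_{A_j}|d\phi|\,|\eta|\le\Bigl(\int_{A_j}|d\phi|^2\Bigr)^{1/2}\Bigl(\int_{A_j}|\eta|^2\Bigr)^{1/2}\le\|\Delta_g\phi\|_{L^1(N)}^{1/2}\Bigl(\int_{A_j}|\eta|^2\Bigr)^{1/2}\longrightarrow0,
\end{equation*}
since the $A_j$ are pairwise disjoint and $|\eta|^2\in L^1(N)$ (note: you do not need, and cannot in general assert, that $\vol(A_j)\to0$; disjointness alone gives $\int_{A_j}|\eta|^2\to0$). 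With Cauchy--Schwarz, boundedness of the gradient term suffices and no subsequence extraction for $|d\phi|^2$ is needed.

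A secondary, fixable imprecision: the identity $\int_{N_j}d\eta=\int_N\chi_j\,d\eta$ is not exact, since $\chi_j$ is nonconstant on $A_j$ and you have no $L^1$ control on $d\eta$ there. This is harmless only because the proposition asks for the conclusion along \emph{some} sequence of exhaustion subsets: writing $I(t)=\int_{\{\phi<t\}}d\eta$, the coarea formula gives $\int_N\chi_j\,d\eta=\int_j^{j+1}I(t)\,dt$, so one should choose regular values $t_j\in[j,j+1]$ realizing this average (mean value theorem) and take $N_j=\{\phi<t_j\}$. You should make this choice explicit rather than fixing $N_j=\{\phi<j\}$ at the outset.
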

\begin{remark}\label{nonKahlerL2Stokes}
By Simpson's proof, it is not hard to see that Proposition \ref{L2Stokes} also holds for a non-K\"{a}hler manifold $(N,J,g)$ which satisfies $\sqrt{-1}\pa\ol\pa\omega_g^{\dim_{\mathbb{C}}N-1}\geq0$ and admits an exhaustion function $\phi$ such that $\sqrt{-1}\Lambda\pa\ol\pa\phi\in L^1(N)$.
\end{remark}
To prove the stability of $\HE$ metrics, we first prove two lemmas.
\begin{lemma}\label{keylemma1}
For two Hermitian metrics $H_1,H_2$ on a holomorphic vector bundle $(E,\ol\pa_E)$ and a holomorphic sub-bundle $S\subset E$, we denote by $\pi_{H_1}$ the $H_1$-orthogonal projection onto $S$ and $\ol\pa_{S}$ the induced holomorphic structure on $S$, then we have
\begin{enumerate}
\item $H_1|_{S}^{-1}H_2|_S=\pi_{H_1}\circ H_1^{-1}H_2\circ\pi_{H_1}$, $H_2|_{S}^{-1}H_1|_S=\pi_{H_2}\circ H_2^{-1}H_1\circ\pi_{H_2}$.
\item $\ol\pa_{S}(H_1|_{S}^{-1}H_2|_S)=\pi_{H_1}\circ\ol\pa_{E}(H_1^{-1}H_2)\circ\pi_{H_1}
+\ol\pa_{E}\pi_{H_1}\circ(\id_E-\pi_{H_1})\circ(H_1^{-1}H_2)\circ\pi_{H_1}$.
\end{enumerate}
\end{lemma}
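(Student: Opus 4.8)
The plan is to obtain (1) by directly unwinding the definition of the restricted endomorphism, and then to deduce (2) from (1) together with the elementary differential identities satisfied by the $H_1$-orthogonal projection onto a holomorphic sub-bundle. For (1), recall that $H_1|_S^{-1}H_2|_S$ is by definition the unique $\phi\in\End(S)$ satisfying $H_2(s,s')=H_1(\phi(s),s')$ for all local sections $s,s'$ of $S$. Thus it suffices to check that $\pi_{H_1}\circ H_1^{-1}H_2\circ\pi_{H_1}$, whose image lies in $S$ and which we restrict to $S$, has this property. For $s,s'\in\Gamma(S)$ we have $\pi_{H_1}(s)=s$ and $\pi_{H_1}(s')=s'$, and $\pi_{H_1}$ is $H_1$-self-adjoint, so
\begin{equation*}
H_1\big((\pi_{H_1}\circ H_1^{-1}H_2\circ\pi_{H_1})(s),s'\big)=H_1\big(H_1^{-1}H_2(s),\pi_{H_1}(s')\big)=H_2(s,s').
\end{equation*}
This gives the first identity (read, as will be the case throughout, with $\End(S)$ identified with $\{A\in\End(E):A=\pi_{H_1}\circ A\circ\pi_{H_1}\}$), and the second identity follows from the same argument with the roles of $H_1$ and $H_2$ exchanged.

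For (2), put $h:=H_1^{-1}H_2$ and $\pi:=\pi_{H_1}$, so that by (1) the endomorphism $\phi:=H_1|_S^{-1}H_2|_S$ equals $\pi\circ h\circ\pi$. Since $S$ is holomorphic, $\ol\pa_S$ agrees with $\ol\pa_E$ on sections of $S$ and $\ol\pa_E s$ is $S$-valued, hence $\pi(\ol\pa_E s)=\ol\pa_E s$ for $s\in\Gamma(S)$; moreover, applying the Leibniz rule to $\pi(s)=s$ yields the standard identity $(\ol\pa_E\pi)\circ\pi=0$, equivalently $\ol\pa_E\pi=\ol\pa_E\pi\circ(\id_E-\pi)$. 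Now for $s\in\Gamma(S)$, using $\pi(s)=s$ and $\pi(\ol\pa_E s)=\ol\pa_E s$ and expanding by Leibniz,
\begin{equation*}\begin{split}
(\ol\pa_S\phi)(s)&=\ol\pa_E\big(\pi(h(s))\big)-\pi\big(h(\ol\pa_E s)\big)\\
&=(\ol\pa_E\pi)\big(h(s)\big)+\pi\big((\ol\pa_E h)(s)\big)+\pi\big(h(\ol\pa_E s)\big)-\pi\big(h(\ol\pa_E s)\big)\\
&=\Big(\pi\circ(\ol\pa_E h)\circ\pi+\ol\pa_E\pi\circ(\id_E-\pi)\circ h\circ\pi\Big)(s),
\end{split}\end{equation*}
where in the last step we reinserted $s=\pi(s)$ and used $\ol\pa_E\pi=\ol\pa_E\pi\circ(\id_E-\pi)$. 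As this holds for every local section of $S$, the asserted formula follows with $h=H_1^{-1}H_2$.

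The computation is essentially routine; the only points demanding care are keeping track of where the factors $\pi_{H_1}$ and $\id_E-\pi_{H_1}$ are inserted, the correct reading of $H_1|_S^{-1}H_2|_S$ and $\ol\pa_S(H_1|_S^{-1}H_2|_S)$ as $\End(E)$-valued objects conjugated by $\pi_{H_1}$, and the two holomorphicity facts about $\pi_{H_1}$ recalled above. I do not expect any genuine obstacle. One may note that, because $\ol\pa_E\pi_{H_1}=\ol\pa_E\pi_{H_1}\circ(\id_E-\pi_{H_1})$, the factor $\id_E-\pi_{H_1}$ in the stated formula is in fact redundant; it is kept only to display the second-fundamental-form structure of the correction term.
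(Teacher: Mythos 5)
Your proof is correct and follows essentially the same route as the paper's: part (1) is the identical bilinear-form computation, and part (2) is the same Leibniz-rule calculation relative to the $H_1$-orthogonal splitting, which the paper merely packages as a block-matrix computation of $\ol\pa_E(H_1^{-1}H_2)$ with $B=(\id_E-\pi_{H_1})\circ(H_1^{-1}H_2)\circ\pi_{H_1}$ appearing as the lower-left block. Your closing observation that $\ol\pa_E\pi_{H_1}\circ(\id_E-\pi_{H_1})=\ol\pa_E\pi_{H_1}$, so the factor $\id_E-\pi_{H_1}$ in the stated formula is redundant, is also correct.
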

\begin{proof}
For $u,v\in C^\infty(X,S)$, we have
\begin{equation}\begin{split}
H_2|_S(u,v)&=H_1(H_1^{-1}H_2(u),v)
\\&=H_1((\pi_{H_1}\circ H_1^{-1}H_2)(u),v)
\\&=H_1|_S((\pi_{H_1}\circ H_1^{-1}H_2)(u),v),
\end{split}\end{equation}
and this proves (1). Let us write $E=S\oplus S^\perp$ for the $H_1$-orthogonal decomposition and with respect to this decomposition, we have
\begin{equation}\begin{split}
	\ol\pa_E=\left(\begin{array}{ccc}
		\ol\pa_S & -\ol\pa_E\pi_{H_1}\\
		 0 & \ol\pa_{S^\perp}
	 \end{array}\right),\
 H_1^{-1}H_2=\left(\begin{array}{ccc}
 		H_1|_S^{-1}H_2|_S & A\\
 	B & C
 \end{array}\right),
	\end{split}\end{equation}
where $\ol\pa_{S^\perp}$ is the induced holomorphic structure on $S^\perp$. Then we deduce
\begin{equation}\begin{split}
&\ol\pa_E(H_1^{-1}H_2)
\\&=\left(\begin{array}{ccc}
	\ol\pa_S & -\ol\pa_E\pi_{H_1}\\
	0 & \ol\pa_{S^\perp}
\end{array}\right)\left(\begin{array}{ccc}
H_1|_S^{-1}H_2|_S & A\\
B & C
\end{array}\right)
\\&-\left(\begin{array}{ccc}
	H_1|_S^{-1}H_2|_S & A\\
	B & C
\end{array}\right)\left(\begin{array}{ccc}
\ol\pa_S & -\ol\pa_E\pi_{H_1}\\
0 & \ol\pa_{S^\perp}
\end{array}\right)
\\&=\left(\begin{array}{ccc}
	\ol\pa_S\circ H_1|_S^{-1}H_2|_S-\ol\pa_E\pi_{H_1}\circ B & \ol\pa_S\circ A-\ol\pa_E\pi_{H_1}\circ C\\
	\ol\pa_{S^\perp}\circ B & \ol\pa_{S^\perp}\circ C
\end{array}\right)
\\&-\left(\begin{array}{ccc}
H_1|_S^{-1}H_2|_S\circ\ol\pa_S & -H_1|_S^{-1}H_2|_S\circ\ol\pa_E\pi_{H_1}+A\circ\ol\pa_{S^\perp}\\
B\circ\ol\pa_S & -B\circ\ol\pa_E\pi_{H_1}+C\circ\ol\pa_{S^\perp}
\end{array}\right)
\\&=\left(\begin{array}{ccc}
	\ol\pa_{S}(H_1|_{S}^{-1}H_2|_S) & \ol\pa_{(S^\perp)^\ast\otimes S}A
	\\
	\ol\pa_{S^\ast\otimes S^\perp}B & \ol\pa_{S^\perp} C
\end{array}\right)
\\&+\left(\begin{array}{ccc}
-\ol\pa_E\pi_{H_1}\circ B & -\ol\pa_E\pi_{H_1}\circ C+H_1|_S^{-1}H_2|_S\circ\ol\pa_E\pi_{H_1}
	\\
	0 & B\circ\ol\pa_E\pi_{H_1}
\end{array}\right),
\end{split}\end{equation}
and also note $B=(\id_E-\pi_{H_1})\circ(H_1^{-1}H_2)\circ\pi_{H_1}$, this proves $(2)$.
\end{proof}
\begin{lemma}\label{keylemma2}
Let $(E,\ol\pa_E)$ be a holomorphic vector bundle on a K\"{a}hler manifold $(N,J,g)$ admitting an exhaustion function $\phi$ such that $\Delta_g\phi\in L^1(N)$ and $S\subset E$ be a holomorphic sub-bundle. Suppose $H_1,H_2$ are two Hermitian metrics on $E$ compatible with each other and with $|\Lambda F_{H_1}|_{H_1}, |\Lambda F_{H_2}|_{H_2}\in L^1(N)$. Denote by $\pi_{H_1}$, $\pi_{H_2}$ the $H_1$-orthogonal, $H_2$-orthogonal projections onto $S$ respectively, it holds for a sequence of exhaustion subsets $\{N_j\}_{j\in\mathbb{N}}$ that
\begin{equation}\begin{split}
\lim\limits_{j\rightarrow\infty}\int_{N_j}\tr\sqrt{-1}\Lambda F_{H_2|_S}\dvol_g=\deg(S,H_1).
\end{split}\end{equation}
Moreover, $|\ol\pa_E\pi_{H_1}|_{H_1}\in L^2(N)$ if and only if $|\ol\pa_E\pi_{H_2}|_{H_2}\in L^2(N)$.
\end{lemma}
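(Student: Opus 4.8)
The plan is to compute the difference of the two integrands $\tr\sqrt{-1}\Lambda F_{H_2|_S}$ and $\tr\sqrt{-1}\Lambda F_{H_1|_S}$ and show it is, up to an exact form and controlled error, the Laplacian of a bounded quantity, so that the $L^2$-Stokes lemma (Proposition \ref{L2Stokes}, i.e. \cite[Lemma 5.2]{Si1988}) forces the limits of the integrals to coincide. Write $g_S = H_1|_S^{-1}H_2|_S \in C^\infty(N,\End(S))$. The standard first-variation identity on $S$ gives
\begin{equation}\begin{split}
\tr\sqrt{-1}\Lambda F_{H_2|_S} - \tr\sqrt{-1}\Lambda F_{H_1|_S}
= \sqrt{-1}\Lambda\ol\pa_S\tr(g_S^{-1}\pa_{H_1|_S}g_S)
= \sqrt{-1}\Lambda\pa\ol\pa\log\det g_S,
\end{split}\end{equation}
where $\log\det g_S = \tr\log g_S$, and by Lemma \ref{keylemma1}(1), $g_S = \pi_{H_1}\circ H_1^{-1}H_2\circ\pi_{H_1}$ acting on $S$, so $|\log\det g_S| \le C|\log(H_1^{-1}H_2)|$ is bounded since $H_1,H_2$ are compatible (hence $|H_1^{-1}H_2|,|H_2^{-1}H_1|\in L^\infty$). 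Thus $\log\det g_S$ is a bounded function on $N$, and because $|\Lambda F_{H_i}|_{H_i}\in L^1(N)$, each term $\int_{N_j}\tr\sqrt{-1}\Lambda F_{H_i|_S}$ behaves like the degree integrals. I would then argue that $\sqrt{-1}\Lambda\pa\ol\pa(\log\det g_S)$ is $d$ of the $(2\dim_{\mathbb C}N-1)$-form $\eta = \sqrt{-1}\ol\pa(\log\det g_S)\wedge\omega_g^{\dim_{\mathbb C}N-1}/(\dim_{\mathbb C}N-1)!$ (using that $\omega_g$ is Kähler so $d\omega_g=0$), and that $|\ol\pa(\log\det g_S)| \in L^2(N)$: indeed $\ol\pa\log\det g_S = \tr(g_S^{-1}\ol\pa_S g_S)$, and by Lemma \ref{keylemma1}(2), $\ol\pa_S g_S$ is expressed via $\ol\pa_E(H_1^{-1}H_2)$ and $\ol\pa_E\pi_{H_1}$; the first is $L^2$ by compatibility, and the second — well, this is where the second assertion enters and the argument must be organized carefully.

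For the degree identification I would use the Chern--Weil formula (\ref{ChernWeilformula}): $\deg(S,H_1) = \lim_j\int_{N_j}(\tr(\sqrt{-1}\pi_{H_1}\circ\Lambda F_{H_1}) - |\ol\pa_E\pi_{H_1}|^2)\dvol_g$, and one has $\tr\sqrt{-1}\Lambda F_{H_1|_S} = \tr(\sqrt{-1}\pi_{H_1}\circ\Lambda F_{H_1}) - |\ol\pa_E\pi_{H_1}|^2_{H_1}$ pointwise (the second fundamental form contribution). Combined with the bounded-exact-form argument above, $\lim_j\int_{N_j}\tr\sqrt{-1}\Lambda F_{H_2|_S} = \lim_j\int_{N_j}\tr\sqrt{-1}\Lambda F_{H_1|_S} = \deg(S,H_1)$, which is the first claim. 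Note that once this is established, applying the same argument with the roles of $H_1$ and $H_2$ swapped and comparing gives $\deg(S,H_1) = \deg(S,H_2)$ as a byproduct, though only the first formula is asserted.

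For the equivalence of $L^2$-ness of the second fundamental forms, I would run the following bootstrap: first prove $\lim_j\int_{N_j}\tr\sqrt{-1}\Lambda F_{H_2|_S}\dvol_g = \deg(S,H_1)$ \emph{under the a priori assumption} that $|\ol\pa_E\pi_{H_1}|_{H_1}\in L^2$ — this is legitimate because when $|\ol\pa_E\pi_{H_1}|_{H_1}\notin L^2$, the statement $\deg(S,H_1)$ is conventionally $-\infty$ and one shows the left side also diverges to $-\infty$. Then, to get the equivalence itself, use Lemma \ref{keylemma1}(2): $\ol\pa_S g_S = \pi_{H_1}\ol\pa_E(H_1^{-1}H_2)\pi_{H_1} + \ol\pa_E\pi_{H_1}\circ(\id_E-\pi_{H_1})\circ(H_1^{-1}H_2)\circ\pi_{H_1}$. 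Since $(\id-\pi_{H_1})(H_1^{-1}H_2)\pi_{H_1}$ is invertible-up-to-bounds on its natural domain modulo $\pi_{H_1}\ol\pa_E(H_1^{-1}H_2)\pi_{H_1}\in L^2$, control of $\ol\pa_S g_S$ in $L^2$ is equivalent (modulo $L^2$ terms from compatibility) to control of $\ol\pa_E\pi_{H_1}\cdot(\text{bounded, bounded-below factor})$; and similarly with $H_1\leftrightarrow H_2$. Hence $|\ol\pa_E\pi_{H_1}|_{H_1}\in L^2 \iff \ol\pa_S g_S\in L^2 \iff \ol\pa_S g_S^{-1}\in L^2 \iff |\ol\pa_E\pi_{H_2}|_{H_2}\in L^2$, the middle equivalence using $g_S^{-1}$ bounded above and below.

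The main obstacle I anticipate is the bookkeeping around the factor $(\id_E - \pi_{H_1})\circ(H_1^{-1}H_2)\circ\pi_{H_1}$ in Lemma \ref{keylemma1}(2): to convert an $L^2$ bound on $\ol\pa_S g_S$ into one on $\ol\pa_E\pi_{H_1}$ one needs this operator to be bounded below (as a map from the relevant Hom-bundle), which requires knowing that its ``off-diagonal'' block is nondegenerate — this is not automatic and has to be extracted from the compatibility hypothesis $|H_1^{-1}H_2|, |H_2^{-1}H_1| \in L^\infty$, perhaps by working with $\log(H_1^{-1}H_2)$ and noting its eigenvalues are bounded, so $(H_1^{-1}H_2)$ has eigenvalues bounded away from $0$ and $\infty$, forcing the relevant block to have a uniform lower bound on the orthogonal complement of $\ker$. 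A secondary subtlety is justifying the passage to the limit in the integrals over $N_j$ for the term that is not literally exact (the pointwise error terms involving $|\Lambda F_{H_i}|_{H_i}$): these are handled by $L^1$-integrability plus dominated convergence, but one must be careful that the exhaustion $\{N_j\}$ is the \emph{same} one furnished by Proposition \ref{L2Stokes} so all limits refer to a common sequence.
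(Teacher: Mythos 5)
Your treatment of the degree identity is essentially the paper's argument in different notation: the form you integrate, $\sqrt{-1}\,\ol\pa(\log\det g_S)\wedge\omega_g^{\dim_{\mathbb{C}}N-1}$, is exactly the paper's $\sqrt{-1}\tr\bigl(g_S^{-1}\pa_{H_1|_S}g_S\bigr)\wedge\omega_g^{\dim_{\mathbb{C}}N-1}$ (since $\tr(g_S^{-1}\pa g_S)=\pa\log\det g_S$), the $L^2$ bound on it is obtained in both cases from Lemma \ref{keylemma1}(2) together with $|\ol\pa_E(H_1^{-1}H_2)|\in L^2$, $|H_1^{-1}H_2|^{\pm1}\in L^\infty$ and $|\ol\pa_E\pi_{H_1}|_{H_1}\in L^2$, and the conclusion follows from Proposition \ref{L2Stokes}. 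This part is fine, with the understanding (shared by the paper, which announces it only proves one implication) that the whole computation takes place under the hypothesis $|\ol\pa_E\pi_{H_1}|_{H_1}\in L^2(N)$.

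The gap is in your argument for the equivalence of the two $L^2$ conditions. You propose to invert Lemma \ref{keylemma1}(2), arguing that the factor $B=(\id_E-\pi_{H_1})\circ(H_1^{-1}H_2)\circ\pi_{H_1}$ is ``bounded below'' so that $L^2$ control of $\ol\pa_S g_S$ is equivalent to $L^2$ control of $\ol\pa_E\pi_{H_1}$. This cannot be extracted from the compatibility hypothesis: $B$ is the off-diagonal block of $H_1^{-1}H_2$ in the $H_1$-orthogonal decomposition $E=S\oplus S^\perp$, and it vanishes identically whenever the two metrics induce the same orthogonal complement of $S$ (e.g.\ $H_1=H_2$, where $\ol\pa_S g_S=0$ while $\ol\pa_E\pi_{H_1}$ is arbitrary). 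The eigenvalue bounds on $H_1^{-1}H_2$ control its full spectrum, not the nondegeneracy of an off-diagonal block, so the chain $|\ol\pa_E\pi_{H_1}|\in L^2\Leftrightarrow\ol\pa_Sg_S\in L^2\Leftrightarrow|\ol\pa_E\pi_{H_2}|\in L^2$ breaks at both ends. The paper avoids this entirely: having established $\lim_{j}\int_{N_j}\tr\sqrt{-1}\Lambda F_{H_2|_S}\dvol_g=\deg(S,H_1)$ (finite, by $(\ref{finite1})$--$(\ref{finite2})$), it reads off
\begin{equation*}
\int_N|\ol\pa_E\pi_{H_2}|^2_{H_2}\dvol_g=\lim_{j\rightarrow\infty}\int_{N_j}\left(\tr(\sqrt{-1}\pi_{H_2}\circ\Lambda F_{H_2})-\tr\sqrt{-1}\Lambda F_{H_2|_S}\right)\dvol_g<+\infty
\end{equation*}
directly from the Chern--Weil identity for $H_2$ and $|\Lambda F_{H_2}|_{H_2}\in L^1(N)$; the converse implication is then the same argument with $H_1$ and $H_2$ exchanged. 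You should replace your block-inversion step with this. A second, smaller issue: your claim that when $|\ol\pa_E\pi_{H_1}|_{H_1}\notin L^2$ the left-hand side of the degree identity also diverges to $-\infty$ is asserted but not proved, and it is not needed for any application, so it is cleaner to simply restrict the statement to the $L^2$ case as the paper implicitly does.
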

\begin{proof}
We only prove $|\ol\pa_E\pi_{H_1}|_{H_1}\in L^2(N)$ implies $|\ol\pa_E\pi_{H_2}|_{H_2}\in L^2(N)$. Recall
\begin{equation}\begin{split}
		\deg(S,H_1)
		&=\int_{N}\tr\sqrt{-1}\Lambda F_{H_1|_S}\dvol_g
		\\&=\int_{N}\left(\tr(\sqrt{-1}\pi_{H_1}\circ\Lambda F_{H_1})-|\ol\pa_E\pi_{H_1}|^2_{H_1}\right)\dvol_g,
\end{split}\end{equation}
it follows
\begin{equation}\begin{split}\label{finite1}
		\deg(S,H_1)
		&\leq\int_{N}\left(\sqrt{\rank(S)}|\Lambda F_{H_1}|_{H_1}-|\ol\pa_E\pi_{H_1}|^2_{H_1}\right)\dvol_g
		\\&\leq\sqrt{\rank(S)}\int_N|\Lambda F_{H_1}|_{H_1}\dvol_g
		\\&<+\infty,
\end{split}\end{equation}
and also
\begin{equation}\begin{split}\label{finite2}
	    \deg(S,H_1)
	    &\geq-\sqrt{\rank(S)}\int_N\left(|\Lambda F_{H_1}|_{H_1}+|\ol\pa_E\pi_{H_1}|^2_{H_1}\right)\dvol_g
	    \\&>-\infty.
\end{split}\end{equation}
By Lemma \ref{keylemma1} and assumptions, we have
\begin{equation}\begin{split}
		&||\tr((H_1|_S^{-1}K|_S)^{-1}\pa_{K|_S}(H_1|_S^{-1}K|_S))\wedge\omega_g^{\dim_{\mathbb{C}}-1}||_{L^2(N)}
		\\&=||\tr(\ol\pa_{S}(H_1|_S^{-1}K|_S)(H_1|_S^{-1}K|_S)^{-1})\wedge\omega_g^{\dim_{\mathbb{C}}-1}||_{L^2(N)}
		\\&<\infty,
\end{split}\end{equation}
thus Lemma \ref{L2Stokes} implies
\begin{equation}\begin{split}\label{difference1}
		&\lim\limits_{j\rightarrow\infty}\int_{N_j}
		\tr\sqrt{-1}\Lambda\ol\pa_{S}\left((H_1|_S^{-1}K|_S)^{-1}\pa_{K|_S}(H_1|_S^{-1}K|_S)\right)\dvol_g
		\\&=\lim\limits_{j\rightarrow\infty}\int_{N_j}
		\tr\sqrt{-1}\ol\pa_{S}\left((H_1|_S^{-1}K|_S)^{-1}\pa_{K|_S}(H_1|_S^{-1}K|_S)\right)\wedge\frac{\omega_g^{\dim_{\mathbb{C}}N-1}}{(\dim_{\mathbb{C}}N-1)!}
		\\&=\lim\limits_{j\rightarrow\infty}\int_{N_j}
		d\sqrt{-1}\left(\tr((H_1|_S^{-1}K|_S)^{-1}\pa_{K|_S}(H_1|_S^{-1}K|_S))\wedge\frac{\omega_g^{\dim_{\mathbb{C}}N-1}}{(\dim_{\mathbb{C}}N-1)!}\right)
		\\&=0.
\end{split}\end{equation}
Then by $(\ref{finite1})$, $(\ref{finite2})$, $(\ref{difference1})$, we further deduce
\begin{equation}\begin{split}
	&\lim\limits_{j\rightarrow\infty}\int_{N_j}\tr\sqrt{-1}\Lambda F_{H_2|_S}\dvol_g
	\\&=\lim\limits_{j\rightarrow\infty}\int_{N_j}\left(\tr\sqrt{-1}\Lambda F_{H_1|_S}+\tr\sqrt{-1}\ol\pa_S((H_1|_{S}^{-1}H_2|_S)^{-1}\pa_{K|_S}(H_1|_{S}^{-1}H_2|_S))\right)\dvol_g
	\\&=\lim\limits_{j\rightarrow\infty}\int_{N_j}\tr\sqrt{-1}\Lambda F_{H_1|_S}\dvol_g
    \\&+\lim\limits_{j\rightarrow\infty}\int_{N_j}\tr\sqrt{-1}\ol\pa_S((H_1|_{S}^{-1}H_2|_S)^{-1}\pa_{K|_S}(H_1|_{S}^{-1}H_2|_S))\dvol_g
    \\&=\deg(S,H_1),
\end{split}\end{equation}
and therefore
\begin{equation}\begin{split}
		\int_N|\ol\pa_E\pi_{H_2}|_{H_2}^2\dvol_g
		&=\lim\limits_{j\rightarrow\infty}\int_{N_j}|\ol\pa_E\pi_{H_2}|_{H_2}^2\dvol_g
		\\&=\lim\limits_{j\rightarrow\infty}\int_{N_j}\left(\tr(\sqrt{-1}\pi_{H_2}\circ\Lambda F_{H_2})-\tr\sqrt{-1}\Lambda F_{H_2|_S}\right)\dvol_g
		\\&\leq\sqrt{\rank(S)}\int_X|\Lambda F_{H_2}|_{H_2}\dvol_g+\deg(S,H_1)
		\\&<+\infty.
\end{split}\end{equation}
\end{proof}
Now we are in a position to prove the stability of $\HE$ metrics.
\begin{proof}[\textup{\textbf{Proof of Theorem \ref{thm4}}}]
Since $\det K=\det H$ and $|\Lambda F_K|\in L^1(M\setminus\Sigma)$, we have
\begin{equation}\begin{split}\label{thm32}
			\frac{\deg(E,H)}{\rank(E)}
			&=\frac{\int_{M\setminus\Sigma}\tr\sqrt{-1}\Lambda F_{H}\dvol_g}{\rank(E)}
			\\&=\frac{\int_{M\setminus\Sigma}\tr\sqrt{-1}\Lambda F_{K}\dvol_g}{\rank(E)}
			\\&\leq\frac{\int_{M\setminus\Sigma}|\Lambda F_{K}|_K|\id_E|\dvol_g}{\sqrt{\rank(E)}}
			\\&<+\infty.
\end{split}\end{equation}
For any any nontrivial saturated sub-sheaf $\mathcal{S}$ and if $|\ol\pa_E\pi_{H}|_{H}\notin L^2(M\setminus(\Sigma\cup Z(\mathcal{S})))$, we have
\begin{equation}\begin{split}
		\frac{\deg(\mathcal{S},H)}{\rank(\mathcal{S})}
		&=\frac{\int_{M\setminus(\Sigma\cup Z(\mathcal{S}))}\left(\tr(\sqrt{-1}\pi_{H}\circ\Lambda F_{H})-|\ol\pa_E\pi_{H}|^2_{H}\right)\dvol_g}{\rank(\mathcal{S})}
		\\&=\frac{\int_{M\setminus(\Sigma\cup Z(\mathcal{S}))}\left(\tr(\sqrt{-1}\pi_{H}\circ\frac{\tr\Lambda F_H}{\rank(E)})-|\ol\pa_E\pi_{H}|^2_{H}\right)\dvol_g}{\rank(\mathcal{S})}
		\\&=\frac{\int_{M\setminus(\Sigma\cup Z(\mathcal{S}))}\left(\tr(\sqrt{-1}\pi_{H}\circ\frac{\tr\Lambda F_K}{\rank(E)})-|\ol\pa_E\pi_{H}|^2_{H}\right)\dvol_g}{\rank(\mathcal{S})}
		\\&=\frac{\deg(E,K)}{\rank(E)}-\frac{\int_{M\setminus(\Sigma\cup Z(\mathcal{S}))}|\ol\pa_E\pi_{H}|^2_{H}\dvol_g}{\rank(\mathcal{S})}.
\end{split}\end{equation}
and hence $\frac{\deg(\mathcal{S},H)}{\rank(\mathcal{S})}=-\infty<\frac{\deg(E,H)}{\rank(E)}$.
If $|\ol\pa_{E}\pi_{H}|_{H}\in L^2(M\setminus(\Sigma\cup Z(\mathcal{S})))$, $\deg(\mathcal{S},H)$ is finite and using Lemma \ref{keylemma2} with $H_1=H, H_2=K$, we have
\begin{equation}\begin{split}\label{uniqueness1}
		&\lim\limits_{j\rightarrow\infty}\int_{X_j}\tr\sqrt{-1}\Lambda F_{K|_S}\dvol_g
		=\deg(S,H),
\end{split}\end{equation}
for a sequence of exhaustion subsets $\{X_j\}_{j\in\mathbb{N}}$ of $M\setminus(\Sigma\cup Z(\mathcal{S}))$.
Moreover, Lemma \ref{keylemma2} also gives $|\ol\pa_E\pi_K|_K^2\in L^2(M\setminus(\Sigma\cup Z(\mathcal{S})))$ and thus $\tr\sqrt{-1}\Lambda F_{K|_S}$ is absolutely integrable since
\begin{equation}\begin{split}
		&\int_{M\setminus(\Sigma\cup Z(\mathcal{S}))}|\tr\sqrt{-1}\Lambda F_{K|_S}|\dvol_g
		\\&=\int_{M\setminus(\Sigma\cup Z(\mathcal{S}))}|\tr(\sqrt{-1}\pi_{K}\circ\Lambda F_{K})-|\ol\pa_E\pi_{K}|^2_{K}|\dvol_g.
\end{split}\end{equation}
Hence we have
\begin{equation}\begin{split}\label{uniqueness2}
		\lim\limits_{j\rightarrow\infty}\int_{X_j}\tr\sqrt{-1}\Lambda F_{K|_S}\dvol_g
		&=\int_{M\setminus(\Sigma\cup Z(\mathcal{S}))}\tr\sqrt{-1}\Lambda F_{K|_S}\dvol_g
		\\&=\deg(\mathcal{S},K).
\end{split}\end{equation}
By the stability of $(E,\ol\pa_E,K)$, $(\ref{uniqueness1})$ and $(\ref{uniqueness2})$, we deduce
\begin{equation}\begin{split}\label{thm33}
		\frac{\deg(\mathcal{S},H)}{\rank(\mathcal{S})}
		&=\frac{\deg(\mathcal{S},K)}{\rank(\mathcal{S})}
		\\&<\frac{\deg(E,K)}{\rank(E)}
		\\&=\frac{\deg(E,H)}{\rank(E)}.
\end{split}\end{equation}
Therefore, we obtain stability of $(E,\ol\pa_E,H)$ from stability of $(E,\ol\pa_E,K)$.
\end{proof}
Based on the stability of $\HE$ metrics, we are able to prove the uniqueness of $\HE$ metrics.
\begin{proof}[\textup{\textbf{An alternative proof of Theorem \ref{thm3}}}]
Firstly, we compute
\begin{equation}\begin{split}\label{difference2}
\Delta_g\tr(H_1^{-1}H_2)
&=2\sqrt{-1}\Lambda\pa\ol\pa\tr(H_1^{-1}H_2)
\\&=<\sqrt{-1}\Lambda(F_{H_1}-F_{H_2}),H_1^{-1}H_2>_{H_1}
\\&+\sqrt{-1}\Lambda<H_2^{-1}H_1\pa_{H_1}(H_1^{-1}H_2),\ol\pa_{H_1}(H_1^{-1}H_2)>_{H_1}
\\&=|(H_2^{-1}H_1)^{-\frac{1}{2}}\pa_{H_1}(H_1^{-1}H_2)|^2_{H_1}.
\end{split}\end{equation}
By Proposition \ref{assumptions}, we get $\pa_{H_1}(H_1^{-1}H_2)=0$ and $\ol\pa_{E}(H_1^{-1}H_2)=0$ since $H_1^{-1}H_2$ is self-adjoint with respect to both $H_1$, $H_2$. Let $E=\oplus_{i=1}^{m}E_i$ denote the eigendecomposition of $H_1^{-1}H_2$. Since $H_1^{-1}H_2$ is $D_{H_1}$-parallel, the eigenvalues $\{c_i\}_{i=1}^m$ are constants and therefore 
\begin{equation}\begin{split}
		H_2|_{E_i}=c_iH_1|_{E_i},\ i=1,...,m.
\end{split}\end{equation}
For $v_i\in C^\infty(M\setminus\Sigma,E_i), v_j\in C^\infty(M\setminus\Sigma,E_j)$, we have
\begin{equation}\begin{split}
H_1^{-1}H_2(\ol\pa_Ev_i)=\ol\pa_E((H_1^{-1}H_2)(v_i))=c_i\ol\pa_Ev_i,
\end{split}\end{equation}
\begin{equation}\begin{split}
		H_2(v_i,v_j)=H_1(H_1^{-1}H_2(v_i),v_j)=c_iH_1(v_i,v_j),
\end{split}\end{equation}
\begin{equation}\begin{split}
		 H_2(v_i,v_j)=H_1(v_i,H_1^{-1}H_2(v_j))=c_jH_1(v_i,v_j).
\end{split}\end{equation}
Thus the decomposition splits in $H_1$-orthogonal, $H_2$-orthogonal and holomorphic senses.
\par It suffices to show $m=1$. Otherwise, each $E_i$ is a nontrivial holomorphic sub-bundle and
\begin{equation}\begin{split}
\frac{\deg(E,H_1)}{\rank(E)}
&=\frac{\int_{M\setminus\Sigma}\tr\sqrt{-1}\Lambda F_{H_1}\dvol_g}{\rank(E)}
\\&=\sum\limits_{i=1}^m\frac{\int_{M\setminus\Sigma}\tr\sqrt{-1}\Lambda F_{H_1|_{E_i}}\dvol_g}{\rank(E)}
\\&=\sum\limits_{i=1}^m\frac{\int_{M\setminus\Sigma}\tr\sqrt{-1}\Lambda F_{H_1|_{E_i}}\dvol_g}{\rank(E_i)}\frac{\rank(E_i)}{\rank(E)}
\\&=\sum\limits_{i=1}^m\frac{\deg(E_i,H_1)}{\rank(E_i)}\frac{\rank(E_i)}{\rank(E)}.
\end{split}\end{equation}
On the other hand, recall that we have proved the stability of $(E,\ol\pa_E,H_1)$ in Theorem \ref{thm4}, so it leads to the following contradiction
\begin{equation}\begin{split}
		\frac{\deg(E,H_1)}{\rank(E)}
		&<\frac{\deg(E,H_1)}{\rank(E)}\sum\limits_{i=1}^m\frac{\rank(E_i)}{\rank(E)}
		=\frac{\deg(E,H_1)}{\rank(E)},
\end{split}\end{equation}
and therefore $H_1=H_2$.
\end{proof}
\begin{remark}
Since the proof only involves stabilities of $(E,\ol\pa_E,H_1)$ and $(E,\ol\pa_E,K)$, the assumption $|\ol\pa_E(K^{-1}H_2)|\in L^2(M\setminus\Sigma)$ is superfluous.
\end{remark}
Next we prove the stability (resp. semi-stability) of a Hermitian holomorphic vector bundle which admits $\HE$ metrics (resp. $\AHE$ structures).
\begin{proof}[\textup{\textbf{Proof of Theorem \ref{thm5}}}]
For (1), once $H$ is a $\HE$ metric, it is well-known that $(E,\ol\pa_E,H)$ is a direct sum of some holomorphic vector bundles $(E_i,\ol\pa_E|_{E_i},H|_{E_i})$ and each of them is stable, see \cite{Ko1982,Lu1983} for example. Since $(E,\ol\pa_E)$ is assumed to be indecomposable, we further find the stability of $(E,\ol\pa_E,H)$, from which and a similar argument in our proof Theorem \ref{thm4} (only need to exchange the roles of $K$ and $H$), we can conclude the the stability of $(E,\ol\pa_E,K)$.
\par Next we prove (2). Note in compact case, the semi-stability of a holomorphic vector bundle admitting an $\AHE$ structure follows from a vanishing theorem \cite{Ko1987}. It requires additional work to generalize the argument to noncompact context. The proof below bypasses the use of any vanishing theorem which seems more natural and directly. Let $\{H_\epsilon\}_{\epsilon>0}$ be an $\AHE$ structure compatible with $K$. Following \cite{WZ2021}, we have for any saturated nontrivial sub-sheaf $\mathcal{S}$ that
	\begin{equation}\begin{split}\label{thm31}
			\frac{\deg(\mathcal{S},H_\epsilon)}{\rank(\mathcal{S})}
			&=\frac{\int_{M\setminus(\Sigma\cup Z(\mathcal{S}))}\tr(\pi_{H_\epsilon}\circ\sqrt{-1}\Lambda F_{H_\epsilon})-|\ol\pa_E\pi_{H_\epsilon}|^2_{H_\epsilon}\dvol_g}{\rank(\mathcal{S})}
			\\&\leq\frac{\int_{M\setminus(\Sigma\cup Z(\mathcal{S}))}\tr(\pi_{H_\epsilon}\circ\sqrt{-1}\Lambda F_{H_\epsilon})\dvol_g}{\rank(\mathcal{S})}
			\\&=\frac{\int_{M\setminus(\Sigma\cup Z(\mathcal{S}))}\tr(\pi_{H_\epsilon}\circ\frac{\tr\sqrt{-1}\Lambda F_{H_\epsilon}}{\rank(E)})+\tr(\pi_{H_\epsilon}\circ\sqrt{-1}\Lambda F_{H_\epsilon}^\perp)\dvol_g}{\rank(\mathcal{S})}
			\\&\leq\frac{\deg(E,K)}{\rank(E)}+\frac{\int_{M\setminus(\Sigma\cup Z(\mathcal{S}))}|\pi_{H_\epsilon}|_{H_\epsilon}|\Lambda F_{H_\epsilon}^\perp|_{H_\epsilon}\dvol_g}{\rank(\mathcal{S})}
			\\&\leq\frac{\deg(E,K)}{\rank(E)}+\frac{\epsilon\vol(M)}{\sqrt{\rank(\mathcal{S})}}.
	\end{split}\end{equation}
If $|\ol\pa_E\pi_K|_K\notin L^2(M\setminus(\Sigma\cup Z(\mathcal{S})))$, we have
	\begin{equation}\begin{split}
			\frac{\deg(\mathcal{S},K)}{\rank(\mathcal{S})}
			&\leq\frac{\int_{M\setminus(\Sigma\cup Z(\mathcal{S}))}\left(\sqrt{\rank(\mathcal{S})}|\Lambda F_K|-|\ol\pa_E\pi_K|_K^2\right)\dvol_g}{\rank(\mathcal{S})}
			\\&=-\infty
			\\&<\frac{\deg(E,K)}{\rank(E)}.
	\end{split}\end{equation}
If $|\ol\pa_{\theta}\pi_K|_K\in L^2(M\setminus(\Sigma\cup Z(\mathcal{S})))$, we know $\deg(\mathcal{S},K)$ is finite. Note that
	\begin{equation}\begin{split}
		\int_{M\setminus\Sigma}|\Lambda F_{H_\epsilon}|_{H_\epsilon}\dvol_g
		&=\int_{M\setminus\Sigma}(|\Lambda F_{H_\epsilon}^\perp|_{H_\epsilon}+|\frac{\tr\Lambda F_{K}}{\rank(E)}\id_E|_{H_\epsilon})\dvol_g,
\end{split}\end{equation}
therefore $|\Lambda F_{H_\epsilon}|_{H_\epsilon}\in L^1(M\setminus\Sigma)$ since $\{H_\epsilon\}$ is an $\AHE$ structure. Using Lemma \ref{keylemma2} with $H_1=K, H_2=H_\epsilon$ and that in the proof of Theorem \ref{thm4} as before, we can conclude
\begin{equation}\begin{split}
		\deg(\mathcal{S},H_{\epsilon})
		&=\lim\limits_{j\rightarrow\infty}\int_{X_j}\tr\sqrt{-1}\Lambda F_{H_{\epsilon}|_S}\dvol_g
		=\deg(S,K),
\end{split}\end{equation}
for a sequence of exhaustion subsets $\{X_j\}_{j\in\mathbb{N}}$ of $M\setminus(\Sigma\cup Z(\mathcal{S}))$.
Hence we have
	\begin{equation}\begin{split}\label{thm35}
			\frac{\deg(\mathcal{S},K)}{\rank(\mathcal{S})}
			&=\frac{\deg(\mathcal{S},H_\epsilon)}{\rank(\mathcal{S})}
			\\&\leq\frac{\deg(E,K)}{\rank(E)}+\frac{\epsilon\vol(M)}{\sqrt{\rank(\mathcal{S})}}
			\\&\xrightarrow{\epsilon\rightarrow0}\frac{\deg(E,K)}{\rank(E)}.
	\end{split}\end{equation}
The proof is completed.
\end{proof}
Our proof also works for $\mathbb{C}$, $\mathbb{C}\times M$ for a compact K\"{a}hler manifold, and non-K\"{a}hler context.
\begin{theorem}\label{C1}
	Let $(E,\ol\pa_E,K)$ be a stable Hermitian holomorphic vector bundle on $\mathbb{C}$ with $|F_K|\in L^1(\mathbb{C})$ and $H_1, H_2$ be two $\HE$ metrics compatible with $K$, then we have $H_1=H_2$.
\end{theorem}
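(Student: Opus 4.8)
The plan is to run the argument of the alternative proof of Theorem~\ref{thm3} on $\mathbb{C}$, with Simpson's assumptions (Proposition~\ref{assumptions}) replaced by the corresponding elementary facts for the flat plane. Three inputs are needed: a version of Theorem~\ref{thm4} over $\mathbb{C}$, namely that stability of $(E,\ol\pa_E,K)$ forces stability of $(E,\ol\pa_E,H_i)$ for each $\HE$ metric $H_i$; the Bochner identity $(\ref{difference2})$; and a Liouville property forcing bounded subharmonic functions on $\mathbb{C}$ to be constant.

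First I would check that Theorem~\ref{thm4} has a literal analogue here. The plane $\mathbb{C}$ is K\"ahler and carries the nonnegative exhaustion function $\phi(z)=\log(1+|z|^2)$ with $\Delta_g\phi\in L^1(\mathbb{C})$, so Proposition~\ref{L2Stokes} applies unchanged; and since $\dim_{\mathbb{C}}\mathbb{C}=1$, every saturated subsheaf is an honest holomorphic subbundle, so the singular sets $Z(\mathcal{S})$ never enter. Because $H_i$ is $\HE$ and $\det H_i=\det K$ forces $\tr\Lambda F_{H_i}=\tr\Lambda F_K$, one gets $\Lambda F_{H_i}=\tfrac{\tr\Lambda F_K}{\rank(E)}\id_E$, so $|\Lambda F_{H_i}|_{H_i}\in L^1(\mathbb{C})$ follows from $|F_K|\in L^1(\mathbb{C})$; moreover compatibility with $K$ is symmetric (a metric bounded above with fixed determinant is also bounded below), so $H_i$ and $K$ are compatible with each other. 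These are exactly the hypotheses of Lemmas~\ref{keylemma1} and~\ref{keylemma2}, so the proof of Theorem~\ref{thm4} goes through verbatim, comparing $\deg(\mathcal{S},K)$ with $\deg(\mathcal{S},H_i)$ through the $L^2$-Stokes lemma and giving the stability of each $(E,\ol\pa_E,H_i)$.

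Next, since $\mathbb{C}$ is K\"ahler one has $\Delta_g=2\sqrt{-1}\Lambda\pa\ol\pa$ on functions, so the computation $(\ref{difference2})$ yields
\[
\Delta_g\tr(H_1^{-1}H_2)=\bigl|(H_2^{-1}H_1)^{-\frac{1}{2}}\pa_{H_1}(H_1^{-1}H_2)\bigr|^2_{H_1}\geq0.
\]
As $H_1,H_2$ are compatible with $K$, hence with each other, $H_1^{-1}H_2$ and its inverse are bounded, so $\tr(H_1^{-1}H_2)$ is a bounded nonnegative subharmonic function on $\mathbb{C}$; since $\mathbb{C}$ with the flat metric is parabolic, it is constant. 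Then $\pa_{H_1}(H_1^{-1}H_2)=0$, and since $H_1^{-1}H_2$ is self-adjoint with respect to both $H_1$ and $H_2$ also $\ol\pa_E(H_1^{-1}H_2)=0$, so $H_1^{-1}H_2$ is $D_{H_1}$-parallel. Its constant eigenvalues $c_1,\dots,c_m$ give an eigen-splitting $E=\bigoplus_{i=1}^m E_i$ which is holomorphic, $H_1$-orthogonal and $H_2$-orthogonal, with $H_2|_{E_i}=c_iH_1|_{E_i}$. If $m\geq2$, each $E_i$ is a proper holomorphic subbundle, and degree additivity together with the stability of $(E,\ol\pa_E,H_1)$ forces the quantity $\tfrac{\deg(E,H_1)}{\rank(E)}$ to be strictly less than itself, exactly as at the end of the alternative proof of Theorem~\ref{thm3}. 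Hence $m=1$, $H_2=c_1H_1$, and $\det H_1=\det H_2$ forces $c_1=1$, that is, $H_1=H_2$.

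The main obstacle is ensuring the degree-comparison machinery survives on the infinite-volume space $\mathbb{C}$. Everything rests on the fact that $\mathbb{C}$ is parabolic and admits an exhaustion function with $L^1$ Laplacian, so that both Proposition~\ref{L2Stokes} and the Liouville property remain available, and on the $\HE$ equation (through $\det H_i=\det K$) upgrading $|F_K|\in L^1(\mathbb{C})$ to $|\Lambda F_{H_i}|_{H_i}\in L^1(\mathbb{C})$, which is the integrability input Lemma~\ref{keylemma2} demands. Once these points are verified, no essential change to the arguments of Section~4 is needed.
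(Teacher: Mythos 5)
Your proposal is correct and follows essentially the same route as the paper: run the Section~4 arguments (the analogue of Theorem~\ref{thm4} giving stability of $(E,\ol\pa_E,H_i)$, the identity $(\ref{difference2})$, and the Liouville theorem for bounded subharmonic functions on the parabolic plane) after securing an $L^2$-Stokes statement on $\mathbb{C}$. The only cosmetic difference is that you obtain the Stokes step by feeding the exhaustion function $\log(1+|z|^2)$ into Proposition~\ref{L2Stokes}, whereas the paper redoes it by hand with radial cut-offs $\eta_R$ satisfying $|d\eta_R|\leq CR^{-1}$; these are two implementations of the same estimate.
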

\begin{proof}
Tracking the proof, it is not hard to conclude Theorems \ref{thm3}, \ref{thm4} for the case where the base space is $\mathbb{C}$. In fact, instead of $(\ref{difference1})$ in Lemma \ref{keylemma2}, we note
	\begin{equation}\begin{split}
			&|\lim\limits_{R\rightarrow\infty}\int_{B_{2R}}
			\eta_R^2\tr\sqrt{-1}\ol\pa_{S}\left((H_1|_S^{-1}K|_S)^{-1}\pa_{K|_S}(H_1|_S^{-1}K|_S)\right)|
			\\&=|\lim\limits_{R\rightarrow\infty}\int_{B_{2R}}
			\eta_R^2d\sqrt{-1}\left(\tr((H_1|_S^{-1}K|_S)^{-1}\pa_{K|_S}(H_1|_S^{-1}K|_S))\right)|
			\\&=2|\lim\limits_{R\rightarrow\infty}\int_{B_{2R}\setminus B_R}
			\eta_Rd\eta_R\wedge\sqrt{-1}\left(\tr((H_1|_S^{-1}K|_S)^{-1}\pa_{K|_S}(H_1|_S^{-1}K|_S))\right)|
			\\&\leq2\lim\limits_{R\rightarrow\infty}\vol(B_{2R}\setminus B_R)|d\eta_R|^2
			||\tr((H_1|_S^{-1}K|_S)^{-1}\pa_{K|_S}(H_1|_S^{-1}K|_S))||_{L^2(B_{2R}\setminus B_R)}
			\\&\leq2C^2\lim\limits_{R\rightarrow\infty}R^{-2}\vol(B_{2R}\setminus B_R)
			||\tr((H_1|_S^{-1}K|_S)^{-1}\pa_{K|_S}(H_1|_S^{-1}K|_S))||_{L^2(B_{2R}\setminus B_R)}
			\\&=0,
	\end{split}\end{equation}
where $\{\eta_R\in C_0^\infty(B_{2R})\}_{R\in\mathbb{N}}$ is a sequence of cut-off functions such that $0\leq\eta_R\leq1$, $\eta_R=1$ on $B_R$, and $|d\eta_R|\leq CR^{-1}$ for constant $C$. Therefore it holds
\begin{equation}\begin{split}
		\lim\limits_{R\rightarrow\infty}\int_{B_{2R}}
		\eta_R^2\tr\sqrt{-1}F_{H_2|_S}=\deg(S,H_1).
\end{split}\end{equation}
Moreover, we also need to use the Liouville theorem for bounded sub-harmonic functions.
\end{proof}
\begin{theorem}\label{CM1}
	Let $(E,\ol\pa_E,K)$ be a stable Hermitian holomorphic vector bundle on $\mathbb{C}\times M$ for a compact K\"{a}hler manifold $M$ with $|F_K|\in L^1(\mathbb{C}\times M)$ and $H_1, H_2$ be two $\HE$ metrics compatible with $K$, then we have $H_1=H_2$.
\end{theorem}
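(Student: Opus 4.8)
The plan is to run the very same strategy as in the alternative proof of Theorem \ref{thm3}, adapted to the base $\mathbb{C}\times M$ exactly as in the proof of Theorem \ref{C1}: first upgrade ``$(E,\ol\pa_E,K)$ stable'' to ``$(E,\ol\pa_E,H_i)$ stable'' (the analogue of Theorem \ref{thm4}), then use a Bochner identity together with a Liouville theorem on $\mathbb{C}\times M$ to force $H_1^{-1}H_2$ to be parallel, and finally invoke stability to rule out a nontrivial eigen-splitting. Before anything, I would record the elementary facts that are needed throughout: since $\det(K^{-1}H_i)=1$ with $|K^{-1}H_i|\in L^\infty$, both $K^{-1}H_i$ and their inverses are bounded, so $|H_1^{-1}H_2|\in L^\infty$ and, using $|\ol\pa_E(K^{-1}H_i)|\in L^2$, also $|\ol\pa_E(H_1^{-1}H_2)|\in L^2$; moreover $\det(K^{-1}H_i)=1$ gives $\tr\sqrt{-1}\Lambda F_{H_i}=\tr\sqrt{-1}\Lambda F_K$, and since $\sqrt{-1}\Lambda F_{H_i}^\perp=0$ we get $|\Lambda F_{H_i}|_{H_i}\le\rank(E)^{-1/2}|\tr\sqrt{-1}\Lambda F_K|\in L^1(\mathbb{C}\times M)$. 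In particular $H_1,H_2$ are mutually compatible and all the $L^1$/$L^2$ hypotheses of the lemmas below are met.

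The only places where $\mathbb{C}\times M$ is not literally covered by the Zariski-open model are the $L^2$-Stokes identity $(\ref{difference1})$ inside Lemma \ref{keylemma2} and the Liouville property used at the end of Theorem \ref{thm3}; in both I would replace Simpson's exhaustion function (which does not have integrable Laplacian here, $\mathbb{C}\times M$ having infinite volume) by an explicit cut-off argument. Choose $\eta_R$ on $\mathbb{C}\times M$ depending only on the $\mathbb{C}$-radius, with $\eta_R\equiv1$ on $B_R$, $\eta_R\equiv0$ off $B_{2R}$ and $|d\eta_R|\le CR^{-1}$. Since $B_{2R}\subset B_{2R}^{\mathbb{C}}\times M$ one has $\vol(B_{2R}\setminus B_R)\le C'R^2\vol(M)$, so $|d\eta_R|^2\,\vol(B_{2R}\setminus B_R)^{1/2}\le C''R^{-1}\to0$; together with the $L^2$-integrability of $\tr\bigl((H_1|_S^{-1}K|_S)^{-1}\pa_{K|_S}(H_1|_S^{-1}K|_S)\bigr)\wedge\omega_g^{\dim_{\mathbb{C}}(\mathbb{C}\times M)-1}$ this kills the boundary term just as in Theorem \ref{C1}, yielding $\lim_{R\to\infty}\int_{B_{2R}}\eta_R^2\,\tr\sqrt{-1}\Lambda F_{H_2|_S}\,\dvol_g=\deg(S,H_1)$ and the equivalence $|\ol\pa_E\pi_{H_1}|_{H_1}\in L^2\Leftrightarrow|\ol\pa_E\pi_{H_2}|_{H_2}\in L^2$. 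With this in hand the proof of Theorem \ref{thm4} transfers verbatim: $\deg(E,H_1)=\deg(E,K)$ is finite, and for any nontrivial saturated sub-sheaf $\mathcal{S}$ either $|\ol\pa_E\pi_{H_1}|_{H_1}\notin L^2$ and the slope is $-\infty$, or $\deg(\mathcal{S},H_1)/\rank(\mathcal{S})=\deg(\mathcal{S},K)/\rank(\mathcal{S})<\deg(E,K)/\rank(E)=\deg(E,H_1)/\rank(E)$. Hence $(E,\ol\pa_E,H_1)$, and symmetrically $(E,\ol\pa_E,H_2)$, are stable.

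Finally I would compute, as in $(\ref{difference2})$, that $\Delta_g\tr(H_1^{-1}H_2)=|(H_2^{-1}H_1)^{-1/2}\pa_{H_1}(H_1^{-1}H_2)|^2_{H_1}\ge0$ (using that $\sqrt{-1}\Lambda(F_{H_1}-F_{H_2})=0$ since $\det H_1=\det H_2$), and note $\tr(H_1^{-1}H_2)$ is bounded by the remarks above. The key analytic input is the Liouville theorem: a bounded subharmonic function $f$ on $\mathbb{C}\times M$ is constant. I would prove this by the cut-off estimate --- after shifting $f\ge0$, testing $\Delta_g f\ge0$ against $\eta_R^2 f$ gives $\int\eta_R^2|\nabla f|^2\le4\int f^2|d\eta_R|^2\le C$, so $\nabla f\in L^2(\mathbb{C}\times M)$; feeding this back, the cross term $2\int\eta_R f\,d\eta_R\!\cdot\!\nabla f$ is $O\bigl((\int_{B_{2R}\setminus B_R}|\nabla f|^2)^{1/2}\bigr)\to0$, forcing $\int_{\mathbb{C}\times M}|\nabla f|^2=0$. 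Applied to $f=\tr(H_1^{-1}H_2)$ this gives $\pa_{H_1}(H_1^{-1}H_2)=0$, hence $H_1^{-1}H_2$ is $D_{H_1}$-parallel with constant eigenvalues, and its eigenbundle decomposition $E=\oplus_i E_i$ splits holomorphically and $H_1$-, $H_2$-orthogonally exactly as in Theorem \ref{thm3}. If there were $\ge2$ eigenvalues, the slope identity $\deg(E,H_1)/\rank(E)=\sum_i(\deg(E_i,H_1)/\rank(E_i))(\rank(E_i)/\rank(E))$ would contradict the stability of $(E,\ol\pa_E,H_1)$ just established; so there is a single eigenvalue $c$, i.e.\ $H_2=cH_1$, and $\det H_1=\det H_2$ forces $c=1$. \textbf{The main obstacle} is the global analysis on the infinite-volume manifold $\mathbb{C}\times M$: one has to make sure the parabolic-type behaviour of $\mathbb{C}\times M$ (for compact $M$) lets both the $L^2$-Stokes identity and the Liouville theorem survive, arranging the cut-off estimates so that the $L^2$-integrability of the relevant one-forms and gradients compensates for the unbounded volume --- everything else is a faithful transcription of Theorems \ref{thm3}, \ref{thm4} and \ref{C1}.
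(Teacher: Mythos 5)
Your proposal is correct and follows exactly the paper's own route: the paper proves Theorem \ref{CM1} by repeating the proof of Theorem \ref{C1} (itself a transcription of Theorems \ref{thm3}, \ref{thm4}) with cut-off functions $\eta_{R,M}(z,m)=\eta_R(z)$ pulled back from $\mathbb{C}$ to replace the $L^2$-Stokes identity $(\ref{difference1})$, together with the Liouville theorem for bounded subharmonic functions on $\mathbb{C}\times M$, and your write-up supplies the same two ingredients (your Caccioppoli proof of the Liouville step is a detail the paper merely asserts). The only slip is the parenthetical claim that $\mathbb{C}\times M$ admits no exhaustion function with integrable Laplacian ($\log(1+|z|^2)$ is one), but this is immaterial since you, like the authors, argue via explicit cut-offs.
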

\begin{proof}
It suffices to modify the proof of Theorem \ref{C1} by replacing the sequence of cut-off functions $\{\eta_r\in C_0^\infty(B_{2R})\}_{R\in\mathbb{N}}$ by $\{\eta_{R,M}\in C_0^\infty(B_{2R}\times M)\}_{R\in\mathbb{N}}$, where $\eta_{R,M}(z,m)\triangleq\eta_R(z)$ and integrating over $B_{2R}\times M$. Moreover, we also note that the Liouville theorem for bounded sub-harmonic functions also holds on $\mathbb{C}\times M$.
	\end{proof}
\begin{theorem}\label{C2}
Let $(E,\ol\pa_E,K)$ be an indecomposable Hermitian holomorphic vector bundle on $\mathbb{C}$ with $|F_K|\leq C(1+|z|^2)^{-p}$ for some $p\in(1,2]$ and positive constant $C$, then 
\begin{center}
	$(E,\ol\pa_E,K)$ is stable $\Leftrightarrow$ it admits a unique $\HE$ metric compatible with $K$.
\end{center}
\end{theorem}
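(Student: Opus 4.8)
The plan is to establish the two implications separately, in each case transplanting to $(\mathbb{C},g_{\mathrm{eucl}})$ the arguments already developed for Theorems \ref{thm4}, \ref{thm5} and \ref{C1}. The only genuinely new ingredient is that Simpson's $L^2$-Stokes lemma (Proposition \ref{L2Stokes}) is unavailable here, since the natural exhaustion function $|z|^2$ has $\Delta_g|z|^2$ constant, hence not integrable, and $\vol(\mathbb{C})=\infty$; it must be replaced throughout by the cut-off computation already carried out in the proof of Theorem \ref{C1}, where the quadratic growth $\vol(B_{2R}\setminus B_R)\sim R^2$ is balanced against cut-off functions with $|d\eta_R|\le CR^{-1}$. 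As a preliminary remark, the decay hypothesis $|F_K|\le C(1+|z|^2)^{-p}$ with $p>1$ gives, in polar coordinates, $\int_{\mathbb{C}}|F_K|\dvol_g\le C'\int_1^\infty r^{1-2p}\,dr<\infty$, so that $|F_K|\in L^1(\mathbb{C})$ and in particular $|\Lambda F_K|_K\in L^1(\mathbb{C})$.

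$(\Rightarrow)$ Assume $(E,\ol\pa_E,K)$ is stable. One checks that $\mathbb{C}$ with the Euclidean metric, together with a bundle whose background curvature satisfies $|F_K|\le C(1+|z|^2)^{-p}$ for $p\in(1,2]$, falls under Mochizuki's \cite[Assumption 2.1]{Mo2020}; his existence theorem then produces a $\HE$ metric $H$ compatible with $K$. That such a metric is unique is precisely Theorem \ref{C1}, whose hypothesis $|F_K|\in L^1(\mathbb{C})$ we verified above. Hence a unique $\HE$ metric compatible with $K$ exists.

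$(\Leftarrow)$ Assume a $\HE$ metric $H$ compatible with $K$ exists. Because $\det H=\det K$, the Einstein factor of $H$ equals that of $K$, so $\sqrt{-1}\Lambda F_H=\frac{\tr\sqrt{-1}\Lambda F_K}{\rank(E)}\id_E$ and therefore $|\Lambda F_H|_H\le|\Lambda F_K|_K\in L^1(\mathbb{C})$; moreover $H$ and $K$ are mutually compatible, both statements following from $\det H=\det K$ together with the given bounds $|K^{-1}H|\in L^\infty(\mathbb{C})$ and $|\ol\pa_E(K^{-1}H)|\in L^2(\mathbb{C})$. As in the proof of Theorem \ref{thm5}(1), a $\HE$ metric decomposes $(E,\ol\pa_E,H)$ into a direct sum of stable Hermitian holomorphic sub-bundles $(E_i,\ol\pa_E|_{E_i},H|_{E_i})$ — the Chern-Weil and Bochner identities behind this splitting remaining valid on $\mathbb{C}$ once the cut-off Stokes argument of Theorem \ref{C1} is used in place of Proposition \ref{L2Stokes} — and the indecomposability of $(E,\ol\pa_E)$ forces this sum to consist of a single summand, so $(E,\ol\pa_E,H)$ is itself stable. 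Finally, repeating the argument of Theorem \ref{thm4} with the roles of $K$ and $H$ interchanged, and invoking the $\mathbb{C}$-version of Lemma \ref{keylemma2} furnished by the proof of Theorem \ref{C1} (legitimate since $|\Lambda F_K|_K,|\Lambda F_H|_H\in L^1(\mathbb{C})$), one transfers the stability of $(E,\ol\pa_E,H)$ to $(E,\ol\pa_E,K)$. Combined with $(\Rightarrow)$ this yields the stated equivalence.

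The hard part will be the adaptation of the Stokes-type vanishing statements to $(\mathbb{C},g_{\mathrm{eucl}})$: none of the compactifications, finite-volume hypotheses, or exhaustion functions with integrable Laplacian that underlie Theorems \ref{thm1}--\ref{thm5} and Simpson's framework are available, so the identities $\lim_{R\to\infty}\int_{B_{2R}}d(\eta_R^2\,\eta)=0$ for the relevant trace forms must be re-derived from the quadratic volume growth of $\mathbb{C}$ together with the global $L^2$-integrability coming from the mutual compatibility of $H$ and $K$ and from $|F_K|\in L^1(\mathbb{C})$ — which is exactly what the condition $p>1$ secures. A subsidiary point, handled by citing \cite{Mo2020}, is to confirm that the curvature decay with exponent $p\le 2$ places $(E,\ol\pa_E,K)$ within the range of Mochizuki's existence theorem.
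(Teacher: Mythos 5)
Your proposal is correct and follows essentially the same route as the paper: existence from Mochizuki's theorem after verifying his Assumption 2.1 for $\mathbb{C}$, uniqueness from Theorem \ref{C1}, and the converse (existence of a $\HE$ metric implies stability) by transplanting Theorem \ref{thm5}-(1) to $\mathbb{C}$ with the cut-off Stokes argument of Theorem \ref{C1} replacing Proposition \ref{L2Stokes}. The paper's own proof is just a two-sentence citation of these same ingredients (including \cite[Corollary 3.2]{WZ2023} for the verification of Assumption 2.1), so your write-up is simply a more detailed version of the intended argument.
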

\begin{proof}
Similar as Theorem \ref{C1}, Theorem \ref{thm5}-(1) also holds for the case where the base space is $\mathbb{C}$. By \cite[Theorem 1.2]{Mo2020}, it suffices to verify that $\mathbb{C}$ must satisfy \cite[Assumption 2.1]{Mo2020} with respect some suitable underlying metrics, readers may consult \cite[Corollary 3.2]{WZ2023}.
\end{proof}
\begin{theorem}\label{thm71}
	Let $(M,J,g)$ be a compact Gauduchon manifold and $\Sigma\subset M$ be a smooth divisor. Suppose $(E,\ol\pa_E,K)$ is a stable Hermitian holomorphic vector bundle on $M\setminus\Sigma$ with $|\Lambda F_K|\in L^\infty(M\setminus\Sigma)$ and $H_1, H_2$ are two $\HE$ metrics compatible with $K$, then we have $H_1=H_2$.
\end{theorem}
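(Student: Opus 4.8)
The plan is to follow the two–step route of the alternative proof of Theorem \ref{thm3}: first promote the stability of $(E,\ol\pa_E,K)$ to the stability of each $\HE$ metric (the analogue of Theorem \ref{thm4}), and then show that two $\HE$ metrics with $\det H_i=\det K$ must coincide. The only new geometric input is an exhaustion function fitting the $L^2$-Stokes lemma. Since $\Sigma$ is a smooth divisor, let $(\mathcal{O}_M(\Sigma),\rho)$ be the associated holomorphic line bundle with a smooth Hermitian metric and $s$ its canonical holomorphic section, vanishing transversally on $\Sigma$. Then $\phi:=-\log\|s\|_\rho^2+C$ is, for $C$ large enough, a nonnegative exhaustion function on $M\setminus\Sigma$ whose $\sqrt{-1}\pa\ol\pa\phi=-\sqrt{-1}\pa\ol\pa\log\|s\|_\rho^2$ is the restriction of the smooth Chern curvature of $(\mathcal{O}_M(\Sigma),\rho)$; in particular $\sqrt{-1}\Lambda\pa\ol\pa\phi\in L^\infty(M\setminus\Sigma)\subset L^1(M\setminus\Sigma)$. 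Because $M$ is compact and $g$ extends smoothly, $M\setminus\Sigma$ has finite volume, and the Gauduchon condition is exactly $\sqrt{-1}\pa\ol\pa\omega_g^{\dim_{\mathbb{C}}M-1}=0\ge0$. Hence Remark \ref{nonKahlerL2Stokes} supplies the $L^2$-Stokes lemma (Proposition \ref{L2Stokes}) on $M\setminus\Sigma$.

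With this, the first step is to establish the analogue of Theorem \ref{thm4} on $(M\setminus\Sigma,g)$. Lemma \ref{keylemma1} is purely algebraic and unchanged. In Lemma \ref{keylemma2} the Kähler hypothesis entered only through the integration by parts, which is now provided by the $L^2$-Stokes lemma above together with the bidegree vanishing $\alpha\wedge d\omega_g^{\dim_{\mathbb{C}}M-1}=0$ for $(1,0)$-forms $\alpha$; the $L^2$-bounds on the forms being integrated by parts follow, exactly as in the compact-Kähler case, from the compatibility conditions (which in particular force $|K^{-1}H_i|^{-1}\in L^\infty$), from $|\Lambda F_K|\in L^\infty$, and from $\vol(M\setminus\Sigma)<\infty$; note also that $\Lambda F_H=\frac{\tr\Lambda F_K}{\rank(E)}\id_E$ is bounded for any $\HE$ metric $H$ compatible with $K$, so $|\Lambda F_H|_H\in L^1$. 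The Chern--Weil formula $(\ref{ChernWeilformula})$ and the definition of $\deg(\mathcal{S},K)$ are already stated in the Hermitian generality and remain finite and well posed here by $\pa\ol\pa\omega_g^{\dim_{\mathbb{C}}M-1}=0$ together with $|\Lambda F_K|\in L^\infty$ and finite volume. Running the argument of Theorem \ref{thm4} verbatim then yields that $(E,\ol\pa_E,H_1)$ and $(E,\ol\pa_E,H_2)$ are stable.

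The second step reproduces the alternative proof of Theorem \ref{thm3}. The pointwise identity $(\ref{difference2})$ holds on any Hermitian manifold, and since $\det H_1=\det K=\det H_2$ one has $\det(H_1^{-1}H_2)=1$, hence $\tr\sqrt{-1}\Lambda F_{H_1}=\tr\sqrt{-1}\Lambda F_{H_2}$, and both metrics being $\HE$, $\sqrt{-1}\Lambda F_{H_1}=\sqrt{-1}\Lambda F_{H_2}$; therefore
\[
\sqrt{-1}\Lambda\pa\ol\pa\tr(H_1^{-1}H_2)=\tfrac12\bigl|(H_2^{-1}H_1)^{-\frac12}\pa_{H_1}(H_1^{-1}H_2)\bigr|^2_{H_1}\ge0 .
\]
The function $f:=\tr(H_1^{-1}H_2)$ is bounded on $M\setminus\Sigma$ by compatibility, and the right side is in $L^1(M\setminus\Sigma)$ since it is controlled by $|\ol\pa_E(H_1^{-1}H_2)|^2\in L^1$. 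In place of Simpson's Liouville property (Proposition \ref{assumptions}(3)) I would argue by integration: $\sqrt{-1}\Lambda\pa\ol\pa f\,\dvol_g$ is a constant multiple of $\sqrt{-1}\pa\ol\pa f\wedge\omega_g^{\dim_{\mathbb{C}}M-1}$, and integrating by parts twice and using $\pa\ol\pa\omega_g^{\dim_{\mathbb{C}}M-1}=0$ (plus bidegree vanishing) gives $\int_{M_j}\sqrt{-1}\Lambda\pa\ol\pa f\,\dvol_g=\int_{M_j}d\eta$, for a suitable exhaustion $\{M_j\}$ of $M\setminus\Sigma$, where $\eta$ is a fixed $(\dim_{\mathbb{R}}M-1)$-form assembled from $\ol\pa f$, $f$, $\omega_g^{\dim_{\mathbb{C}}M-1}$ and $\pa\omega_g^{\dim_{\mathbb{C}}M-1}$. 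Since $|\ol\pa f|\in L^2$ (a consequence of $|\ol\pa_E(H_1^{-1}H_2)|\in L^2$) and $f$ is bounded with $\vol(M\setminus\Sigma)<\infty$, one has $|\eta|^2\in L^1$, so Proposition \ref{L2Stokes} gives $\int_{M\setminus\Sigma}\sqrt{-1}\Lambda\pa\ol\pa f\,\dvol_g=\lim_{j\to\infty}\int_{M_j}d\eta=0$. The nonnegative continuous integrand therefore vanishes identically, so $\pa_{H_1}(H_1^{-1}H_2)=0$ and, taking $H_1$-adjoints, $\ol\pa_E(H_1^{-1}H_2)=0$; thus $H_1^{-1}H_2$ is $D_{H_1}$-parallel. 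The eigendecomposition $E=\bigoplus_i E_i$ of $H_1^{-1}H_2$ then has constant eigenvalues and splits $E$ into $H_1$-orthogonal holomorphic sub-bundles; if there were two distinct eigenvalues, averaging the slopes $\deg(E_i,H_1)/\rank(E_i)$ against the ratios $\rank(E_i)/\rank(E)$ would contradict the stability of $(E,\ol\pa_E,H_1)$ just established. Hence there is a single eigenvalue, which equals $1$ because $\det(H_1^{-1}H_2)=1$, and $H_1=H_2$.

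The only real work is bookkeeping: one must check that every integration by parts occurring in the transcribed proofs of Lemma \ref{keylemma2}, Theorem \ref{thm4} and Theorem \ref{thm3} meets the hypotheses of the non-Kähler $L^2$-Stokes lemma, i.e. that the pertinent $(\dim_{\mathbb{R}}M-1)$-forms have $L^2$ norm in $L^1(M\setminus\Sigma)$; all these reductions succeed because the metrics involved are mutually bounded with $L^2$ logarithmic derivatives, $g$ extends smoothly to $M$, and $M\setminus\Sigma$ has finite volume. A related point to state carefully is the finiteness and well-posedness of $\deg(\mathcal{S},K)$ and $\deg(E,H_i)$ on the compact Gauduchon manifold $(M\setminus\Sigma,g)$, which again uses $\pa\ol\pa\omega_g^{\dim_{\mathbb{C}}M-1}=0$ together with the boundedness of $\Lambda F_K$ and the finite volume. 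With these checks in place, no further modification of the compact-Kähler, Zariski-closed argument is needed.
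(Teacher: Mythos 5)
Your overall architecture is the paper's: verify Simpson-type assumptions (1), (2) for the complement of a smooth divisor (your exhaustion $\phi=-\log\|s\|_\rho^2+C$ is the standard choice), transplant Lemma \ref{keylemma2} and the proofs of Theorems \ref{thm4}, \ref{thm3} using the non-K\"ahler $L^2$-Stokes lemma of Remark \ref{nonKahlerL2Stokes}, and conclude via the stability of $(E,\ol\pa_E,H_i)$. Replacing Simpson's Liouville property by a direct Stokes integration of $(\ref{difference2})$ in the last step is a harmless (and self-contained) variation. However, there is one concrete error at the step where all the real work of the non-K\"ahler adaptation sits: the claimed ``bidegree vanishing $\alpha\wedge d\omega_g^{\dim_{\mathbb{C}}M-1}=0$ for $(1,0)$-forms $\alpha$'' is false. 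For a $(1,0)$-form $\alpha$ one has $\alpha\wedge\pa\omega_g^{n-1}=0$ by type, but $\alpha\wedge\ol\pa\omega_g^{n-1}$ is of type $(n,n)$ and does not vanish in general. Consequently, in your adaptation of $(\ref{difference1})$ the term
\begin{equation*}
\tr\bigl((H_1|_S^{-1}K|_S)^{-1}\pa_{K|_S}(H_1|_S^{-1}K|_S)\bigr)\wedge\ol\pa\omega_g^{\dim_{\mathbb{C}}M-1}
\end{equation*}
survives, and your argument gives no reason for its integral to tend to zero; if your bidegree claim were true, the Gauduchon condition would play no role in this step at all, which cannot be right.

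The gap is repairable, and the repair is exactly the one displayed computation the paper provides for this theorem: since $\tr(h_S^{-1}\pa_{K|_S}h_S)=\pa\tr\log h_S$ with $\tr\log h_S$ bounded (by compatibility), the surviving term equals $d\bigl(\tr\log h_S\cdot\ol\pa\omega_g^{n-1}\bigr)$ after using $\pa\ol\pa\omega_g^{n-1}=0$, and the $L^2$-Stokes lemma applies to this second exact form as well (its pointwise norm is bounded since $g$ and $\ol\pa\omega_g^{n-1}$ extend smoothly to $M$ and the volume is finite). You carry out precisely this two-term manipulation correctly for the scalar function $f=\tr(H_1^{-1}H_2)$ in your final step, so the fix is consistent with what you already do elsewhere; but as written, the justification of the degree identity in Lemma \ref{keylemma2} — and hence of the stability of $(E,\ol\pa_E,H_i)$ in the Gauduchon setting — is incomplete.
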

\begin{theorem}\label{thm72}
	Let $(M,J,g)$ be a compact Gauduchon manifold and $\Sigma\subset M$ be a smooth divisor. Suppose $(E,\ol\pa_E,K)$ is an indecomposable Hermitian holomorphic vector bundle on $M\setminus\Sigma$ with $|\Lambda F_K|\in L^\infty(M\setminus\Sigma)$, then 
	\begin{center}
		$(E,\ol\pa_E,K)$ is stable $\Leftrightarrow$ it admits a unique $\HE$ metric compatible with $K$.
		\end{center}
\end{theorem}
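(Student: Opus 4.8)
The plan is to establish the two implications separately. Put $n=\dim_{\mathbb C}M$. As $g$ is a Gauduchon metric on the compact manifold $M$, its restriction to $M\setminus\Sigma$ is equivalent to a Riemannian metric on $M$ and satisfies $\sqrt{-1}\pa\ol\pa\omega_g^{n-1}=0\geq 0$; moreover $|\Lambda F_K|\in L^\infty(M\setminus\Sigma)$ forces $|\Lambda F_K^\perp|\leq 2|\Lambda F_K|\in L^\infty(M\setminus\Sigma)$, and finiteness of $\vol(M\setminus\Sigma)=\vol(M)$ turns every such bound into an $L^1$ bound. Hence for the direction ``$(E,\ol\pa_E,K)$ stable $\Rightarrow$ it admits a unique $\HE$ metric compatible with $K$'', Theorem \ref{thm1} applies verbatim and produces a $\HE$ metric compatible with $K$, and its uniqueness is precisely the content of Theorem \ref{thm71}.

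For the converse, suppose a $\HE$ metric $H$ compatible with $K$ exists. Following the proof of Theorem \ref{thm5}-(1), the splitting theory of $\HE$ bundles (see \cite{Ko1982,Lu1983}) presents $(E,\ol\pa_E,H)$ as an $H$-orthogonal holomorphic direct sum of stable subbundles of equal slope: on the noncompact base this is read off from the $\HE$ identity $\deg(\mathcal S,H)/\rank\mathcal S=\deg(E,H)/\rank E-(\rank\mathcal S)^{-1}\int_{M\setminus(\Sigma\cup Z(\mathcal S))}|\ol\pa_E\pi_H|_H^2\dvol_g$, which forces any equal-slope saturated subsheaf to satisfy $\ol\pa_E\pi_H=0$, hence to be a $D_H$-parallel holomorphic subbundle that splits off a direct summand (its projection extends holomorphically over the codimension-$\geq 2$ set $Z(\mathcal S)$ by Riemann's extension theorem). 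Indecomposability of $(E,\ol\pa_E)$ then makes this sum trivial, so $(E,\ol\pa_E,H)$ is stable.

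It remains to deduce stability of $(E,\ol\pa_E,K)$ from that of $(E,\ol\pa_E,H)$, for which I would run the argument of Theorem \ref{thm4} with the roles of $K$ and $H$ interchanged. Since $\det K=\det H$ one has $\tr\sqrt{-1}\Lambda F_H=\tr\sqrt{-1}\Lambda F_K$ pointwise, hence $\deg(E,K)=\deg(E,H)$ and $|\Lambda F_H|_H\leq|\Lambda F_K|\in L^\infty$, and $K$ is compatible with $H$. Fix a nontrivial saturated subsheaf $\mathcal S$. If $|\ol\pa_E\pi_K|_K\notin L^2(M\setminus(\Sigma\cup Z(\mathcal S)))$, then $\deg(\mathcal S,K)=-\infty<\deg(E,K)/\rank E$. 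If $|\ol\pa_E\pi_K|_K\in L^2$, then $\deg(\mathcal S,K)$ is finite, and Lemma \ref{keylemma1} together with the Gauduchon analogue of Lemma \ref{keylemma2} (with $H_1=K$, $H_2=H$) gives $|\ol\pa_E\pi_H|_H\in L^2$ and $\lim_j\int_{X_j}\tr\sqrt{-1}\Lambda F_{H|_S}\dvol_g=\deg(\mathcal S,K)$ along an exhaustion $\{X_j\}$ of $M\setminus(\Sigma\cup Z(\mathcal S))$; as $\tr\sqrt{-1}\Lambda F_{H|_S}=\tr(\pi_H\circ\sqrt{-1}\Lambda F_H)-|\ol\pa_E\pi_H|_H^2$ is then absolutely integrable, this limit equals $\deg(\mathcal S,H)$, so $\deg(\mathcal S,K)=\deg(\mathcal S,H)$. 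Stability of $(E,\ol\pa_E,H)$ now yields $\deg(\mathcal S,K)/\rank\mathcal S=\deg(\mathcal S,H)/\rank\mathcal S<\deg(E,H)/\rank E=\deg(E,K)/\rank E$. Thus $(E,\ol\pa_E,K)$ is stable, and the proof is complete.

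The main obstacle is the Gauduchon analogue of Lemma \ref{keylemma2}, concretely the integration by parts $(\ref{difference1})$ in its proof, where the K\"ahler condition $d\omega_g^{n-1}=0$ was used to write $\tr\sqrt{-1}\ol\pa_S(\cdots)\wedge\tfrac{\omega_g^{n-1}}{(n-1)!}$ as an exact form. Here $d\omega_g^{n-1}\neq 0$; putting $u=\log\det(H_1|_S^{-1}H_2|_S)$, which is bounded because $H_1|_S^{-1}H_2|_S$ and its inverse are bounded by Lemma \ref{keylemma1} and compatibility, and using $\tr\big((H_1|_S^{-1}H_2|_S)^{-1}\pa_{H_1|_S}(H_1|_S^{-1}H_2|_S)\big)=\pa u$, the quantity to control is $\lim_j\int_{N_j}\sqrt{-1}\Lambda\ol\pa\pa u\,\dvol_g$. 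The Gauduchon identity $\pa\ol\pa\omega_g^{n-1}=0$ yields the identity $\pa\ol\pa u\wedge\omega_g^{n-1}=d\big(\ol\pa u\wedge\omega_g^{n-1}+u\,\pa\omega_g^{n-1}\big)$ on $M\setminus\Sigma$, where the $(2n-1)$-form $\ol\pa u\wedge\omega_g^{n-1}+u\,\pa\omega_g^{n-1}$ has $L^2$ coefficients (from $|u|\in L^\infty$ and $|\ol\pa u|\in L^2$ --- the latter coming from compatibility and Lemma \ref{keylemma1} exactly as in the original proof --- together with the boundedness of $\omega_g^{n-1}$ and $\pa\omega_g^{n-1}$). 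Hence the limit vanishes once a non-K\"ahler $L^2$-Stokes lemma is in hand, and this is exactly where the hypothesis that $\Sigma$ is a smooth divisor is used: writing $\Sigma=\{s=0\}$ for a holomorphic section $s$ of $\mathcal O_M(\Sigma)$ with a smooth Hermitian metric $h_\Sigma$ normalized so that $|s|^2_{h_\Sigma}\leq 1$ on $M$, the function $\phi=-\log|s|^2_{h_\Sigma}$ is a proper nonnegative exhaustion of $M\setminus\Sigma$ with $\sqrt{-1}\pa\ol\pa\phi=\sqrt{-1}F_{h_\Sigma}$, so $\sqrt{-1}\Lambda\pa\ol\pa\phi=\sqrt{-1}\Lambda F_{h_\Sigma}\in L^\infty(M\setminus\Sigma)\subset L^1(M\setminus\Sigma)$, while $\sqrt{-1}\pa\ol\pa\omega_g^{n-1}=0\geq 0$; thus Proposition \ref{L2Stokes} applies in the non-K\"ahler form of Remark \ref{nonKahlerL2Stokes}, and this same exhaustion function also powers the Liouville property behind Theorem \ref{thm71}.
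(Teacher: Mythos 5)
Your proposal is correct and follows essentially the same route as the paper: existence from Theorem \ref{thm1}, uniqueness from Theorem \ref{thm71}, and the converse by running the arguments of Theorems \ref{thm5}-(1) and \ref{thm4} with $K$ and $H$ exchanged, where the only new point is the Gauduchon replacement of the integration by parts $(\ref{difference1})$ — your identity $\pa\ol\pa u\wedge\omega_g^{n-1}=d(\ol\pa u\wedge\omega_g^{n-1}+u\,\pa\omega_g^{n-1})$ is exactly the paper's displayed variant, combined with the non-K\"ahler $L^2$-Stokes lemma of Remark \ref{nonKahlerL2Stokes}. Your explicit construction of the exhaustion function $\phi=-\log|s|^2_{h_\Sigma}$ makes precise what the paper only asserts ("clearly satisfies assumptions (1), (2)"), but it is not a departure from the paper's argument.
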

\begin{theorem}\label{thm73}
	Let $(M,J,g)$ be a compact Gauduchon manifold and $\Sigma\subset M$ be a smooth divisor. Suppose $(E,\ol\pa_E,K)$ is a Hermitian holomorphic vector bundle on $M\setminus\Sigma$ with $|\Lambda F_K|\in L^\infty(M\setminus\Sigma)$, then 
	\begin{center}
		$(E,\ol\pa_E,K)$ is semi-stable $\Leftrightarrow$ it admits an $\AHE$ structure compatible with $K$. 
	\end{center}
\end{theorem}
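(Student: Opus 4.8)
The plan is to prove the two implications separately, reusing the machinery already in place. For the direction ``semi-stable $\Rightarrow$ admits an $\AHE$ structure'', I would invoke Theorem \ref{thm2} directly: a compact Gauduchon manifold $(M,J,g)$ with a smooth divisor $\Sigma$ satisfies all of its hypotheses, since $\Sigma$ is a closed subset, $g$ is trivially equivalent to a Riemannian metric on $M$, and the Gauduchon condition gives $\sqrt{-1}\pa\ol\pa\omega_g^{\dim_{\mathbb{C}}M-1}=0\geq0$; as $|\Lambda F_K|\in L^\infty(M\setminus\Sigma)$, Theorem \ref{thm2} produces an $\AHE$ structure compatible with $K$. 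Hence the substantive content is the converse, which I would obtain by adapting the proof of Theorem \ref{thm5}(2) to the present setting.

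The only place in that proof using the geometry of the base is Lemma \ref{keylemma2}, which itself relies solely on the $L^2$-Stokes property of Proposition \ref{L2Stokes}. So the first step is to check, through Remark \ref{nonKahlerL2Stokes}, that $L^2$-Stokes holds on $M\setminus\Sigma$. The volume is finite, $\vol(M\setminus\Sigma,g)=\vol(M,g)<\infty$, and a good exhaustion function exists: fixing a holomorphic section $s$ cutting out $\Sigma$ and a smooth Hermitian metric $\|\cdot\|$ on $\mathcal{O}_M(\Sigma)$ normalized so that $\|s\|^2<e^{-1}$, the function $\phi=\log(-\log\|s\|^2)$ is a nonnegative proper function on $M\setminus\Sigma$; in a local coordinate $z$ transverse to $\Sigma$ one has $-\log\|s\|^2=-\log|z|^2+(\text{smooth})$, so $\pa\ol\pa$ of the singular part vanishes and a direct computation gives $\sqrt{-1}\Lambda\pa\ol\pa\phi=O(|z|^{-2}(\log|z|)^{-2})$ near $\Sigma$, which is integrable against $\dvol_g$. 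Thus $\sqrt{-1}\Lambda\pa\ol\pa\phi\in L^1(M\setminus\Sigma)$ and Remark \ref{nonKahlerL2Stokes} gives the desired Stokes lemma.

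Next I would re-run the proof of Lemma \ref{keylemma2}, the only new feature being that $\omega_g^{\dim_{\mathbb{C}}M-1}$ is not closed. Setting $\psi=\log\det(H_1|_S^{-1}H_2|_S)$, a bounded function because $H_1,H_2$ are compatible with $K$ (whence $|\log(K^{-1}H_i)|\in L^\infty$ and, by Lemma \ref{keylemma1}, $\psi$ is bounded), the difference $\deg(S,H_2)-\deg(S,H_1)$ equals, up to sign and the combinatorial factor, $\lim_j\int_{N_j}\sqrt{-1}\pa\ol\pa\psi\wedge\omega_g^{\dim_{\mathbb{C}}M-1}$; integrating by parts twice, the interior term carrying $\ol\pa\pa\omega_g^{\dim_{\mathbb{C}}M-1}$ vanishes by the Gauduchon identity, while the two remaining terms are of the form $d\eta$ with $|\eta|^2\in L^1(M\setminus\Sigma)$ — by $|\ol\pa_E\pi_{H_1}|\in L^2$, the boundedness of $\psi$, and the smoothness of $\omega_g$ across $\Sigma$ — and hence vanish in the limit. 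This recovers $\deg(S,H_2)=\deg(S,H_1)$ and the $L^2$-equivalence of $|\ol\pa_E\pi_{H_1}|$ and $|\ol\pa_E\pi_{H_2}|$ as in Lemma \ref{keylemma2}. Feeding this into the argument of Theorem \ref{thm5}(2) — the estimate $(\ref{thm31})$, which uses only the Chern-Weil formula, the bound $|\Lambda F_{H_\epsilon}^\perp|_{H_\epsilon}\leq\epsilon$, and $\det H_\epsilon=\det K$; the dichotomy on whether $|\ol\pa_E\pi_K|_K\in L^2$; and letting $\epsilon\to0$ in $(\ref{thm35})$ — yields $\frac{\deg(\mathcal{S},K)}{\rank(\mathcal{S})}\leq\frac{\deg(E,K)}{\rank(E)}$ for every nontrivial saturated sub-sheaf $\mathcal{S}$, i.e.\ semi-stability of $(E,\ol\pa_E,K)$.

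I expect the main obstacle to be the bookkeeping in the adaptation of Lemma \ref{keylemma2}: every integration by parts on $M\setminus\Sigma$ must be massaged into an exact form $d\eta$ with $|\eta|^2\in L^1$ before Remark \ref{nonKahlerL2Stokes} can be applied, which forces careful use of the compatibility conditions and of the smoothness of $g$ on all of $M$. The failure of $\omega_g^{\dim_{\mathbb{C}}M-1}$ to be closed costs only one extra integration by parts, harmless because it produces a term involving $\pa\ol\pa\omega_g^{\dim_{\mathbb{C}}M-1}=0$; everything else is formally identical to the K\"ahler arguments of Sections 3 and 4.
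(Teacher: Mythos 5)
Your proposal is correct and follows essentially the same route as the paper: the existence direction is quoted from Theorem \ref{thm2}, and the converse re-runs Theorem \ref{thm5}(2) after upgrading Lemma \ref{keylemma2} via the non-K\"ahler Stokes lemma of Remark \ref{nonKahlerL2Stokes} together with the extra integration by parts that uses $\pa\ol\pa\omega_g^{\dim_{\mathbb{C}}M-1}=0$, which is exactly the displayed variant of $(\ref{difference1})$ with which the paper closes Section 4. Your explicit construction of the exhaustion function $\log(-\log\|s\|^2)$ and the verification that $\sqrt{-1}\Lambda\pa\ol\pa\phi\in L^1$ merely make precise a step the paper dismisses as ``clearly satisfies'' assumptions (1), (2) of Proposition \ref{assumptions}.
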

In Theorems \ref{thm71}, \ref{thm72}, \ref{thm73}, $M\setminus\Sigma$ clearly satisfies assumptions (1), (2) in Proposition \ref{assumptions} as long as $\Delta_g$ is replaced by $\sqrt{-1}\Lambda\pa\ol\pa$. The existence part follows from Theorems \ref{thm1}, \ref{thm2}. As for the remaining part, the previous discussion in K\"{a}hler context also works if one uses the non-K\"{a}hler analogue of Proposition \ref{L2Stokes} (see Remark \ref{nonKahlerL2Stokes}) and the following variant of $(\ref{difference1})$:
\begin{equation}\begin{split}
		&\lim\limits_{j\rightarrow\infty}\int_{N_j}
		\tr\sqrt{-1}\Lambda\ol\pa_{S}\left((H_1|_S^{-1}K|_S)^{-1}\pa_{K|_S}(H_1|_S^{-1}K|_S)\right)\dvol_g
		\\&=\lim\limits_{j\rightarrow\infty}\int_{N_j}
		d\sqrt{-1}\left(\tr((H_1|_S^{-1}K|_S)^{-1}\pa_{K|_S}(H_1|_S^{-1}K|_S))\wedge\frac{\omega_g^{\dim_{\mathbb{C}}N-1}}{(\dim_{\mathbb{C}}N-1)!}\right)
		\\&+\lim\limits_{j\rightarrow\infty}\int_{N_j}
		\sqrt{-1}\left(\tr(\pa_{K|_S}\log(H_1|_S^{-1}K|_S))\wedge\frac{\ol\pa\omega_g^{\dim_{\mathbb{C}}N-1}}{(\dim_{\mathbb{C}}N-1)!}\right)
		\\&=\lim\limits_{j\rightarrow\infty}\int_{N_j}
		d\sqrt{-1}\left(\tr((H_1|_S^{-1}K|_S)^{-1}\pa_{K|_S}(H_1|_S^{-1}K|_S))\wedge\frac{\omega_g^{\dim_{\mathbb{C}}N-1}}{(\dim_{\mathbb{C}}N-1)!}\right)
		\\&+\lim\limits_{j\rightarrow\infty}\int_{N_j}
		d\sqrt{-1}\left(\tr\log(H_1|_S^{-1}K|_S)\wedge\frac{\ol\pa\omega_g^{\dim_{\mathbb{C}}N-1}}{(\dim_{\mathbb{C}}N-1)!}\right)
		\\&=0.
\end{split}\end{equation}
\section*{Acknowledgement}
The first author would like to thank Doctor Chao Li for several useful discussion, to Professor Takuro Mochizuki for insightful discussions and comments.

\end{document}